\numberwithin{equation}{section}
\titleformat*{\section}{\Large \scshape\center}
\titleformat*{\subsection}{\fontsize{14}{14} \sffamily}
\theoremstyle{plain}
\newtheorem{theorem}{Theorem}[section]
\newtheorem*{theorem*}{Theorem}
\newtheorem{lemma}[theorem]{Lemma}
\newtheorem{proposition}[theorem]{Proposition}
\newtheorem{corollary}[theorem]{Corollary}
\theoremstyle{definition}
\newtheorem*{definition*}{Definition}
\newtheorem{example}[theorem]{Example}
\theoremstyle{remark}
\newtheorem{remark}{Remark}
\newtheorem{question}{Question}
\DeclareMathOperator{\tr}{tr}
\DeclareMathOperator{\re}{Re}
\DeclareMathOperator{\im}{Im}
\begin{document}
\pagenumbering{gobble}
\title{Operators in the Fock-Toeplitz algebra}
\author{Wolfram Bauer\footnote{The work of the first author was partially supported by the organizers of ICSAT - 2023, CUSAT.}, Robert Fulsche and Miguel Angel Rodriguez Rodriguez}
\date{\emph{Dedicated to the memory of Nikolai Vasilevski}}
\maketitle
\pagenumbering{arabic}
\begin{abstract}
    We consider various classes of bounded operators on the Fock space $F^2$ of Gaussian square integrable entire functions over the complex plane. 
These include Toeplitz (type) operators, weighted composition operators, singular integral operators, Volterra-type operators and Hausdorff operators and range from classical 
objects in harmonic analysis to more recently introduced classes. As a leading problem and closely linked to well-known compactness characterizations we 
pursue the question of when these operators are contained in the Toeplitz algebra. This paper combines 
a (certainly in-complete) survey of the classical and more recent literature including new ideas for proofs from the perspective of quantum harmonic analysis (QHA). Moreover, 
we have added a number of new theorems and links between known results.

\noindent \textbf{Keywords:} Toeplitz operators, weighted composition operators, singular integral operators, generalized Volterra-type operators

\noindent \textbf{AMS subject classification:} 30H20, 46E22, 47B35, 47B33
\end{abstract}

\section{Introduction}
Let $F^2$ be the {\it Fock space} or {\it Segal-Bargmann space} of all Gaussian square integrable entire functions over the complex plane. Various classes of 
bounded and unbounded operators, as well as, Banach or $C^*$-algebras generated by them have been studied in the last decades. Partly such operators arise 
in models of mathematical physics such as Berezin-Toeplitz quantization or are of interest from the view point of (quantum) harmonic analysis, pseudo-differential 
operators or spectral theory of i.g.\ non-selfadjoint operators. Typical questions in the investigation of the individual operators concern boundedness or compactness characterizations or the inclusion in certain operator ideals (e.g.\ symmetric normed ideals such as the Schatten-$p$-classes), see for example \cite{Bauer_Fulsche2020, Bauer_Isralowitz2012,Berger_Coburn1994,Cao_Li2020,Fulsche2020,Galanopoulos_Stylogiannis2023,Grudsky_Vasilevski2002,Isralowitz_Virtanen_Wolf2015,Le2014}. While such questions are classical and have been studied for many years in the framework of other Hilbert spaces with a reproducing kernel (e.g.\ Bergman or Hardy spaces) a number of specific methods are available on the Fock space that can be used in the investigation and make it possible to answer some questions in the first place. To be more precise, Toeplitz operators on $F^2$ are unitarily equivalent via the Bargmann transform to pseudodifferential operators in Weyl quantization, a class for which an elaborated theory is available. Based on the approach in \cite{werner84} the so-called {\it quantum harmonic analysis} (QHA) and the {\it correspondence principle} have recently been established as powerful tools in the operator theory in (poly)-Fock spaces, see \cite{Fulsche2020,Fulsche2024,Fulsche_Hagger2023}.  
\vspace{1mm}\par 
The {\it Fock-Toeplitz algebra} (shortly: Toeplitz algebra) $\mathcal{T}$ over $F^2$ is the $C^*$-algebra generated by all Toeplitz operators with essentially bounded symbols. 
This $C^*$-algebra is invariant under operator shifts and in particular appears naturally in QHA. As is well-known,  the Toeplitz algebra plays an important role in the problem of 
compactness characterizations via the Berezin transform \cite{Bauer_Isralowitz2012}, and is linked to the notion of {\it (sufficiently) localized operators} \cite{Xia2015}. 
Various characterizations of $\mathcal{T}$ that partly do not use the notion of Toeplitz operators and are often easier to handle in applications are known in the literature \cite{Bauer_Fulsche2020,Fulsche2020,Xia2015}. In particular, it has been shown that the Toeplitz algebra $\mathcal{T}$ is strictly located between the compact and bounded operators 
$\mathcal{K}(F^2) \subsetneq \mathcal{T} \subsetneq \mathcal{L}(F^2)$.
\vspace{1mm}\par 
 In this paper we consider various classes of operators defined on the Fock space and previously known in the literature. Besides Toeplitz(-type) operators themselves these classes include {\it weighted composition operators, singular integral operators, generalized Volterra-type operators or Hausdorff operators}. The following two questions naturally arise: 
\begin{itemize}
\item[(A)] When is a given operator in one of the above classes contained in $\mathcal{T}$, and therefore can be approximated by localized operators?
\item[(B)] Under which conditions is an operator in one of the above classes even a Toeplitz operator with a bounded symbol and (in this case) how to compute this 
symbol from the mapping properties of the operator?
\end{itemize}
\par 
Finding an answer to (B) typically is more challenging and obviously implies a result in relation to question (A). To give an answer to question (A), we may use a characterization  of the Toeplitz algebra $\mathcal{T}$ that is particularly suited for the respective class of operators. 
\vspace{1mm}\par 
The present work combines a survey with a series of new results. In parts, known results are considered from the perspective of QHA, which leads to new proofs and reveals 
so far unexplored connections between different classes of operators. The contents and main results of the individual chapters are briefly described below. 
\vspace{1ex}\par 
In Section 2 we set up the notation. We give the definition and basic properties of the Berezin transform and bivariate Berezin transform of operators on $F^2$, as well as their canonical integral representations. 

The main results of the paper are contained in Section 3. The discussion starts with a reminder of the notion of Toeplitz operators and the Toeplitz algebra $\mathcal{T}$ along with some of its important properties. Specific examples of Toeplitz operators are the unitary Weyl operators in Section 3.2, which play a crucial role in the definition of operator shifts and 
a characterization of $\mathcal{T}$ in {\sc Theorem \ref{thm:Toeplitz_eq_c1}}. 

Section 3.3 introduces the class of weighted composition operators $W_{\psi, \varphi}$ (see \cite{Le2014,Ueki2007}). First, we remind the reader of well-known boundedness 
and compactness characterizations. As a new result we present a lower bound on the distance of $W_{\psi, \varphi}$ to the Toeplitz algebra. It then is a corollary that 
the only non-compact bounded operators $W_{\psi, \varphi}$ in $\mathcal{T}$ are non-trivial constant multiples of Weyl operators. In addition we can characterize the membership of differences 
of two weighted composition operators $W_{\psi_1, \varphi_1}-W_{\psi_2, \varphi_2}$ in $\mathcal{T}$. {\sc Theorem \ref{thm:compequalstoeplitz}} provides an accessible sufficient condition for $W_{\psi, \varphi}$ being a Toeplitz operator $T_{f_{\psi, \varphi}}$ with an explicitly given bounded symbol $f_{\psi, \varphi}\in L^{\infty}(\mathbb{C})$. 
Hence we have found a quite satisfactory answer to (A) and (B) above for weighted composition operators. 

Singular integral operators $S_{\varphi}$ on the Fock space, which are studied in Section 3.4, were introduced in \cite{Zhu2015} and more recently have been further investigated in \cite{Cao_Li2020}. 
{\sc Theorem \ref{thm_singular-integral-operators-questions-A-B}} of the paper presents a complete answer to the questions (A) and (B) in case of operators $S_{\varphi}$. 
Afterwards, we relate the theory of singular integral operators to the theory of commutative Toeplitz $C^\ast$-algebras, showing in particular that the set of bounded singular integral operators coincides with the commutative von Neumann algebra of vertical operators. This, in turn, leads to some open questions for future work.

Section 3.5 deals with generalized Volterra-type operators $V_{(g, \varphi)}$ on $F^2$, \cite{Fekadiea2023,Mengestie_Worku2018}. Boundedness and compactness criteria are known and will be recalled at the beginning of this section. We answer question (A) in the case of an operator $V_{(g, \varphi)}$ in {\sc Theorem \ref{thm:volterra_toeplitzalg}}. As a corollary it is observed that non-compact operators $V_{(g, \varphi)}$ in $\mathcal{T}$ are necessary Fredholm operators of index 
${\rm ind} V_{(g, \varphi)}=-2$. 

Section 3.6 studies Toeplitz-type operators (a.k.a.\ {\it generalized Toeplitz operators} in \cite{Xu_Yu2023,Xu_Yu2024}). Methods from QHA are used to provide a compactness characterization of such operators via the vanishing of its Berezin transform at infinity. Moreover, it is shown that Toeplitz-type operators are unitarily equivalent to ordinary 
Toeplitz operators (with the same symbol) acting on some polyanalytic Fock space.  To the best knowledge of the authors this result is new. 

The paper concludes with an investigation of Hausdorff operators on $F^2$ in Section 3.7, see \cite{Galanopoulos_Stylogiannis2023}. Boundedness and compactness 
of these operators have been characterized in the literature and as a new result we show that a bounded Hausdorff operator automatically defines an element in the 
Toeplitz algebra $\mathcal{T}$ ({\sc Proposition \ref{prop:bounded_HO_T_algebra}}). Studying problem (B) in this setting leads to a Stieltjes moment problem which is left open 
for future investigations. 

In the paper we have discussed a certainly incomplete list of operators acting on the Fock space. There are various other operator classes on $F^2$ 
which have been considered in the literature but are not included here, e.g.\ see \cite{Dong_Zhu2022, Jin_Tang_Feng2022}. 

\section{Preliminaries}
We denote by $\mu$ the Gaussian measure
\begin{align*}
    d\mu(z) = \frac{1}{\pi}e^{-|z|^2}dz
\end{align*}
on $\mathbb C$, where $dz$ is the Lebesgue measure. Then, we set
\begin{align*}
    F^2 = L^2(\mathbb C, ~d\mu) \cap \operatorname{Hol}(\mathbb C),
\end{align*}
where $\operatorname{Hol}(\mathbb C)$ denotes the space of all entire functions. $F^2$ is called the \emph{Fock space} or {\it Segal-Bargmann space}, basic references on its properties are \cite{Zhu2012, Berger_Coburn1987, Janson_Peetre_Rochberg1987, Berezin1972, Berezin1974}. Of course, $F^2$ is a closed subspace of $L^2(\mathbb C, \mu)$, hence itself a Hilbert space with the inner product
\begin{align*}
    \langle f, g\rangle = \int_{\mathbb C}f(w) \overline{g(w)}~d\mu(w).
\end{align*}
The standard orthonormal basis of $F^2$ is given by the monomials
\begin{align*}
    e_n(z) = \frac{1}{\sqrt{n!}}z^n, \quad n \in \mathbb N_0.
\end{align*}
As is well-known, $F^2$ is a reproducing kernel Hilbert space over $\mathbb C$, and the reproducing kernel function is given by
\begin{align*}
    K_z(w) = K(w, z) = e^{w \cdot \overline{z}}.
\end{align*}
This means that $ f(z) = \langle f, K_z\rangle$ for every $z \in \mathbb C$ and $f \in F^2$. The norm of the reproducing kernel function can be explicitly computed and is given by 
$\| K_z\| = e^{\frac{|z|^2}{2}}$. Thus, the normalized reproducing kernel is
\begin{align*}
    k_z(w) = \frac{K_z(w)}{\| K_z\|} = e^{w \cdot \overline{z} - \frac{| z|^2}{2}}.
\end{align*}
A basic object in the operator theory on $F^2$ is the \emph{Berezin transform}, which is defined as follows: Given any operator $A$ on $F^2$ with domain $D(A)$
such that $k_z \in D(A)$ for every $z \in \mathbb C$, we define its  Berezin transform $\widetilde{A}$ as a function $\widetilde{A}: \mathbb C \to \mathbb C$ by
\begin{align*}
    \widetilde{A}(z) := \langle Ak_z, k_z\rangle.
\end{align*}
The Berezin transform is injective on a suitable class of linear operators which strictly contains the bounded linear operators. In the following, we will only deal with this strict subclass, which ensures that the Berezin transform always uniquely determines the operator. For such operators, we always have 
$\| \widetilde{A}\|_\infty \leq \| A\|_{\textup{op}}$. We note that 
$\widetilde{A}$ is a uniformly continuous (even real analytic) function.

There are other transforms on the space of bounded linear operators on $F^2$: For example, there is the \emph{bivariate Berezin transform}: For $w, z \in \mathbb C$ we set
\begin{align*}
    \widetilde{A}(w, z) := \langle Ak_w, k_z\rangle.
\end{align*}
Clearly, we have $\widetilde{A}(z) = \widetilde{A}(z,z)$, i.e., also the bivariate Berezin transform uniquely determines the operator (the injectivity of the Berezin transform is actually a consequence of the injectivity of the bivariate Berezin transform).

We also mention the {\it (canonical) integral kernel} of a bounded linear operator. For any $A \in \mathcal L(F^2)$ and $f \in F^2$, $z \in \mathbb C$ 
we can express $Af$ as an integral:
\begin{align*}
    Af(z) &= \langle Af, K_z\rangle = \langle f, A^\ast K_z\rangle\\
    &= \int_{\mathbb C} f(w) \overline{A^\ast K_z(w)} ~d\mu(w)\\
    &= \int_{\mathbb C} f(w) \overline{\langle A^\ast K_z, K_w\rangle}~d\mu(w)\\
    &= \int_{\mathbb C} f(w) \langle AK_w, K_z \rangle ~d\mu(w).
\end{align*}
Hence, setting $K_A(w,z) = \langle AK_w, K_z\rangle$, every bounded linear operator on $F^2$ is an integral operator:
\begin{align*}
    Af(z) = \int_{\mathbb C} K_A(w,z) f(w)~d\mu(w).
\end{align*}
Clearly, since 
\begin{align*}
    K_A(w,z) = e^{\frac{|z|^2 + |w|^2}{2}} \widetilde{A}(w,z),
\end{align*}
the canonical integral kernel is also uniquely determined by the operator. It is important to stress that, nevertheless, a linear operator can still be given by 
more than one different integral expression; we will see such examples below. In such cases, at most one of the expressions can be given by the canonical integral kernel. Note that we have well-known formulas for the integral kernel of the composition of two integral operators:
\begin{align*}
    K_{AB}(w,z) = \int_{\mathbb C} K_A(w,\xi)K_B(\xi,z)~d\mu(\xi).
\end{align*}
In particular, we have for the (bivariate) Berezin transforms:
\begin{align*}
    \widetilde{AB}(w,z) &= e^{-\frac{|z|^2+|w|^2}{2}} K_{AB}(w,z)\\
    &= e^{-\frac{|z|^2+|w|^2}{2}} \int_{\mathbb C} K_A(w,\xi) K_B(\xi, z)~d\mu(\xi)
    = \frac{1}{\pi} \int_{\mathbb C} \widetilde{A}(w,\xi) \widetilde{B}(\xi,z)~d\xi
\end{align*}
and
\begin{align*}
    \widetilde{AB}(z) = \frac{1}{\pi} \int_{\mathbb C} \widetilde{A}(z,\xi)\widetilde{B}(\xi, z)~d\xi.
\end{align*}
\section{Classes of operators on the Fock space}

In this section, we will study several specific classes of linear operators acting on the Fock space. More specifically, we investigate weighted composition operators (Section \ref{sec:weightedcomp}), singular integral operators (Section \ref{sec:singularint}), generalized Volterra-type operators (Section \ref{sec:volterra}), Toeplitz-type operators (Section \ref{sec:toeplitztype}) and Hausdorff operators (Section \ref{sec:Hausdorff}). For each of these classes of linear operators, results characterizing boundedness and compactness of the operators are available in the literature. We will provide brief reviews on these results in each section. Beyond reproducing known results, we will deal with the following questions for each of those classes: \emph{When are such operators contained in the Toeplitz algebra? When can such operators be written as Toeplitz operators?} For each of the above-mentioned classes, we obtain a full characterization regarding the first question. For the second question, we obtain some sufficient criteria. Before starting these investigations, we will give a short review on Toeplitz and Weyl operators on the Fock space in Sections \ref{sec:Toeplitz} and \ref{sec:Weyl}, respectively.

\subsection{Toeplitz operators}\label{sec:Toeplitz}

Toeplitz operators on the Fock space (a.k.a.\ {\it Berezin-Toeplitz operators}) have been studied intensively in the literature during the last decades (see, e.g., \cite{Cho_Park_Zhu2014, Bommier-Hato_Youssfi_Zhu2015, Hu_Lv2014, Hu_Lv2017, Isralowitz_Virtanen_Wolf2015, Esmeral_Maximenko2016, Esmeral_Vasilevski2016} and references therein) and in particular they 
arise in classical models of quantization. This class of operators is not closed under taking products, however, it defines the Toeplitz algebra $\mathcal{T}$ (see \cite{Bauer_Fulsche2020,Fulsche2020,Hagger2021,Xia2015}) which is a central object in this paper. We first recall the definition and state some fundamental result 
which provides a compactness characterization for bounded operators on $F^2$. 

\vspace{1mm}\par 
We denote by $P$ the orthogonal projection from $L^2(\mathbb C, \mu)$ onto $F^2$. In particular, we have for $z \in \mathbb C$ and $g \in L^2(\mathbb C, \mu)$:
\begin{align*}
    Pg(z) = \langle Pg, K_z\rangle = \langle g, K_z\rangle = \int_{\mathbb C} g(w) e^{z \cdot \overline w}~d\mu(w).
\end{align*}
For $f \in L^\infty(\mathbb C)$ we define the Toeplitz operator $T_f$ with symbol $f$ by
\begin{align*}
    T_f(g) = P(fg),
\end{align*}
i.e.
\begin{align*}
    T_f(g)(z) = \int_{\mathbb C} f(w) g(w) e^{z \cdot \overline w}~d\mu(w).
\end{align*}
Note that the (bivariate) Berezin transform is given by the following integral transforms (where $w, z \in\mathbb C$):
\begin{align}
    \widetilde{T_f}(w,z) &= \langle P( fk_w), k_z\rangle = \langle fk_w, k_z\rangle\notag \\
    &= e^{-\frac{|z|^2+|w|^2}{2}} \int_{\mathbb C} f(v) e^{v\cdot \overline w + z \cdot \overline v}~d\mu(v)\\
  \widetilde{f}(z):&=\widetilde{T_f}(z) = e^{-|z|^2} \int_{\mathbb C} f(v) e^{2\re(v \cdot \overline z)}~d\mu(v)
    = \int_{\mathbb C} f(v+z)~d\mu(v). 
\end{align}
As a consequence of the last identity we remark that the Berezin transform ``$\sim$'' on functions $f$ commutes with the {\it shifts} $\alpha_z(f)= f (\cdot -z)$. The canonical integral kernel is therefore
\begin{align}
    K_{T_f}(w,z) = \int_{\mathbb C} f(v) e^{v\cdot \overline w + z \cdot \overline v}~d\mu(v),
\end{align}
i.e., we have
\begin{align*}
    \int_{\mathbb C} g(w) \int_{\mathbb C} f(v) e^{v\cdot \overline w + z \cdot \overline v}~d\mu(v)~d\mu(w) = T_f(g)(z) = \int_{\mathbb C} f(v) g(v) e^{z \cdot \overline v}~d\mu(v).
\end{align*}
By $\mathcal T$ we denote the $C^\ast$-algebra generated by all Toeplitz operators with $L^\infty$ symbols and we refer to $\mathcal T$ as the \emph{Toeplitz algebra} (over $F^2$). 

Let $\overline{\mathcal{S}}$ denote the linear closure of a set of operators $\mathcal{S} \subset \mathcal{L}(F^2)$. 
An important fact regarding this algebra is the following:
\begin{theorem}[\cite{Xia2015, Fulsche2020}]
      $  \mathcal T = \overline{\{ T_f: ~f \in L^\infty(\mathbb C)\}}.$
\end{theorem}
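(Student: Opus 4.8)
The inclusion $\overline{\{T_f : f\in L^\infty(\mathbb C)\}}\subseteq\mathcal T$ is immediate, since $\mathcal T$ is a norm-closed algebra containing every $T_f$, hence contains their linear span and its closure. The whole content is the reverse inclusion, and the plan is to prove it by sandwiching both sides between a third, more flexible object. I would bring in the Weyl (displacement) operators $W_z$ and the induced isometric $*$-automorphisms $\alpha_z(A) = W_z A W_z^{\ast}$ of $\mathcal L(F^2)$ (developed in Section \ref{sec:Weyl}), and consider the set $\mathcal C_1 := \{A\in\mathcal L(F^2) : \lim_{z\to 0}\|\alpha_z(A) - A\| = 0\}$ of operators with norm-continuous orbit. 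The aim is the chain $\mathcal C_1\subseteq\overline{\{T_f\}}\subseteq\mathcal T\subseteq\mathcal C_1$, from which the theorem (and, as a byproduct, $\mathcal T = \mathcal C_1$) follows.

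First I would record the elementary facts. Since each $\alpha_z$ is an isometric $*$-automorphism, the estimates $\|\alpha_z(AB) - AB\|\le\|A\|\,\|\alpha_z(B)-B\| + \|\alpha_z(A)-A\|\,\|B\|$ and $\|\alpha_z(A^\ast)-A^\ast\| = \|\alpha_z(A)-A\|$, together with a routine closedness argument, show that $\mathcal C_1$ is a closed $*$-subalgebra, hence a $C^\ast$-algebra. The inclusion $\mathcal T\subseteq\mathcal C_1$ then reduces to the smoothing lemma $T_f\in\mathcal C_1$ for every $f\in L^\infty(\mathbb C)$. Here I would use the reduced kernel $\langle T_f k_w, k_\zeta\rangle = \widetilde{T_f}(w,\zeta)$ from Section \ref{sec:Toeplitz}; a Gaussian integration in its integral representation yields the off-diagonal decay $|\langle T_f k_w, k_\zeta\rangle|\le\|f\|_\infty\, e^{-|w-\zeta|^2/4}$. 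Combining the coherent-state resolution $\frac1\pi\int_{\mathbb C} k_w\otimes k_w\,dw = I$ (that is, $T_1 = I$) with the Parseval relation $\frac1\pi\int_{\mathbb C}|\langle u, k_w\rangle|^2\,dw = \|u\|^2$ gives a Schur-test bound $\|A\|\le\bigl(\sup_w\tfrac1\pi\int|\langle A k_w,k_\zeta\rangle|\,d\zeta\bigr)^{1/2}\bigl(\sup_\zeta\tfrac1\pi\int|\langle A k_w,k_\zeta\rangle|\,dw\bigr)^{1/2}$. Applying it to $A = \alpha_z(T_f) - T_f$, whose reduced kernel is $e^{i\im(z(\overline\zeta-\overline w))}\langle T_f k_{w-z},k_{\zeta-z}\rangle - \langle T_f k_w,k_\zeta\rangle$ and is dominated by $2\|f\|_\infty e^{-|w-\zeta|^2/4}$ uniformly in $z$, a dominated-convergence argument produces $\|\alpha_z(T_f)-T_f\|\to 0$.

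For the crucial inclusion $\mathcal C_1\subseteq\overline{\{T_f\}}$ I would use the operator-function convolution $\phi * A := \frac1\pi\int_{\mathbb C}\phi(z)\,\alpha_z(A)\,dz$ for $\phi\in L^1$, a norm-convergent Bochner integral with $\|\phi * A\|\le\frac1\pi\|\phi\|_1\|A\|$. Two identities drive the argument. Writing $T_f = f * P_0$ for the vacuum projection $P_0 = k_0\otimes k_0$ and using the shift-covariance $\widetilde{\alpha_z(A)} = \widetilde A(\cdot - z)$, one checks that convolution with the Gaussian $\gamma(z) = e^{-|z|^2}$ reproduces Toeplitz operators, $\gamma * A = T_{\widetilde A}$ for every bounded $A$ (both sides have the same Berezin transform, which is injective). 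Moreover these convolutions associate, $(\phi\star\rho) * A = \phi *(\rho * A)$, where $\star$ denotes the induced function convolution. Now, for $A\in\mathcal C_1$ take an approximate identity $(\phi_\varepsilon)\subseteq L^1$; then $\phi_\varepsilon * A\to A$ in norm precisely because the orbit is continuous at $0$. Since $\gamma$ has nonvanishing (Gaussian) Fourier transform, Wiener's Tauberian theorem shows $\{\gamma\star\rho : \rho\in L^1\}$ is dense in $L^1$; approximating $\phi_\varepsilon$ by such $\gamma\star\rho_n$ and invoking associativity gives $(\gamma\star\rho_n)*A = \gamma*(\rho_n * A) = T_{\widetilde{\rho_n * A}}\in\{T_f\}$, so $\phi_\varepsilon * A\in\overline{\{T_f\}}$ as a norm limit, and therefore $A\in\overline{\{T_f\}}$.

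The main obstacle is the smoothing lemma $T_f\in\mathcal C_1$: the naive bound $\|\alpha_z(T_f) - T_f\|\le\|f(\cdot - z) - f\|_\infty$ is useless for rough $f\in L^\infty$, so one must genuinely exploit the Gaussian localization of the Berezin--Toeplitz calculus, and the one delicate point is extracting uniformity in $w$ in the Schur estimate. The remaining inputs---the $C^\ast$-algebra structure of $\mathcal C_1$, the identity $\gamma * A = T_{\widetilde A}$, and the Wiener Tauberian density---are comparatively clean once the convolution formalism of quantum harmonic analysis is in place; this is exactly the machinery of \cite{Fulsche2020} and, from the localization side, of \cite{Xia2015}.
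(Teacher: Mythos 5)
Your proposal is correct and is, in essence, the proof from \cite{Fulsche2020}, which is exactly where this paper sources the statement (the paper itself gives no proof, citing \cite{Xia2015, Fulsche2020}; your chain $\mathcal C_1 \subseteq \overline{\{T_f\}} \subseteq \mathcal T \subseteq \mathcal C_1$ is precisely how $\mathcal T = \mathcal C_1$, recorded here as Theorem \ref{thm:Toeplitz_eq_c1}, is established there, including the convolution formalism, the identity $\gamma \ast A = T_{\widetilde A}$, and Wiener's Tauberian theorem). The one place you deviate is the smoothing lemma $T_f \in \mathcal C_1$: the QHA route writes $T_f = \frac{1}{\pi} f \ast (k_0 \otimes k_0)$ and uses $\alpha_z(f \ast S) - f \ast S = f \ast (\alpha_z(S) - S)$ together with the estimate $\|f \ast T\|_{op} \leq \frac{1}{\pi}\|f\|_\infty \|T\|_{\mathcal T^1}$ and trace-norm continuity of $z \mapsto \alpha_z(k_0 \otimes k_0)$, which gives the conclusion in two lines and with no uniformity issue. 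Your Schur-test alternative also works, but the uniformity in $w$ that you flag as delicate needs one more idea than dominated convergence at fixed $w$: use covariance, $\alpha_w(T_g) = T_{g(\cdot - w)}$, to reduce the Schur integral to $w = 0$ at the cost of replacing $f$ by a translate $f(\cdot - w)$; since the resulting bound
\begin{align*}
\int_{\mathbb C} |\langle T_{f(\cdot - w - z)} k_0 - T_{f(\cdot - w)}k_0, k_\zeta\rangle|~d\zeta \leq \|f\|_\infty \cdot \Phi(z), \qquad \Phi(z) \to 0 \text{ as } z \to 0,
\end{align*}
depends on the translate only through $\|f\|_\infty$ (expand the kernel difference as $f(v) e^{\zeta \overline v - |v|^2}\bigl(e^{\zeta \overline z - 2\re(v \overline z) - |z|^2} - 1\bigr)$ and dominate jointly in $(v, \zeta)$), the supremum over $w$ comes for free. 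With that patch your argument is complete; without it, the dominated-convergence step as written only controls each fixed $w$, not $\sup_w$.
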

As usual we denote by $\mathcal{K}(F^2)$ the ideal of compact operators on the Fock space. Let $C_0(\mathbb{C})$ be space of continuous complex-valued functions 
vanishing at infinity. 

The Toeplitz algebra shows up prominently in the following result concerning compactness:
\begin{theorem}[\cite{Bauer_Isralowitz2012}]\label{thm:compactness}
Let $A \in \mathcal L(F^2)$. Then, $A \in \mathcal K(F^2)$ if and only if $A \in \mathcal T$ and $\widetilde{A} \in C_0(\mathbb C)$.
\end{theorem}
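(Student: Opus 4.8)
The plan is to prove the two implications separately, with the forward direction (compactness $\Rightarrow$ the two conditions) being routine and the converse being the substantive part. For the forward direction, suppose $A \in \mathcal{K}(F^2)$. That $A \in \mathcal T$ is just the already-recorded inclusion $\mathcal K(F^2) \subset \mathcal T$; if one wishes to reprove it, one observes that $\mathcal T$ acts irreducibly on $F^2$ (an operator commuting with every $T_f$, $f \in L^\infty(\mathbb C)$, is forced to be scalar) and contains a nonzero compact operator, e.g.\ the rank-one projection onto the constants, realized as a norm limit of Toeplitz operators with shrinking, suitably normalized bump symbols, so the standard fact that an irreducible $C^\ast$-subalgebra containing one nonzero compact operator contains all of $\mathcal K(F^2)$ applies. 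For the Berezin condition I would first check that the normalized reproducing kernels are weakly null, $k_z \rightharpoonup 0$ as $|z|\to\infty$: for $f \in F^2$ we have $\langle k_z, f\rangle = \overline{f(z)}\,e^{-|z|^2/2}$, which tends to $0$ since it does so on the dense subspace of polynomials and the uniform bound $|f(z)e^{-|z|^2/2}| \le \|f\|$ lets one pass to general $f$. Compactness then upgrades weak-null to norm-null, $\|Ak_z\|\to 0$, so $|\widetilde A(z)| = |\langle Ak_z, k_z\rangle| \le \|Ak_z\| \to 0$, and together with the (already noted) continuity of $\widetilde A$ this gives $\widetilde A \in C_0(\mathbb C)$.

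For the converse, suppose $A \in \mathcal T$ and $\widetilde A \in C_0(\mathbb C)$. The organizing idea is the shift-covariance of the setup. The Weyl operators $W_z$ are unitaries in $\mathcal T$ that implement translations, $W_z T_f W_z^{\ast} = T_{f(\cdot - z)}$, so conjugation $\alpha_z(A) = W_z A W_z^{\ast}$ preserves $\mathcal T$, and at the level of Berezin transforms it acts by the very translation under which $\widetilde{\,\cdot\,}$ is equivariant, $\widetilde{\alpha_z(A)} = \widetilde A(\cdot - z)$. I would then invoke the limit-operator (band-dominated) description of compactness inside $\mathcal T$: an element $A \in \mathcal T$ is compact if and only if every ``limit operator'' arising as an $\ast$-strong limit of $\alpha_{z_n}(A)$ along a sequence $z_n \to \infty$ is zero. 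Granting this, the proof closes cleanly. If $\alpha_{z_n}(A) \to B$ $\ast$-strongly, then the Berezin transform passes to the limit pointwise, $\widetilde{\alpha_{z_n}(A)}(w) = \langle \alpha_{z_n}(A)k_w, k_w\rangle \to \langle Bk_w, k_w\rangle = \widetilde B(w)$; but $\widetilde{\alpha_{z_n}(A)}(w) = \widetilde A(w - z_n) \to 0$ for each fixed $w$ since $\widetilde A \in C_0(\mathbb C)$. Hence $\widetilde B \equiv 0$, and injectivity of the Berezin transform forces $B = 0$. As every limit operator vanishes, $A$ is compact.

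The main obstacle is exactly the structural input used above: that compactness of an element of $\mathcal T$ is detected by the vanishing of all its limit operators. This is not a formal consequence of the definitions but the heart of the localization theory of the Toeplitz algebra (Xia's sufficiently-localized operators, and the quantum-harmonic-analysis/correspondence framework of Fulsche), and it is where a genuine compactness estimate must be supplied, namely that an $\alpha$-invariant family with trivial asymptotics is approximable in norm by finite-rank or convolution-smoothed operators. Everything else, including the reduction via the characterization $\mathcal T = \overline{\{T_f : f \in L^\infty(\mathbb C)\}}$ and the translation equivariance of $\widetilde{\,\cdot\,}$, is bookkeeping around this core. An alternative, more quantitative route would bypass limit operators entirely by working through the QHA correspondence: there the Berezin transform is the convolution of $A$ with a fixed Gaussian state, and compactness corresponds to decay at infinity of the convolution of $A$ against any trace-class operator, so an approximation/Tauberian argument transfers decay of $\widetilde A$ to decay against a dense set of states and thereby yields compactness.
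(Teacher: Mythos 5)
This theorem is stated in the paper as a cited result from \cite{Bauer_Isralowitz2012}; the paper supplies no proof of its own, so there is nothing internal to compare against, and your proposal must be judged on its own feet. Your forward direction is complete and correct: $\mathcal K(F^2)\subset\mathcal T$ via irreducibility (the commutant of $\{T_f\}$ is trivial since the Weyl operators $W_z=T_{g_z}$ form an irreducible system) together with one nonzero compact Toeplitz operator --- though you could avoid the limiting construction of the rank-one projection entirely, since any compactly supported symbol already gives a Hilbert--Schmidt, hence compact, nonzero Toeplitz operator; and the weak-nullity argument $k_z\rightharpoonup 0$ upgrading to $\|Ak_z\|\to 0$ is exactly right.

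The converse, however, is a reduction rather than a proof. The limit-operator criterion you invoke --- $A\in\mathcal T$ is compact if and only if all limit operators of $(\alpha_{z_n}(A))$ along $z_n\to\infty$ vanish --- is of essentially the same depth as the theorem itself; in the literature the two are proved together or one is derived from the other, so granting it as a black box leaves the entire analytic content unproven. There is also a technical point your formulation glosses over: bounded sequences in $\mathcal L(F^2)$ are only WOT-sequentially compact, not $\ast$-strongly, so the criterion is vacuous unless one also establishes \emph{richness}, i.e., that every sequence $z_n\to\infty$ admits a subsequence along which $\alpha_{z_n}(A)$ actually converges in the relevant topology; for $A\in\mathcal T$ this is true but requires the band-approximation/localization machinery. (For your specific argument WOT limits would suffice, since $\widetilde{\alpha_{z_n}(A)}(w)\to\widetilde B(w)$ already under WOT convergence, but then the criterion must be known to hold with WOT limit operators.) Your closing alternative is in fact the cleanest complete route and the one consonant with this paper's QHA viewpoint: $\widetilde A = \frac{1}{\pi}(e_0\otimes e_0)\ast A$ is convolution with the Gaussian state, whose Fourier--Weyl transform $e^{-|\xi|^2/2}$ is nowhere vanishing, so Werner's quantum Wiener--Tauberian/correspondence theorem (as in \cite{werner84, Fulsche2020}) converts $\widetilde A\in C_0(\mathbb C)$ directly into $A\in\mathcal K(F^2)$ for $A\in\mathcal T=\mathcal C_1$. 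If you flesh out that Tauberian step instead of the limit-operator black box, you obtain a genuine proof.
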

There are several important facts to note from the previous theorem: Firstly, the Toeplitz algebra contains all compact operators. Secondly, the theorem yields a useful way to prove that an operator is not contained in the Toeplitz algebra: If it is not compact but the Berezin transform is in $C_0(\mathbb C)$, the operator cannot lie in $\mathcal T$. Finally, it is in practice not hard to check if the Berezin transform of an operator is vanishing at infinity. On the other hand, it seems a hard problem to check whether a bounded linear operator is contained in the Toeplitz algebra: How should one see whether an operator can be approximated by sums of products of Toeplitz operators? Addressing this question can be suitably done by using the class of operators described in the next section.  Before ending our discussion on Toeplitz operators, let us mention that it can be shown, by means of the \emph{Berger-Coburn estimates} \cite{Berger_Coburn1994}, that even bounded Toeplitz operators with unbounded symbols are often contained in $\mathcal T$. Without giving details on the definition of $T_f$ for unbounded $f$ (it is defined by the same integral expression on the space of all $g \in F^2$ for which the integral exists absolutely), we give the following result:
\begin{theorem}[{\cite[Theorem 4.19]{Fulsche2024}}]\label{thm:BergerCoburnToeplitzAlg}
    Let $f: \mathbb C \to \mathbb C$ be measurable such that $f K_z \in L^2(\mathbb C, \mu)$ for each $z \in \mathbb C$. Denote $\widetilde{f}^{(t)} = f \ast g_t$, where $g_t(z) = \frac{1}{\pi t}e^{-\frac{|z|^2}{t}}$.
If $\widetilde{f}^{(t)} \in L^\infty(\mathbb C)$ for some $0 < t < \frac{1}{2}$, then $T_f \in \mathcal T$.
\end{theorem}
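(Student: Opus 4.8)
The plan is to deduce the claim from the characterization $\mathcal T = C_1$ recorded in Theorem~\ref{thm:Toeplitz_eq_c1}, where $C_1$ is the algebra of operators $A$ whose Weyl orbit $z \mapsto \alpha_z(A) := W_z A W_z^\ast$ is continuous in operator norm. Thus it suffices to show that $T_f$ is bounded and that $z \mapsto \alpha_z(T_f)$ is norm-continuous. Two standard facts drive the argument: Toeplitz operators are Weyl-covariant, $\alpha_z(T_f) = T_{\alpha_z f}$ with $\alpha_z f = f(\cdot - z)$, and the Gaussians form a convolution semigroup $g_s \ast g_{s'} = g_{s+s'}$, so that $\widetilde f^{(s)} = f \ast g_s$ and convolution commutes with translation, $\alpha_z f \ast g_s = \alpha_z \widetilde f^{(s)}$.

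Since $t < 1/2$, I first fix a parameter $\tau$ with $t < \tau < 1/2$ and pass to the smoothed symbol $H := \widetilde f^{(\tau)}$. Writing $H = \widetilde f^{(t)} \ast g_{\tau - t}$ and using that $\widetilde f^{(t)} \in L^\infty$ while $g_{\tau - t} \in L^1$, Young's inequality gives $H \in L^\infty$; differentiating through the convolution, $\nabla H = \widetilde f^{(t)} \ast \nabla g_{\tau - t}$ is bounded because $\nabla g_{\tau - t} \in L^1$, so $H$ is Lipschitz and in particular uniformly continuous.

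Next I invoke the Berger--Coburn estimate \cite{Berger_Coburn1994} in the form: for each fixed $\tau \in (0, 1/2)$ there is a finite constant $C(\tau)$ such that $\| T_\phi \| \le C(\tau) \, \| \widetilde\phi^{(\tau)} \|_\infty$ for every admissible symbol $\phi$. In particular $T_f$ is bounded. Applying this to $\phi = \alpha_z f - \alpha_{z'} f$ and using covariance together with translation invariance of the convolution,
\[
\| \alpha_z(T_f) - \alpha_{z'}(T_f) \| = \| T_{\alpha_z f - \alpha_{z'} f} \| \le C(\tau) \, \| \alpha_z H - \alpha_{z'} H \|_\infty = C(\tau) \, \| H - \alpha_{z'-z} H \|_\infty,
\]
which tends to $0$ as $z' \to z$ by the uniform continuity of $H$. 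Hence the orbit of $T_f$ is norm-continuous and $T_f \in C_1 = \mathcal T$.

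The main obstacle is the matching of the two requirements on $\tau$: the strict inequality $t < 1/2$ is exactly what leaves room for a single $\tau \in (t, 1/2)$ that is simultaneously large enough ($\tau \ge t$) for $\widetilde f^{(\tau)}$ to inherit boundedness and uniform continuity from $\widetilde f^{(t)}$, and small enough ($\tau < 1/2$) for the Berger--Coburn estimate to hold with a finite constant $C(\tau)$. The constant $C(\tau)$ degenerates as $\tau \uparrow 1/2$, which is the genuine obstruction preventing the threshold $t = 1/2$. A secondary technical point is to check that each translate $\alpha_z f$ still satisfies the standing hypothesis $\alpha_z f \cdot K_\zeta \in L^2(\mathbb C, \mu)$, which follows by a change of variables reducing it to $f K_{\zeta - z} \in L^2(\mathbb C, \mu)$, and to justify differentiating under the integral defining $H$. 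As an alternative one may argue entirely within quantum harmonic analysis, writing $T_f = \widetilde f^{(t)} \ast B_t$ where $B_t$ is the convolution inverse of the Gaussian Toeplitz operator $T_{g_t}$; a direct computation of eigenvalues shows $B_t$ is trace class precisely when $t < 1/2$, and the inclusion $L^\infty \ast \mathcal S_1 \subset \mathcal T$ then finishes the proof, with the same threshold reappearing from the operator side.
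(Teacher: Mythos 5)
The paper itself contains no proof of this theorem---it is quoted from \cite[Theorem 4.19]{Fulsche2024}, with the surrounding text indicating only that the proof goes ``by means of the Berger--Coburn estimates''. Your argument is correct and is precisely that intended route: boundedness of $T_f$ and the estimate $\|T_\phi\| \leq C(\tau)\|\widetilde{\phi}^{(\tau)}\|_\infty$ for $0<\tau<\frac{1}{2}$ applied to the difference symbols $\alpha_z f - \alpha_{z'}f$, together with the key (and necessary) step of passing from $t$ to some $\tau \in (t, \frac{1}{2})$ via the heat semigroup to upgrade mere boundedness of $\widetilde{f}^{(t)}$ to uniform continuity of $\widetilde{f}^{(\tau)}$, so that Theorem \ref{thm:Toeplitz_eq_c1} yields $T_f \in \mathcal{T}$; your alternative quantum-harmonic-analysis sketch, factoring $T_f = \widetilde{f}^{(t)} \ast B_t$ with $B_t$ trace class exactly for $t < \frac{1}{2}$ and invoking $L^\infty \ast \mathcal{S}_1 \subset \mathcal{T}$, is likewise sound and explains the same threshold from the operator side.
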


\subsection{Weyl operators}\label{sec:Weyl}
For $z \in \mathbb C$ we define the Weyl operator $W_z$ by
\begin{align}\label{def:Weyloperator}
    W_z f(w) = k_z(w) f(w-z).
\end{align}
An easy substitution shows that the Weyl operators are isometric on $F^2$. Since $W_z f(w) \to f(w)$ pointwise as $z \to 0$, a well-known fact from measure theory ({\it Scheff\'{e}'s lemma}) shows that the assignment $z \mapsto W_z$ is continuous in strong operator topology. Moreover, $W_z$ is unitary and satisfies the identities
\begin{align*}
    W_z^\ast = W_z^{-1} = W_{-z}, \quad W_z W_w  = e^{-i\im(z \cdot \overline w)}W_{z+w}.
\end{align*}
Even though this is not at all clear when one encounters these operators, they are contained in the Toeplitz algebra. Even more, they are Toeplitz operators themselves: Upon letting
\begin{align}\label{Weyl-operator-equal-TO}
    g_z(w) = e^{2i\im(w\cdot \overline{z}) + \frac{1}{2}|z|^2}
\end{align}
it is $T_{g_z} = W_z$. 
\vspace{1mm}\par 
That the equality is true is, again, not at all obvious. But we have enough techniques available to show this: We will compare the Berezin transforms! The bivariate Berezin transform of a Weyl operator is given by
\begin{align}
    \widetilde{W_z}(w, v) &= \langle W_z k_w, k_v\rangle = \langle W_z W_w 1, k_v\rangle\notag \\
    &= e^{-i\im(z \cdot \overline w)} \langle W_{z+w} 1, k_v\rangle\notag \\
    &= e^{-i\im(z \cdot \overline w)} \langle k_{z+w}, k_v\rangle\notag \\
    &= e^{-i\im(z \cdot \overline w)} e^{-\frac{|z+w|^2 + |v|^2}{2}}\langle K_{z+w}, K_v\rangle\notag \\
    &= e^{-i\im(z \cdot \overline w)} e^{-\frac{|z+w|^2 + |v|^2}{2}} e^{v \cdot \overline{(z+w)}}\notag \\
    &= e^{-\frac{|z|^2 + |w|^2 + |v|^2}{2} - z\cdot \overline w + v \cdot \overline z + v\cdot \overline w}.
\end{align}
In particular,
\begin{align}
    \widetilde{W_z}(w) &= e^{-\frac{|z|^2}{2} + 2i\im (w \cdot \overline z)},\\
    K_{W_z}(w,v) &= e^{-\frac{|z|^2}{2} + v\cdot \overline{z} + v \cdot \overline{w} - z \cdot \overline{w}}
\end{align}
It is not hard to verify that $T_{g_z}$ has indeed the same Berezin transform. The Weyl operators are important for our purpose because they implement a particular group action of $\mathbb C$ on $\mathcal L(F^2)$: For $A \in \mathcal L(F^2)$ and $z \in \mathbb C$ set
\begin{align*}
    \alpha_z(A) = W_z A W_z^\ast .
\end{align*}
Note that for each $A \in \mathcal L(F^2)$, $z \mapsto \alpha_z(A)$ is continuous with respect to the strong operator topology. Here, the special role of the Toeplitz algebra comes into play:
\begin{theorem}[{\cite[Theorem 3.1]{Fulsche2020}}]\label{thm:Toeplitz_eq_c1}
It is
\begin{align*}
    \mathcal T = \{ A \in \mathcal L(F^2): ~z \mapsto \alpha_z(A) \text{ is } \| \cdot\|_{op}\text{-continuous}\}.
\end{align*}
\end{theorem}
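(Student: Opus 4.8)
The plan is to write $\mathcal{C}_1 := \{A \in \mathcal{L}(F^2) : z \mapsto \alpha_z(A) \text{ is norm-continuous}\}$ for the right-hand side and to establish the two inclusions separately, using throughout the \emph{group convolution} $f \ast A := \int_{\mathbb{C}} f(z)\,\alpha_z(A)\,dz$ (a Bochner integral for $f \in L^1(\mathbb{C})$ and $A \in \mathcal{L}(F^2)$, with $\|f \ast A\| \le \|f\|_1 \|A\|$). First I would record the soft preliminaries. Since each $\alpha_z$ is an isometric $\ast$-automorphism, $\mathcal{C}_1$ is a norm-closed $\ast$-subalgebra: the product is handled by $\|\alpha_z(AB) - \alpha_w(AB)\| \le \|A\|\,\|\alpha_z(B) - \alpha_w(B)\| + \|\alpha_z(A) - \alpha_w(A)\|\,\|B\|$, and norm-closedness by the isometry $\|\alpha_z(A_n - A)\| = \|A_n - A\|$, which lets one pass continuity through uniform limits. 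Because $\alpha_w \alpha_{z-w} = \alpha_z$ and $\alpha_w$ is isometric, continuity of the orbit at $0$ already gives continuity everywhere. Finally, $f \ast A \in \mathcal{C}_1$ always (as $\alpha_w(f \ast A) = (\lambda_w f) \ast A$, with $\lambda_w f = f(\cdot - w)$, and $w \mapsto \lambda_w f$ is continuous into $L^1$), and the Gaussian family $g_t$ is an approximate identity giving $g_t \ast A \to A$ in norm for every $A \in \mathcal{C}_1$.

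For $\mathcal{C}_1 \subseteq \mathcal{T}$ — which I expect to be the main obstacle — the approximate-identity remark reduces everything to showing $f \ast A \in \mathcal{T}$ for all $f \in L^1(\mathbb{C})$ and all $A \in \mathcal{L}(F^2)$; note that a generic bounded $A$ is not a norm-limit of Toeplitz operators (indeed $\mathcal{T} \subsetneq \mathcal{L}(F^2)$), so the averaging itself must do the work. The decisive device is the identity $g_1 \ast B = T_{\widetilde{B}}$, valid for every bounded $B$: both operators are bounded and a direct computation shows each has Berezin transform $g_1 \ast \widetilde{B}$, so they coincide by injectivity of the Berezin transform. Consequently, whenever $f = g_1 \ast f'$ with $f' \in L^1$, associativity of the convolution gives $f \ast A = g_1 \ast (f' \ast A) = T_{\widetilde{f' \ast A}} \in \mathcal{T}$. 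Since $g_1 \ast L^1(\mathbb{C})$ is dense in $L^1(\mathbb{C})$ (its annihilator consists of $\phi \in L^\infty$ with $g_1 \ast \phi = 0$, forcing $\widehat{\phi} = 0$ because $\widehat{g_1}$ is nowhere zero), and $f \mapsto f \ast A$ is norm-continuous while $\mathcal{T}$ is closed, it follows that $f \ast A \in \mathcal{T}$ for all $f \in L^1$. Taking $f = g_t$ and letting $t \to 0$ then yields $A \in \mathcal{T}$ for each $A \in \mathcal{C}_1$.

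For the reverse inclusion $\mathcal{T} \subseteq \mathcal{C}_1$, since $\mathcal{C}_1$ is norm-closed and $\mathcal{T} = \overline{\{T_f : f \in L^\infty\}}$, it suffices to place every Toeplitz operator in $\mathcal{C}_1$, i.e.\ to show $\|\alpha_z(T_f) - T_f\| = \|T_{\lambda_z f - f}\| \to 0$ as $z \to 0$, where I use $\alpha_z(T_f) = T_{\lambda_z f}$. The subtlety is that $\lambda_z f - f$ only tends to $0$ weakly-$\ast$, not in $L^\infty$, so the bare bound $\|T_h\| \le \|h\|_\infty$ is useless; the upgrade from weak-$\ast$ smallness of the symbol to operator-norm smallness is precisely what the Berger--Coburn estimate provides. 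Fixing $s \in (0, \tfrac12)$ and writing $F := \widetilde{f}^{(s)} = f \ast g_s$, which is bounded and uniformly continuous, the estimate $\|T_h\| \le C_s\,\|\widetilde{h}^{(s)}\|_\infty$ together with $(\lambda_z f - f) \ast g_s = \lambda_z F - F$ gives $\|T_{\lambda_z f - f}\| \le C_s\,\|\lambda_z F - F\|_\infty \to 0$ by uniform continuity of $F$. This proves $T_f \in \mathcal{C}_1$ and completes the argument. The only genuinely nontrivial inputs are the Berezin-injectivity identity $g_1 \ast B = T_{\widetilde{B}}$ driving the first inclusion and the quantitative Berger--Coburn estimate driving the second.
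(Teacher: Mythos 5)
Your proof is correct, but a preliminary remark is in order: the paper itself offers no proof of this statement --- it is quoted verbatim from \cite{Fulsche2020}, so the relevant comparison is with the original correspondence-theoretic argument there. On the inclusion $\mathcal C_1 \subseteq \mathcal T$ you have essentially reconstructed, in self-contained form, the mechanics of Werner's correspondence theory that drive the original proof: heat-kernel regularization $g_t \ast A \to A$ for norm-continuous orbits, the identity $g_1 \ast B = T_{\widetilde B}$ verified through injectivity of the Berezin transform, and Wiener's Tauberian theorem to upgrade from $g_1 \ast L^1$ to all of $L^1$; a pleasant byproduct of your version is that every $A \in \mathcal C_1$ is exhibited as a norm limit of Toeplitz operators with bounded (indeed $\operatorname{BUC}$) symbols, which recovers Xia's density theorem rather than assuming it. Where you genuinely diverge is the reverse inclusion $\mathcal T \subseteq \mathcal C_1$: you import the quantitative Berger--Coburn estimate $\| T_h\| \leq C_s \| \widetilde{h}^{(s)}\|_\infty$ for $s \in (0, \frac12)$, a nontrivial hard-analysis input (only its qualitative cousin, Theorem \ref{thm:BergerCoburnToeplitzAlg}, appears in the paper), whereas the QHA-native route gets $T_f \in \mathcal C_1$ for all $f \in L^\infty(\mathbb C)$ without it: write $T_f = \frac{1}{\pi} f \ast (e_0 \otimes e_0)$ (checked by comparing Berezin transforms, exactly in the style of your $g_1 \ast B$ identity), note $\| h \ast B\| \leq \| h\|_\infty \| B\|_{\mathcal T^1}$ so that $h \ast B \in \mathcal C_1$ whenever $h \in \operatorname{BUC}(\mathbb C)$ and $B \in \mathcal T^1(F^2)$, and then use $(g_t \ast f) \ast B = g_t \ast (f \ast B) \to f \ast B$ in operator norm, which follows from trace-norm continuity of $z \mapsto \alpha_z(B)$ on $\mathcal T^1(F^2)$. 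Your route buys a short, symbol-level argument at the cost of an external quantitative estimate; the convolution route is more elementary, stays inside the formalism you already set up for the first inclusion, and survives on $F^p$ spaces where Berger--Coburn constants are not available.

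One presentational point, not a gap: for a general $A \in \mathcal L(F^2)$ the orbit $z \mapsto \alpha_z(A)$ is only SOT-continuous, so $f \ast A$ cannot in general be defined as a norm Bochner integral as you state; it should be defined weakly, as the paper itself does in Section \ref{sec:toeplitztype}. In your argument this is harmless, since every convolution you actually take is applied to an operator with norm-continuous orbit (an element of $\mathcal C_1$, or $f' \ast A$ with $A \in \mathcal C_1$), where Bochner integrability does hold; alternatively, phrase the lemma ``$f \ast A \in \mathcal T$ for all $f \in L^1$'' only for $A \in \mathcal C_1$, which is all you use.
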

We want to emphasize that there are several additional useful characterizations of the algebra $\mathcal T$, cf.\ \cite{Hagger2021, Bauer_Fulsche2020, Xia2015}.

Due to the previous result, we find it appropriate to quickly discuss how the ``shifts'' behave with respect to the Berezin transform and the canonical integral kernel. 
First, we observe: 
\begin{align*}
    \widetilde{\alpha_z(A)}(w,v) &= \langle W_z A W_{-z} k_w, k_v\rangle = \langle A W_{-z} W_w 1, W_{-z} W_v 1\rangle\\
    &= \langle e^{i\im(z\cdot \overline{w})} A W_{w-z} 1, e^{i\im(z \cdot \overline v)} W_{v-z} 1\rangle\\
    &= e^{i\im(z \cdot \overline{(w-v)})} \langle A k_{w-z}, k_{v-z}\rangle\\
    &= e^{i\im(z \cdot \overline{(w-v)})} \widetilde{A}(w-z, v-z).
\end{align*}
In particular, this gives
\begin{align*}
    \widetilde{\alpha_z(A)}(w) = \widetilde{A}(w-z)
\end{align*}
and
\begin{align*}
    K_{\alpha_z(A)}(w, v) &= e^{\frac{|w|^2+|v|^2}{2}} \widetilde{\alpha_z(A)}(w,v)\\
    &= e^{\frac{|w|^2+|v|^2}{2}} e^{i\im(z \cdot \overline{(w-v)})}\widetilde{A}(w-z, v-z)\\
    &= e^{\frac{|w|^2+|v|^2}{2}} e^{i\im(z \cdot \overline{(w-v)})} e^{-\frac{|w-z|^2 + |v-z|^2}{2}} K_A(w-z, v-z)\\
    &= e^{-|z|^2 + z\cdot \overline w + v \cdot \overline z} K_A(w-z, v-z).
\end{align*}
We want to give the following convenient sufficient criterion for proving that an operator is contained in $\mathcal T$ in terms of its bivariate Berezin transform:
\begin{proposition}[{\cite[Theorem 3.6]{Fulsche2023}}]\label{crit:Wiener_algebra}
    Assume $A \in \mathcal L(F^2)$ is such that there exists $H \in L^1(\mathbb C)$ satisfying
    \begin{align*}
        |\widetilde{A}(z, w)| \leq H(z-w).
    \end{align*}
    Then, $A \in \mathcal T$. 
\end{proposition}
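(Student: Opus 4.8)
The plan is to establish operator-norm continuity of the orbit $z \mapsto \alpha_z(A)$ and then invoke Theorem \ref{thm:Toeplitz_eq_c1}. The engine of the whole argument is an a priori bound of the operator norm by the $L^1$-norm of the dominating function: if $B \in \mathcal L(F^2)$ satisfies $|\widetilde B(w,v)| \le H(w-v)$ with $H \in L^1(\mathbb C)$, then $\|B\|_{\mathrm{op}} \le \tfrac1\pi \|H\|_{L^1}$. To obtain this I would start from the overcompleteness relation $f = \tfrac1\pi \int_{\mathbb C} \langle f, k_w\rangle k_w \, dw$, which gives $\tfrac1\pi\int_{\mathbb C}|\langle f, k_w\rangle|^2\,dw = \|f\|^2$, and expand
\[
    \langle Bf, g\rangle = \frac{1}{\pi^2}\int_{\mathbb C}\int_{\mathbb C} \langle f, k_w\rangle\, \overline{\langle g, k_v\rangle}\, \widetilde B(w,v)\, dw\, dv .
\]
Dominating $|\widetilde B(w,v)|$ by $H(w-v)$ and recognizing the resulting kernel on $L^2(\mathbb C)$ as convolution against $H$ (whose norm is at most $\|H\|_{L^1}$ by Young's/Schur's inequality), a Cauchy--Schwarz step yields the claimed estimate.

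With this in hand the task reduces to producing, for $B = \alpha_z(A) - A$, a dominating profile $H_z$ with $\|H_z\|_{L^1} \to 0$ as $z \to 0$. Using $\widetilde{\alpha_z(A)}(w,v) = e^{i\im(z\cdot\overline{(w-v)})}\widetilde A(w-z,v-z)$ and writing $w = c+u$, $v = c$, I split
\[
    \widetilde{\alpha_z(A) - A}(c+u, c) = e^{i\im(z\cdot\overline u)}\big[\gamma_u(c-z) - \gamma_u(c)\big] + \big(e^{i\im(z\cdot\overline u)} - 1\big)\gamma_u(c),
\]
where $\gamma_u(c) := \widetilde A(c+u, c)$. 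The second term is bounded by $\min(2, |z|\,|u|)\,H(u)$, so after taking the supremum over $c$ and integrating in $u$ it tends to $0$ by dominated convergence (majorant $2H \in L^1$). Everything therefore hinges on controlling $D_z(u) := \sup_c |\gamma_u(c-z) - \gamma_u(c)|$ and showing $\|D_z\|_{L^1} \to 0$.

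This is the main obstacle, because $\widetilde A$ is \emph{not} uniformly continuous on $\mathbb C^2$ in general, so one cannot bound $|\gamma_u(c-z) - \gamma_u(c)|$ uniformly in the ``center'' $c$ by a naive modulus of continuity. The idea I would use to break this is the reproducing identity for the bivariate Berezin transform, obtained by inserting the overcompleteness relation twice,
\[
    \widetilde A(w,v) = \frac{1}{\pi^2}\int_{\mathbb C}\int_{\mathbb C} \langle k_w, k_s\rangle\, \langle k_t, k_v\rangle\, \widetilde A(s,t)\, ds\, dt .
\]
Substituting $s = c+p$, $t = c+q$ and forming $\gamma_u(c-z) - \gamma_u(c)$, each coherent-state inner product factors as a $c$-independent modulus (a Gaussian in $p,q,u,z$) times a phase; the crucial computation is that the $c$-linear part of the phase of the shifted product $\langle k_{c+u-z}, k_{c+p}\rangle \langle k_{c+q}, k_{c-z}\rangle$ coincides with that of the unshifted product $\langle k_{c+u}, k_{c+p}\rangle\langle k_{c+q}, k_c\rangle$, both being $e^{i\im(c\cdot\overline{(p-q-u)})}$. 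Hence this phase factors out of the difference and cancels in modulus, giving
\[
    |\gamma_u(c-z) - \gamma_u(c)| \le \frac{1}{\pi^2}\int_{\mathbb C}\int_{\mathbb C} M_z(p,q)\, H(p-q)\, dp\, dq =: E_z(u),
\]
with $M_z(p,q)$ \emph{independent of $c$} and dominated by a sum of fixed Gaussians. This $c$-uniform bound is exactly what the naive approach lacked.

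It then remains to show $\|E_z\|_{L^1}\to 0$. For fixed $u$, $M_z(p,q)\to 0$ pointwise as $z\to 0$ by continuity of $z\mapsto k_{u-z}$, so $E_z(u)\to 0$ by dominated convergence; and integrating the Gaussian majorant of $M_z$ first in $u$, then in $p$ against $H$, produces a fixed $L^1(\mathbb C)$ majorant of $E_z$ valid for $|z|\le 1$. A last application of dominated convergence gives $\|E_z\|_{L^1}\to 0$, whence $\|\alpha_z(A) - A\|_{\mathrm{op}}\to 0$ and $A\in\mathcal T$ by Theorem \ref{thm:Toeplitz_eq_c1}. I expect the phase-cancellation identity to be the one genuinely delicate point; the remaining steps are routine Gaussian bookkeeping and two appeals to dominated convergence.
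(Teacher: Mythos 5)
The paper does not actually prove this proposition --- it is imported by citation from \cite[Theorem 3.6]{Fulsche2023} --- so there is no internal proof to compare against; judged on its own terms, your argument is correct and complete, and its strategy (an a priori norm bound by the $L^1$-norm of the dominating profile, followed by norm-continuity of $z \mapsto \alpha_z(A)$ and Theorem \ref{thm:Toeplitz_eq_c1}) is the expected one for Wiener-algebra results of this type. The opening estimate $\|B\|_{\mathrm{op}} \leq \pi^{-1}\|H\|_{L^1}$ is valid: with $\int_{\mathbb C}|\langle f, k_w\rangle|^2\,dw = \pi\|f\|^2$, the double coherent-state expansion of $\langle Bf, g\rangle$ plus Young/Schur gives exactly this. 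You also correctly diagnosed the one genuinely delicate point: the bivariate Berezin transform is \emph{not} uniformly continuous in the center variable (that failure is precisely equivalent to the norm-continuity one is trying to prove), so a naive modulus-of-continuity bound on $\gamma_u(c-z)-\gamma_u(c)$ is unavailable. Your phase-cancellation fix checks out. Writing $\langle k_a, k_b\rangle = e^{b\overline{a} - (|a|^2+|b|^2)/2}$, one verifies that all $c$-quadratic terms cancel, the moduli are $c$-free, and both products $\langle k_{c+u-z}, k_{c+p}\rangle \langle k_{c+q}, k_{c-z}\rangle$ and $\langle k_{c+u}, k_{c+p}\rangle \langle k_{c+q}, k_{c}\rangle$ carry the identical $c$-dependence, namely the pure phase $e^{i\im(c\cdot \overline{(u-p+q)})}$; hence it factors out of the difference and produces the $c$-uniform majorant $E_z(u)$. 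The two closing applications of dominated convergence are justified: for $|z|\leq 1$ your $M_z$ is bounded by fixed Gaussian-type profiles in $u-p$ and $q$, so $E_z \leq C\, g \ast g \ast H \in L^1(\mathbb C)$ uniformly, while pointwise continuity in $z$ gives $E_z(u) \to 0$; thus $\|H_z\|_{L^1} \to 0$ with $H_z(u) = E_z(u) + \min(2, |z||u|)H(u)$, and the Step-1 bound yields $\|\alpha_z(A)-A\|_{\mathrm{op}} \to 0$.

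Two cosmetic slips, neither of which affects validity: your $M_z$ depends on $u$ as well as on $(p,q)$ (as your own definition of $E_z(u)$ already presupposes), and the common $c$-linear phase is $e^{i\im(c\cdot\overline{(u-p+q)})}$, i.e.\ the negative of the exponent you wrote --- immaterial, since the argument only uses that the two phases \emph{agree}, not their sign.
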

\subsection{Weighted composition operators}\label{sec:weightedcomp}
Given a function $\psi \in F^2$ and a $\varphi \in \operatorname{Hol}(\mathbb C)$, we can define the \emph{weighted composition operator} $W_{\psi, \varphi}$ on $F^2$ by
\begin{align*}
    W_{\psi, \varphi}f = \psi \cdot f \circ \varphi, \quad f \in F^2.
\end{align*}
See \cite{Carroll_Gilmore2021, Le2014,Ueki2007} for historical notes. The functions $\varphi$ and $\psi$ are called the {\it symbol} and the 
{\it multiplier} of $W_{\psi, \varphi}$, respectively. Note that for $\psi = k_z$ and $\varphi(w) = w-z$, $W_{\psi, \varphi} = W_z$. We collect some important properties. The first characterization of boundedness, respectively compactness was obtained in \cite{Ueki2007} in terms of the quantity 
\begin{align*}
    B_\varphi(|\psi|^2)(z) :=& \| W_{\psi, \varphi} k_z\|^2\\
    =&\frac{1}{\pi} \int_{\mathbb C} |\psi(w)|^2 | e^{\varphi(w) \cdot \overline{z}}|^2e^{-|z|^2-|w|^2}~dw.
\end{align*}
A second characterization was obtained in \cite{Le2014} in terms of another quantity. Before introducing this quantity, we first want to note that 
\begin{align}\label{eq:w_adjoint}
W_{\psi, \varphi}^\ast K_z(w) &= \overline{\psi(z)}K_{\varphi(z)}(w).
\end{align}
Using this, it is not hard to verify that
\begin{align}\label{eq:mz}
    M_z(\psi, \varphi) :=& \| W_{\psi, \varphi}^\ast k_z\|^2
    = |\psi(z)|^2 e^{|\varphi(z)|^2 - |z|^2}.
\end{align}
In the folllowing we will also write 
$$M(\psi, \varphi) := \sup_{z \in \mathbb C} M_z(\psi, \varphi).$$

For the next two results we refer to \cite{Carroll_Gilmore2021, Le2014, Ueki2007} and references therein.
\begin{lemma}[{\cite[Proposition 2.1]{Le2014}}]\label{Lemma-wco-bounded}
When $0 \neq \psi \in F^2$ and $\varphi \in \operatorname{Hol}(\mathbb C)$ such that $M(\psi, \varphi) < \infty$, then there are $a, \lambda \in \mathbb C$, $|\lambda|\leq 1$, such that
\begin{align*}
    \varphi(w) = a + \lambda w. 
\end{align*}
When $|\lambda|= 1$, then also
\begin{align*}
    \psi(w) = \psi(0) e^{-\overline{a}\lambda w}
\end{align*}
and in this case, $M_z(\psi, \varphi)$ is constant and strictly positive (as a function of $z$).
\end{lemma}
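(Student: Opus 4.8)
The plan is to extract everything from the single scalar inequality encoded by the hypothesis. Since $M(\psi,\varphi) = \sup_z M_z(\psi,\varphi) < \infty$, identity \eqref{eq:mz} gives a constant $C > 0$ with
\[
    |\psi(z)|^2 e^{|\varphi(z)|^2 - |z|^2} \le C \qquad (z \in \mathbb C).
\]
Taking $\tfrac12\log$ turns this into the pointwise bound $\log|\psi(z)| + \tfrac12|\varphi(z)|^2 \le \tfrac12|z|^2 + \tfrac12\log C$. I would not try to bound $\varphi$ pointwise from this, because $\log|\psi|$ dives to $-\infty$ at the zeros of $\psi$, where any bound on $|\varphi|$ becomes worthless. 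Instead the idea is to average over circles $|z| = r$, where the zeros of $\psi$ are harmless thanks to Jensen's formula.

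Integrating the displayed inequality over $|z|=r$ and dividing by $2\pi$, the term $\frac{1}{2\pi}\int_0^{2\pi}\log|\psi(re^{i\theta})|\,d\theta$ is, by Jensen's formula, at least $\log|\psi(0)|$ when $\psi(0)\neq 0$ (and, after factoring $\psi(z) = z^m\tilde\psi(z)$ with $\tilde\psi(0)\neq0$, at least $\log|\tilde\psi(0)|$ for $r \ge 1$ in general). Writing $\varphi(z) = \sum_{k\ge 0}c_k z^k$, Parseval gives $\frac{1}{2\pi}\int_0^{2\pi}|\varphi(re^{i\theta})|^2 \, d\theta = \sum_{k\ge 0}|c_k|^2 r^{2k}$. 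Combining these, I obtain a constant $C'$ with $\sum_{k\ge0}|c_k|^2 r^{2k} \le r^2 + C'$ for all $r \ge 1$. Dividing by $r^{2k}$ and letting $r\to\infty$ forces $c_k = 0$ for every $k\ge 2$; hence $\varphi(z) = a + \lambda z$ is affine with $a = c_0$ and $\lambda = c_1$. Feeding this back in, the same inequality reads $|c_0|^2 + |\lambda|^2 r^2 \le r^2 + C'$, and letting $r\to\infty$ yields $|\lambda| \le 1$.

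For the rigidity in the case $|\lambda| = 1$, I substitute the affine $\varphi$ into the starting inequality and expand $|a+\lambda z|^2 - |z|^2 = (|\lambda|^2-1)|z|^2 + 2\re(\overline a\lambda z) + |a|^2$, which collapses to $2\re(\overline a\lambda z)+|a|^2$ since $|\lambda|=1$. This gives $|\psi(z)|^2 \le Ce^{-|a|^2}e^{-2\re(\overline a\lambda z)}$, so the entire function $\Psi(z) := \psi(z)e^{\overline a\lambda z}$ satisfies $|\Psi(z)|^2 \le Ce^{-|a|^2}$ and is therefore bounded. By Liouville's theorem $\Psi$ is constant, equal to $\Psi(0) = \psi(0)$, whence $\psi(z) = \psi(0)e^{-\overline a\lambda z}$; here $\psi(0)\neq 0$ because $\psi\not\equiv 0$. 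Finally, inserting these explicit $\psi$ and $\varphi$ into \eqref{eq:mz} and again using $|\lambda| = 1$ cancels the $z$-dependence exactly and gives $M_z(\psi,\varphi) = |\psi(0)|^2 e^{|a|^2}$, a strictly positive constant.

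The heart of the argument, and the step needing the most care, is the passage from the pointwise inequality to a usable growth bound on $\varphi$: the naive pointwise estimate fails at the zeros of $\psi$, and the circle-average combined with Jensen's formula is precisely the device that neutralizes those zeros while still retaining enough to pin down the Taylor coefficients of $\varphi$. Everything afterwards—the affine form, the bound $|\lambda|\le1$, and the boundary case $|\lambda|=1$—follows from elementary manipulations and Liouville's theorem.
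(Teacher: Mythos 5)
Your proof is correct, and it is worth noting that the paper itself does not prove this lemma at all: it is quoted verbatim from \cite[Proposition 2.1]{Le2014}, so there is no in-paper argument to match against. Your circle-average argument is a clean, self-contained derivation. The key difficulty in the weighted setting is exactly the one you identify: the pointwise bound $\log|\psi(z)| + \tfrac12|\varphi(z)|^2 \le \tfrac12|z|^2 + \tfrac12\log C$ degenerates at the zeros of $\psi$, so one cannot directly invoke the standard growth lemma of Carswell--MacCluer--Schuster (the route the unweighted composition-operator literature takes, where $|\varphi(z)|^2 - |z|^2$ is bounded above \emph{everywhere} and Cauchy estimates finish the job). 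Averaging over $|z|=r$ and controlling the $\log|\psi|$ term from below via Jensen's formula (with the factorization $\psi(z) = z^m\tilde\psi(z)$ covering $\psi(0)=0$, and the logarithmic singularities on circles through zeros being integrable, so the mean inequality holds for every $r \ge 1$) neutralizes the zero set, and Parseval then pins down the Taylor coefficients: $\sum_k |c_k|^2 r^{2k} \le r^2 + C'$ forces $c_k = 0$ for $k \ge 2$ and $|c_1| \le 1$. The rigidity step for $|\lambda| = 1$ — expanding $|a+\lambda z|^2 - |z|^2 = |a|^2 + 2\re(\overline a \lambda z)$, observing that $\Psi(z) = \psi(z)e^{\overline a \lambda z}$ is bounded entire, and applying Liouville — is exactly the standard argument and is carried out correctly, including the verification that $M_z(\psi,\varphi) \equiv |\psi(0)|^2 e^{|a|^2} > 0$. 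One small bonus of your approach: it never uses the hypothesis $\psi \in F^2$, only that $\psi$ is entire and not identically zero, so you have in fact proved a marginally stronger statement than the one quoted.
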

Having defined all the necessary terms and objects now, here are the main results about the weighted composition operators: 
\begin{theorem}[\cite{Ueki2007, Le2014}]\label{Theorem-properties-weighted-composition-operators} 
Let $\psi, \varphi \in \operatorname{Hol}(\mathbb C)$. Then, the following are equivalent:
\begin{enumerate}
    \item[(1)] $W_{\psi, \varphi}$ is bounded on $F^2$;
    \item[(1')] $\psi \in F^2$ and $M(\psi, \varphi) < \infty$;
    \item[(1'')] $\sup_{z\in \mathbb C} B_\varphi(|\psi|^2)(z) < \infty$.
\end{enumerate}
    Assume that $W_{\psi, \varphi}$ is bounded such that $\varphi(w) = a+\lambda w$ with $|\lambda|\leq 1$. Then, the following are equivalent:
\begin{enumerate}
    \item[(2)] $W_{\psi, \varphi}$ is compact; 
    \item[(2')] $M_{z}(\psi, \varphi) \to 0$ as $|z| \to \infty$;
    \item[(2'')] $B_\varphi(|\psi|^2)(z) \to 0$ as $|z| \to \infty$.
\end{enumerate}
If $\lambda = 0$, then $W_{\psi, \varphi}$ is of rank one.
\end{theorem}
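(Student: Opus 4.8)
The plan is to reduce the entire theorem to one identity for $W_{\psi,\varphi}^\ast W_{\psi,\varphi}$ and then read off both boundedness and compactness from it. Using $f(\varphi(z)) = \langle f, K_{\varphi(z)}\rangle$ and $\|K_{\varphi(z)}\|^2 = e^{|\varphi(z)|^2}$, a direct computation — valid in $[0,\infty]$ for every $f \in F^2$ and before any boundedness is known — gives the master identity
\[
\|W_{\psi,\varphi}f\|^2 = \frac{1}{\pi}\int_{\mathbb C} M_z(\psi,\varphi)\,\bigl|\langle f, k_{\varphi(z)}\rangle\bigr|^2\,dz,
\]
i.e.\ $W_{\psi,\varphi}^\ast W_{\psi,\varphi} = \frac{1}{\pi}\int_{\mathbb C} M_z(\psi,\varphi)\,\bigl(k_{\varphi(z)} \otimes k_{\varphi(z)}\bigr)\,dz$ as a continuous diagonal operator, where $k_{\varphi(z)}\otimes k_{\varphi(z)}$ denotes $f \mapsto \langle f, k_{\varphi(z)}\rangle k_{\varphi(z)}$. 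All remaining work is to control this integral.

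For the boundedness part, $(1) \Rightarrow (1')$ and $(1) \Rightarrow (1'')$ are immediate: testing on the unit vectors $k_z$ gives $B_\varphi(|\psi|^2)(z) = \|W_{\psi,\varphi}k_z\|^2 \le \|W_{\psi,\varphi}\|_{\textup{op}}^2$ and $M_z(\psi,\varphi) = \|W_{\psi,\varphi}^\ast k_z\|^2 \le \|W_{\psi,\varphi}\|_{\textup{op}}^2$, while $\psi = W_{\psi,\varphi}k_0 \in F^2$ because $k_0 = 1$. For the substantial direction $(1') \Rightarrow (1)$ I would first invoke Lemma \ref{Lemma-wco-bounded} to write $\varphi(w) = a + \lambda w$ with $|\lambda|\le 1$. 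If $\lambda = 0$ then $W_{\psi,\varphi}f = \langle f, K_a\rangle\,\psi$ is of rank one, which is bounded and also settles the last sentence of the theorem. If $\lambda \ne 0$, the change of variables $u = \varphi(z)$ together with $|\langle f, k_u\rangle|^2 = |f(u)|^2 e^{-|u|^2}$ produces the Bessel-type identity $\frac{1}{\pi}\int_{\mathbb C}|\langle f, k_{\varphi(z)}\rangle|^2\,dz = \frac{1}{|\lambda|^2}\|f\|^2$; feeding it into the master identity yields $\|W_{\psi,\varphi}f\|^2 \le \frac{M(\psi,\varphi)}{|\lambda|^2}\|f\|^2$, so $W_{\psi,\varphi}$ is bounded.

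For the compactness part I assume $W_{\psi,\varphi}$ bounded with $\varphi(w) = a + \lambda w$. Since $k_z \to 0$ weakly as $|z|\to\infty$ (check on the basis $e_n$, where $|e_n(z)|e^{-|z|^2/2}\to 0$), a compact $W_{\psi,\varphi}$ and its compact adjoint send $k_z$ to norm-null vectors, which gives $(2)\Rightarrow(2')$ and $(2)\Rightarrow(2'')$. For $(2')\Rightarrow(2)$ I would take $\|f_n\|\le 1$ with $f_n \to 0$ weakly and split the master-identity integral at $|z| = R$: on $\{|z|>R\}$ the hypothesis $M_z \to 0$ and the uniform Bessel bound make the contribution at most $\varepsilon/|\lambda|^2$, while on $\{|z|\le R\}$ the integrand is dominated by $M(\psi,\varphi)$ and tends to $0$ pointwise, so dominated convergence removes it as $n\to\infty$. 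Letting $\varepsilon\to 0$ shows $\|W_{\psi,\varphi}f_n\|\to 0$, hence $W_{\psi,\varphi}$ is compact (the case $\lambda=0$ being the rank-one one already handled).

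Finally, to tie in the primed-prime conditions, I would evaluate the master identity at $f = k_z$ and use $|\langle k_z, k_u\rangle|^2 = e^{-|z-u|^2}$; the same change of variables reveals the bridge
\[
B_\varphi(|\psi|^2)(z) = \frac{1}{|\lambda|^2}\,\bigl((M\circ\varphi^{-1}) \ast G\bigr)(z), \qquad G(z) = \frac{1}{\pi}e^{-|z|^2},
\]
so that $B_\varphi(|\psi|^2)$ is precisely a Gaussian smoothing of (the pushforward of) $M$. As $\|G\|_{L^1} = 1$, boundedness and vanishing at infinity of $M$ transfer to $B_\varphi(|\psi|^2)$, giving $(1')\Rightarrow(1'')$ and $(2')\Rightarrow(2'')$ at once. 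The main obstacle is the reverse passage $(1'')\Rightarrow(1')$ and $(2'')\Rightarrow(2')$, i.e.\ recovering the pointwise quantity $M_z$ from its Gaussian average $B_\varphi(|\psi|^2)$; I would handle this with a sub-mean-value estimate for $|h|^2$, $h$ entire, so that $M$ is dominated by a slightly wider Gaussian average of itself and hence by a constant multiple of $B_\varphi(|\psi|^2)$. A secondary subtlety worth flagging is that the reduction to affine $\varphi$ rests on Lemma \ref{Lemma-wco-bounded}; under $(1'')$ alone one must either establish $(1'')\Rightarrow(1')$ first or re-derive the affine form directly from the convolution identity.
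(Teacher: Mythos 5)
The paper itself does not prove this theorem (it is quoted from \cite{Ueki2007, Le2014}), so your argument is necessarily an independent route; judged on its own terms, most of it checks out. The master identity is correct and legitimately valid in $[0,\infty]$ by Tonelli; the Bessel-type identity $\frac{1}{\pi}\int_{\mathbb C}|\langle f, k_{\varphi(z)}\rangle|^2\,dz = |\lambda|^{-2}\|f\|^2$ follows from the substitution $u = a+\lambda z$ (real Jacobian $|\lambda|^2$) and cleanly delivers $(1')\Rightarrow(1)$; the $R$-splitting argument gives $(2')\Rightarrow(2)$; the easy implications by testing on $k_z$ are fine; and the bridge identity $B_\varphi(|\psi|^2) = |\lambda|^{-2}\bigl((M\circ\varphi^{-1})\ast G\bigr)$ is correct and, combined with the sub-mean-value inequality, does yield $(2'')\Rightarrow(2')$ — legitimately, because in the compactness half the affine form of $\varphi$ is part of the hypothesis (and for $\lambda = 0$ one has $M_z(\psi,\varphi) = |\psi(z)|^2e^{|a|^2-|z|^2}\to 0$ automatically since $\psi \in F^2$).

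The genuine gap is the direction $(1'')\Rightarrow(1')$, which you flag but do not close, and your proposed route is circular as it stands: the bridge identity you would use to dominate $M$ by $B_\varphi(|\psi|^2)$ is only available once you know $\varphi(w) = a+\lambda w$ with $\lambda \neq 0$, while under $(1'')$ alone nothing yet forces $\varphi$ to be affine — Lemma \ref{Lemma-wco-bounded} takes $M(\psi,\varphi)<\infty$ as its hypothesis, which is exactly what you are trying to establish. Two standard repairs exist. First, apply the sub-mean-value inequality \emph{inside} the defining integral of $B_\varphi(|\psi|^2)(z)$: for each fixed $z$ the integrand $|\psi(w)e^{\varphi(w)\overline{z}}|^2$ is subharmonic in $w$, so averaging over $B(w_0,1)$ and then choosing $z = \varphi(w_0)$ gives
\begin{equation*}
|\psi(w_0)|^2 e^{|\varphi(w_0)|^2 - (|w_0|+1)^2} \lesssim \sup_{z\in\mathbb C} B_\varphi(|\psi|^2)(z),
\end{equation*}
i.e.\ $M_{w_0}(\psi,\varphi) \lesssim e^{2|w_0|+1}$; this weakened bound still tolerates the growth argument behind Lemma \ref{Lemma-wco-bounded} (the $e^{2|w|}$ slack is negligible against the Gaussian weights), so $\varphi$ is affine with $|\lambda|\leq 1$, after which your bridge upgrades the estimate to $M(\psi,\varphi)<\infty$. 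Second, and cleaner, bypass $M$ entirely: your master identity says $\|W_{\psi,\varphi}f\|^2 = \int_{\mathbb C} |f(u)|^2 e^{-|u|^2}\,d\nu(u)$ for the pushforward $d\nu = \varphi_\ast\bigl(M_w\,\tfrac{dw}{\pi}\bigr)$, and one checks directly that $B_\varphi(|\psi|^2)(z) = \int_{\mathbb C} e^{-|z-u|^2}\,d\nu(u)$ holds for \emph{arbitrary} entire $\varphi$; then $(1'')\Rightarrow(1)$ is exactly the Fock--Carleson embedding theorem (bounded Gaussian averages of $\nu$ imply the Carleson embedding, proved by the same sub-mean-value estimate applied to $|f(u)|^2e^{-|u|^2}$), with no affineness needed, and its vanishing version handles $(2'')\Rightarrow(2)$ as well; the affine form then follows a posteriori from $(1)\Rightarrow(1')$ and Lemma \ref{Lemma-wco-bounded}. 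With either repair (and the trivial remark that $\psi \equiv 0$ makes everything degenerate), your proof is complete.
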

Note that (2') cannot be satisfied when $|\lambda| = 1$, hence $W_{\psi, \varphi}$ can only be compact when $|\lambda|< 1$.

We compute the Berezin transform, assuming that $W_{\psi, \varphi}$ is bounded: Then, $\varphi(w) = a + \lambda w$ and hence:
\begin{align}
    \widetilde{W_{\psi, \varphi}}(w,z) &= \langle W_{\psi, \varphi} k_w, k_z\rangle\notag \\
    &= e^{-\frac{|w|^2 + |z|^2}{2}}\langle \psi K_w(a + \lambda (\cdot)), K_z\rangle\notag \\
    &= e^{-\frac{|w|^2 + |z|^2}{2}} e^{a \cdot \overline{w}} \langle \psi K_{\overline{\lambda}w}, K_z\rangle\notag \\
    &= e^{-\frac{|w|^2 + |z|^2}{2}} e^{(a + \lambda z) \cdot \overline{w}} \psi(z) .
\end{align}
Hence, we get
\begin{align}
    \widetilde{W_{\psi, \varphi}}(z) = e^{(\lambda - 1)|z|^2 + a\cdot \overline z } \psi(z),\\
    K_{W_{\psi, \varphi}}(w,z) = e^{(a + \lambda z) \cdot \overline{w}} \cdot \psi(z).
\end{align}

We now discuss the following question: {\it When is $W_{\psi, \varphi} \in \mathcal T$?} Here are examples of operators $W_{\varphi, \psi}$ that are actually contained in $\mathcal T$:
\begin{proposition}\label{prop:compinT}
    Let $\psi, \varphi$ such that $W_{\psi, \varphi}$ is bounded on $F^2$. 
\begin{enumerate}
    \item If $|\lambda| = 1$, then $W_{\psi, \varphi} \in \mathcal T$ if and only if $\lambda = 1$.
    \item If $\lambda = 0$, then $W_{\psi, \varphi} \in \mathcal T$.
    \item If $0 < |\lambda| < 1$, then $W_{1, \varphi} \in \mathcal T$.
    \item If $0 < |\lambda| < 1$, then $M_{z}(\psi, \varphi) \to 0$ as $|z| \to \infty$ implies  $W_{\psi,\varphi} \in \mathcal T$.
\end{enumerate}
\end{proposition}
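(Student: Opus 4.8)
The plan is to separate the four cases according to the three tools available to us for membership in $\mathcal{T}$: the compactness criteria (recalling that $\mathcal{K}(F^2)\subset\mathcal{T}$ by Theorem \ref{thm:compactness}), the identification of Weyl operators as elements of $\mathcal{T}$, and the domination criterion of Proposition \ref{crit:Wiener_algebra}; for the single non-membership assertion I would invoke the contrapositive of Theorem \ref{thm:compactness}. I would organize the write-up so that the genuinely different ideas are visible: parts (2) and (4) reduce to compactness, part (3) illustrates the Wiener criterion, and part (1) splits into an explicit Weyl-operator computation and a ``vanishing Berezin transform but non-compact'' obstruction.

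First, parts (2) and (4). When $\lambda=0$ the symbol $\varphi\equiv a$ is constant, so $W_{\psi,\varphi}f=f(a)\psi=\langle f,K_a\rangle\psi$ is the rank-one operator $\psi\otimes K_a$ (also recorded in Theorem \ref{Theorem-properties-weighted-composition-operators}); being compact it lies in $\mathcal{T}$ because $\mathcal{K}(F^2)\subset\mathcal{T}$. For (4), the hypothesis $M_z(\psi,\varphi)\to 0$ is exactly condition (2$'$) of Theorem \ref{Theorem-properties-weighted-composition-operators}, which is equivalent to compactness of $W_{\psi,\varphi}$; hence again $W_{\psi,\varphi}\in\mathcal{K}(F^2)\subset\mathcal{T}$. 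I note in passing that (3) is in fact a special case of (4): for $\psi\equiv 1$ one computes $M_z(1,\varphi)=e^{|a+\lambda z|^2-|z|^2}$, whose exponent behaves like $(|\lambda|^2-1)|z|^2\to-\infty$, so $W_{1,\varphi}$ is compact. Nevertheless I would give the direct Wiener-criterion proof of (3) below, since it illustrates Proposition \ref{crit:Wiener_algebra} and does not pass through compactness.

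Next, part (1). If $|\lambda|=1$ then Lemma \ref{Lemma-wco-bounded} forces $\psi(w)=\psi(0)e^{-\overline{a}\lambda w}$ with $\psi(0)\neq 0$. When $\lambda=1$, I would substitute this into the definition and compare with $W_{-a}f(w)=e^{-\overline a w-|a|^2/2}f(w+a)$ to obtain $W_{\psi,\varphi}=\psi(0)e^{|a|^2/2}\,W_{-a}$; since $W_{-a}\in\mathcal{T}$ (the Weyl operators are Toeplitz operators, as recalled in Section \ref{sec:Weyl}), so is its scalar multiple. When $|\lambda|=1$ but $\lambda\neq 1$, the plan is to apply the contrapositive of Theorem \ref{thm:compactness}. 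From the formula for the Berezin transform, $|\widetilde{W_{\psi,\varphi}}(z)|=|\psi(0)|\,e^{\re(\lambda-1)|z|^2+(\text{linear in }z)}$, and since $\re(\lambda-1)=\re\lambda-1<0$ the Gaussian factor forces $\widetilde{W_{\psi,\varphi}}\in C_0(\mathbb{C})$. On the other hand $W_{\psi,\varphi}$ is bounded, nonzero and not compact (condition (2$'$) fails for $|\lambda|=1$, as noted after Theorem \ref{Theorem-properties-weighted-composition-operators}). Were $W_{\psi,\varphi}\in\mathcal{T}$, Theorem \ref{thm:compactness} together with $\widetilde{W_{\psi,\varphi}}\in C_0(\mathbb{C})$ would make it compact, a contradiction; hence $W_{\psi,\varphi}\notin\mathcal{T}$.

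Finally, part (3) via the Wiener criterion. With $\psi\equiv 1$ the bivariate Berezin transform computed above gives $|\widetilde{W_{1,\varphi}}(w,z)|=\exp\bigl(-\tfrac12(|w|^2+|z|^2)+\re(a\overline{w})+\re(\lambda z\overline{w})\bigr)$. Writing $r=|\lambda|<1$ and completing the square, I would bound the quadratic part using $\re(\lambda z\overline w)\le r|w||z|\le\tfrac{r}{2}(|w|^2+|z|^2)$ to get $-\tfrac{1-r}{2}(|w|^2+|z|^2)$, then use $|w|^2+|z|^2\ge\tfrac12|w-z|^2$ while reserving a fraction of the negative quadratic to dominate the linear term $\re(a\overline w)\le|a||w|$ via $-\tfrac{1-r}{4}|w|^2+|a||w|\le\tfrac{|a|^2}{1-r}$. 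This yields a Gaussian bound $|\widetilde{W_{1,\varphi}}(w,z)|\le C\,e^{-c|w-z|^2}$ with $c=\tfrac{1-r}{8}>0$, and since $u\mapsto Ce^{-c|u|^2}$ lies in $L^1(\mathbb{C})$, Proposition \ref{crit:Wiener_algebra} gives $W_{1,\varphi}\in\mathcal{T}$. The main obstacle is bookkeeping rather than conceptual: in part (3) one must split the negative quadratic form carefully so that it simultaneously absorbs the linear term and leaves enough decay in $|w-z|$; and in part (1) the key realization is that for $\lambda\neq 1$ on the unit circle the Berezin transform actually vanishes at infinity (because $\re\lambda<1$), so the non-membership follows purely from the clash between this decay and non-compactness.
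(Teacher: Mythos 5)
Your proposal is correct, and for parts (1), (2) and (4) it follows essentially the same route as the paper: (2) and (4) reduce to compactness plus $\mathcal K(F^2)\subset\mathcal T$, and (1) splits into the Weyl-operator identification for $\lambda=1$ and, for $|\lambda|=1$, $\lambda\neq 1$, the clash between non-compactness (condition (2$'$) failing since $M_z(\psi,\varphi)$ is a positive constant) and $\widetilde{W_{\psi,\varphi}}\in C_0(\mathbb C)$ via Theorem \ref{thm:compactness} — exactly the paper's argument. Two points of genuine difference are worth noting. First, in the $\lambda=1$ case your explicit computation $W_{\psi,\varphi}=\psi(0)e^{|a|^2/2}W_{-a}$ is actually more precise than the paper's loosely stated ``$W_{\psi,\varphi}=\psi(0)W_a$'' (a check against $\psi=k_{-a}$, where the operator must equal $W_{-a}$, confirms your constant and sign); both versions of course yield the same conclusion. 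Second, for part (3) the paper simply observes that $M_z(1,\varphi)=e^{(|\lambda|^2-1)|z|^2+|a|^2+2\re(\lambda z\overline a)}\in C_0(\mathbb C)$, so $W_{1,\varphi}$ is compact and hence in $\mathcal T$ — i.e.\ the paper treats (3) exactly as the special case of (4) that you mention in passing. Your alternative proof via Proposition \ref{crit:Wiener_algebra} is a genuinely different route: the Gaussian domination $|\widetilde{W_{1,\varphi}}(w,z)|\leq C e^{-\frac{1-r}{8}|w-z|^2}$ (your square-completion bookkeeping is correct, with $C=e^{|a|^2/(1-r)}$) establishes membership in $\mathcal T$ without passing through compactness at all, which is conceptually more robust — it is the same technique the paper itself deploys later for Hausdorff operators — though here it buys nothing extra, since the operator happens to be compact anyway. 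The only caveat, which affects the paper equally: the ``only if'' in part (1) tacitly excludes $\psi\equiv 0$ (the zero operator lies in $\mathcal T$ for every $\lambda$); your appeal to Lemma \ref{Lemma-wco-bounded} with $\psi(0)\neq 0$ implicitly makes this assumption, just as the paper's parenthetical ``unless it is zero'' does.
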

\begin{proof} 
\begin{enumerate}
    \item For $\lambda = 1$, then $W_{\psi, \varphi}= \psi(0) W_a$ is zero or a Weyl operator up to a factor, which is well-known to be in $\mathcal T$. If $|\lambda| = 1$ with $\lambda \neq 1$, then $W_{\psi, \varphi}$ is not compact (unless it is zero), but  $\widetilde{W_{\psi, \varphi}} \in C_0(\mathbb C)$, showing that the operator cannot be in $\mathcal T$.
    \item If $\lambda = 0$, then the operator is compact, hence in $\mathcal T$.
    \item If $0 < |\lambda| < 1$, then 
    \begin{align*}
        M_z(1, \varphi) = e^{(|\lambda|^2-1)|z|^2 + |a|^2 +2\operatorname{Re}(\lambda z \overline{a})} \in C_0(\mathbb C)
    \end{align*}
    as a function of $z$, hence the operator is compact. Therefore, $W_{1, \varphi} \in \mathcal T$. 
    \item Just as in the previous point, the assumption provides compactness of the operator, hence it is contained in $\mathcal T$. 
\end{enumerate}
\end{proof}

We observe that $W_{\psi, \varphi}$ with $|\lambda| = 1$ is never compact unless $W_{\psi, \varphi}=0$. For $\lambda = 0$, we have $W_{\psi, \varphi}f(w) = f(a) \psi(w)$, which is of rank one (or zero). In particular, $W_{\psi, \varphi} \in \mathcal K(F^2) \subset \mathcal T$. Hence, we in principle only need to consider $0 < |\lambda| < 1$. Nevertheless, we will now derive a more general result, which will show that the list in Proposition \ref{prop:compinT} is actually complete. 

Let $A \in \mathcal L(F^2)$ be \emph{sufficiently localized}, meaning that
\begin{align*}
    |\langle A k_z, k_w\rangle|\leq \frac{C}{(1+|z-w|)^\beta}
\end{align*}
for some $C > 0$ and $\beta > 2$ and every $z, w \in \mathbb C$. It is well-known that such operators are dense in $\mathcal T$ \cite{Xia2015}. Further, every Toeplitz operator is sufficiently localized. Clearly, $A^\ast$ is again sufficiently localized. We note that the following argument is similar to the statement and proof of \cite[Theorem 3]{Coburn2001}.

For such $A$ and $W_{\psi, \varphi}$ bounded with $\lambda \neq 1$ we estimate (using also Eq.\ \eqref{eq:mz}) for all $z \in \mathbb{C}$: 
\begin{align}\label{GL_estimate-difference_TO_and_weighted_composition_operator}
    \| W_{\psi, \varphi}\| \| A - W_{\psi, \varphi}\| &\geq \left |\langle A^\ast k_z, W_{\psi, \varphi}^\ast k_z\rangle - \langle W_{\psi, \varphi}^\ast k_z, W_{\psi, \varphi}^\ast k_z\rangle \right |\\
    &= \left | \langle A^\ast k_z, W_{\psi, \varphi}^\ast k_z \rangle - M_z(\psi, \varphi) \right |. \notag
\end{align}
We assume that $W_{\psi, \varphi}$ is bounded such that $\varphi(z) = a+\lambda z$ with $|\lambda| \leq 1, ~\lambda \neq 1$ (see Lemma \ref{Lemma-wco-bounded} and Theorem \ref{Theorem-properties-weighted-composition-operators}). Then, for $|z|$ sufficiently large, we always have $z \neq a + \lambda z$. Hence, using Eq.\ \eqref{eq:w_adjoint}, it follows with a suitable constant $C>0$: 
\begin{align}\label{Estimate-Toeplitz-weighted-comp}
    |\langle A^\ast k_z, W_{\psi, \varphi}^\ast k_z\rangle| &= |\psi(z)| e^{-\frac{|z|^2}{2}} |\langle A^\ast k_z, K_{\varphi(z)}\rangle| \notag \\
    &\leq |\psi(z)| e^{-\frac{|z|^2}{2} + \frac{|\varphi(z)|^2}{2}} \frac{C}{(1+|z-\varphi(z)|)^\beta} \notag \\
    &= |\psi(z)| e^{\frac{1}{2}(|\varphi(z)|^2 - |z|^2)} \frac{C}{(1 + |(1-\lambda)z - a|)^\beta}.
\end{align}
Now, the first factor of this product is bounded by Theorem \ref{Theorem-properties-weighted-composition-operators}, (1), as $W_{\psi, \varphi}$ is assumed bounded. The second factor clearly converges to zero as $z \to \infty$ (since we assumed $\lambda \neq 1$). Hence, we obtain that for each $A$ sufficiently localized and $W_{\psi, \varphi}$ bounded with $|\lambda| \leq 1, ~\lambda \neq 1$, the following holds true:
\begin{align*}
    |\langle A^\ast k_z, W_{\psi, \varphi}^\ast k_z\rangle| \to 0,  \quad \textup{as} \quad z \to \infty.
\end{align*}
We conclude with the following theorem:
\begin{theorem}\label{Theorem-distance-from-T-algebra}
Let $\varphi(w)= a+ \lambda w$ with $a \in \mathbb{C}$ and $\lambda \in \mathbb{C}$ with $|\lambda| \leq 1, ~\lambda \neq 1$. If $0 \ne W_{\psi, \varphi}$ is a bounded operator on $F^2$, then: 
\begin{equation*}
\textup{dist } \big{(} W_{\psi, \varphi}, \mathcal{T} \big{)} := \inf \Big{\{} \| W_{\psi, \varphi} - A \big{\|} \: : \: A \in \mathcal{T} \Big{\}} 
\geq \frac{1}{\|W_{\psi, \varphi}\|} \limsup_{|z| \rightarrow \infty} M_z(\psi, \varphi).  
\end{equation*}
\end{theorem}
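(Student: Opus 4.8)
The plan is to combine the two estimates already assembled before the statement with the density of sufficiently localized operators in $\mathcal{T}$; once these ingredients are in place the argument is essentially formal. The backbone is inequality \eqref{GL_estimate-difference_TO_and_weighted_composition_operator}, which comes from Cauchy--Schwarz applied to $\langle (A - W_{\psi, \varphi})^\ast k_z, W_{\psi, \varphi}^\ast k_z\rangle$ together with $\|k_z\| = 1$ and the identity $M_z(\psi, \varphi) = \|W_{\psi, \varphi}^\ast k_z\|^2$ from \eqref{eq:mz}. It reads, for every bounded $A$ and every $z \in \mathbb{C}$,
\[
  \|W_{\psi, \varphi}\|\,\|A - W_{\psi, \varphi}\| \;\geq\; \bigl| \langle A^\ast k_z, W_{\psi, \varphi}^\ast k_z\rangle - M_z(\psi, \varphi)\bigr|,
\]
and crucially its left-hand side does not depend on $z$.

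First I would restrict $A$ to the \emph{sufficiently localized} operators. For such $A$ the estimate \eqref{Estimate-Toeplitz-weighted-comp} has already shown that $\alpha_z := \langle A^\ast k_z, W_{\psi, \varphi}^\ast k_z\rangle \to 0$ as $z \to \infty$: its bounding prefactor equals $\sqrt{M_z(\psi, \varphi)}$, which stays bounded because $W_{\psi, \varphi}$ is bounded (Theorem \ref{Theorem-properties-weighted-composition-operators}), while the localization factor decays precisely because $\lambda \neq 1$. Since $M_z(\psi, \varphi) \geq 0$, the reverse triangle inequality gives $|\alpha_z - M_z(\psi, \varphi)| \geq M_z(\psi, \varphi) - |\alpha_z|$ for every $z$. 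Taking $\limsup_{|z|\to\infty}$ in the displayed inequality and using $|\alpha_z| \to 0$, I obtain
\[
  \|W_{\psi, \varphi}\|\,\|A - W_{\psi, \varphi}\| \;\geq\; \limsup_{|z|\to\infty}\bigl(M_z(\psi, \varphi) - |\alpha_z|\bigr) \;=\; \limsup_{|z|\to\infty} M_z(\psi, \varphi).
\]
Dividing by $\|W_{\psi, \varphi}\| > 0$ (allowed since $W_{\psi, \varphi} \neq 0$) yields the claimed lower bound for every sufficiently localized $A$.

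It then remains to pass from sufficiently localized operators to all of $\mathcal{T}$. Here I would invoke that the sufficiently localized operators are norm-dense in $\mathcal{T}$, as recalled before the statement following \cite{Xia2015}. Density upgrades the previous paragraph to an equality of infima: for any $A \in \mathcal{T}$ and $\varepsilon > 0$ there is a sufficiently localized $A'$ with $\|A - A'\| < \varepsilon$, whence $\|W_{\psi, \varphi} - A'\| \leq \|W_{\psi, \varphi} - A\| + \varepsilon$, so the infimum over sufficiently localized operators coincides with $\mathrm{dist}(W_{\psi, \varphi}, \mathcal{T})$. Consequently the uniform lower bound survives the passage to the infimum, which is exactly the assertion.

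The analytic heart of the matter --- the decay $\alpha_z \to 0$, which simultaneously uses sufficient localization and the hypothesis $\lambda \neq 1$ --- has been dispatched before the statement, so I do not expect a genuine obstacle. The only points demanding care are the interchange of $\limsup$ with the reverse triangle inequality (legitimate because $|\alpha_z|$ converges, so its contribution to the $\limsup$ is just $0$) and phrasing the density step as an \emph{equality} of distances rather than a mere one-sided inequality.
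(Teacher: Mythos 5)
Your proof is correct and follows exactly the paper's own argument: the paper's proof consists precisely of combining \eqref{GL_estimate-difference_TO_and_weighted_composition_operator} and \eqref{Estimate-Toeplitz-weighted-comp} with the density of sufficiently localized operators in $\mathcal{T}$. You have merely spelled out the routine details (the reverse triangle inequality, the $\limsup$ interchange, and the $\varepsilon$-approximation step) that the paper leaves implicit, and each of these is handled correctly.
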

\begin{proof}
The statement immediately follows by combining \eqref{GL_estimate-difference_TO_and_weighted_composition_operator} and \eqref{Estimate-Toeplitz-weighted-comp}, as well as the fact that sufficiently localized operators are dense in $\mathcal T$. 
\end{proof} 
\noindent
\begin{corollary}
Let $\varphi(w)= a+\lambda w$ with $|\lambda| \leq 1, ~\lambda \neq 1$ and $a \in \mathbb{C}$. Assume that $W_{\psi, \varphi}$ is bounded on $F^2$. If $W_{\psi, \varphi}$ is not compact, then $W_{\psi, \varphi} \notin \mathcal{T}$.
\end{corollary}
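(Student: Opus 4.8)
The plan is to argue by contraposition, converting the compactness characterization of Theorem~\ref{Theorem-properties-weighted-composition-operators} into a clean dichotomy and then excluding membership in $\mathcal T$ by means of the distance lower bound of Theorem~\ref{Theorem-distance-from-T-algebra}. Concretely, I would assume $W_{\psi, \varphi} \in \mathcal T$ and deduce that $W_{\psi, \varphi}$ is necessarily compact; the contrapositive is exactly the asserted statement.

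First I would note that the hypothesis that $W_{\psi, \varphi}$ is not compact in particular rules out the zero operator, so $0 \neq W_{\psi, \varphi}$, and the remaining hypotheses of Theorem~\ref{Theorem-distance-from-T-algebra} (namely $\varphi(w) = a + \lambda w$ with $|\lambda| \leq 1$ and $\lambda \neq 1$) are assumed here; hence that theorem applies. Next, since $\mathcal T$ is a $C^\ast$-algebra and thus norm-closed, the assumption $W_{\psi, \varphi} \in \mathcal T$ is equivalent to $\operatorname{dist}(W_{\psi, \varphi}, \mathcal T) = 0$.

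Feeding this into the lower bound of Theorem~\ref{Theorem-distance-from-T-algebra} yields
\[
0 = \operatorname{dist}(W_{\psi, \varphi}, \mathcal T) \geq \frac{1}{\|W_{\psi, \varphi}\|}\limsup_{|z| \to \infty} M_z(\psi, \varphi) \geq 0,
\]
so that $\limsup_{|z| \to \infty} M_z(\psi, \varphi) = 0$. Because $M_z(\psi, \varphi) = \|W_{\psi, \varphi}^\ast k_z\|^2 \geq 0$ by \eqref{eq:mz}, the vanishing of the limit superior forces $M_z(\psi, \varphi) \to 0$ as $|z| \to \infty$, which is precisely condition~(2') of Theorem~\ref{Theorem-properties-weighted-composition-operators}. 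Invoking the equivalence (2)$\Leftrightarrow$(2') there (legitimate since $\varphi$ is affine with $|\lambda| \leq 1$) then gives that $W_{\psi, \varphi}$ is compact, contradicting the hypothesis.

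I do not expect a genuine obstacle: all the analytic content, namely the sufficiently-localized-operator estimate \eqref{Estimate-Toeplitz-weighted-comp} and the density of such operators in $\mathcal T$, has already been absorbed into Theorem~\ref{Theorem-distance-from-T-algebra}. The only points demanding care are bookkeeping ones: verifying that the hypotheses of that theorem hold (a nonzero operator and $\lambda \neq 1$), and observing that a nonnegative quantity with vanishing limit superior genuinely tends to $0$, so that criterion~(2') is satisfied in full rather than merely along a subsequence.
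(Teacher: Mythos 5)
Your proposal is correct and is essentially the paper's own argument: the paper likewise combines the distance bound of Theorem \ref{Theorem-distance-from-T-algebra} with the equivalence (2)$\Leftrightarrow$(2') of Theorem \ref{Theorem-properties-weighted-composition-operators}, merely phrased directly ($W_{\psi,\varphi}$ not compact $\Rightarrow \limsup_{|z|\to\infty} M_z(\psi,\varphi) > 0 \Rightarrow \operatorname{dist}(W_{\psi,\varphi},\mathcal T) > 0$) rather than by contraposition. Your extra bookkeeping --- noncompact implies nonzero, and a nonnegative quantity with vanishing $\limsup$ tends to $0$ --- is sound and matches what the paper leaves implicit.
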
 
\begin{proof}
Since  $\limsup_{|z| \rightarrow \infty} M_z(\psi, \varphi)>0$ if $W_{\psi, \varphi}$ is not compact (according to Theorem \ref{Theorem-properties-weighted-composition-operators}) 
the statement is a direct consequence of Theorem \ref{Theorem-distance-from-T-algebra}. 
\end{proof}
Combining the previous corollary with Proposition \ref{prop:compinT}, we therefore arrive at the full characterization of all weighted composition operators in $\mathcal T$:
\begin{corollary}\label{cor:weighted composition op in Toeplitz algebra}
    Let $W_{\psi, \varphi} \in \mathcal T$. Then, $W_{\psi, \varphi}$ is either compact or a constant multiple of a Weyl operator.
\end{corollary}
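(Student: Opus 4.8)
The plan is to reduce everything to the affine normal form for $\varphi$ and then run a short case distinction, leaning on the results already assembled above. Since $\mathcal T \subset \mathcal L(F^2)$, the hypothesis $W_{\psi, \varphi} \in \mathcal T$ already forces $W_{\psi, \varphi}$ to be bounded, so Lemma~\ref{Lemma-wco-bounded} applies and yields $\varphi(w) = a + \lambda w$ with $|\lambda| \leq 1$. Everything then hinges on the value of $\lambda$.

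First I would dispose of the case $\lambda = 1$. Here Lemma~\ref{Lemma-wco-bounded} additionally pins down the multiplier as $\psi(w) = \psi(0) e^{-\overline{a} w}$, and, as recorded in the proof of Proposition~\ref{prop:compinT}(1), this makes $W_{\psi, \varphi}$ a scalar multiple of the Weyl operator $W_a$. Thus in this case $W_{\psi, \varphi}$ is a constant multiple of a Weyl operator, which is one of the two alternatives in the conclusion. For the complementary case $|\lambda| \leq 1$ with $\lambda \neq 1$ (which in particular subsumes $\lambda = 0$), I would invoke the preceding corollary in contrapositive form: it asserts that a bounded $W_{\psi, \varphi}$ with $\lambda \neq 1$ that fails to be compact cannot lie in $\mathcal T$. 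Since by hypothesis $W_{\psi, \varphi} \in \mathcal T$, it must therefore be compact, which is the other alternative. Combining the two cases yields the claimed dichotomy.

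I expect no genuine obstacle here, since the proof is essentially bookkeeping. The substantive content sits upstream: in Theorem~\ref{Theorem-distance-from-T-algebra} and the decay estimate \eqref{Estimate-Toeplitz-weighted-comp} that feed the preceding corollary, together with the compactness characterization of Theorem~\ref{thm:compactness} underlying it. The only points requiring care are confirming that boundedness is automatic from membership in $\mathcal T$ (so that Lemma~\ref{Lemma-wco-bounded} is applicable at all), and making sure the case split is exhaustive. In particular I would check that $\lambda = 0$ is correctly absorbed into the $\lambda \neq 1$ branch: there $W_{\psi, \varphi}$ is of rank one and hence compact, consistently with the stated conclusion and with Proposition~\ref{prop:compinT}(2).
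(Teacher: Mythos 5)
Your proof is correct and follows essentially the same route as the paper, which obtains the corollary by combining the preceding corollary (non-compact $W_{\psi,\varphi}$ with $\lambda \neq 1$ lies outside $\mathcal T$) with Proposition~\ref{prop:compinT}, where the $\lambda = 1$ case is identified as $\psi(0)W_a$ via Lemma~\ref{Lemma-wco-bounded}. Your added bookkeeping (boundedness from membership in $\mathcal T$, exhaustiveness of the case split, absorption of $\lambda = 0$) is accurate and matches what the paper leaves implicit.
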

\begin{question}
Since every Weyl operator is a Toeplitz operator, the only question left open by the above result is the following: When is a compact weighted composition operator $W_{\psi, \varphi}$ a Toeplitz operator?
\end{question}
We now discuss a sufficient criterion to this problem. To this end, we calculate the heat transform of the Berezin transform of $W_{\psi, \varphi}$. Let $t>0$ and consider: 
\begin{align*}
\Big{(} \widetilde{W_{\psi, \varphi}} \Big{)}^{\sim (t)}(w):
&= \frac{1}{\pi t} \int_{\mathbb{C}^n} \widetilde{W_{\psi, \varphi}}(z) e^{-\frac{|z-w|^2}{t}} dw\\
&= \frac{1}{\pi t} e^{-\frac{|w|^2}{t}} \int_{\mathbb{C}} e^{-(1-\lambda+\frac{1}{t})|z|^2+ a \overline{z} + \frac{z\overline{w}}{t} + \frac{\overline{z}w}{t}} \psi(z) dz=(*).
\end{align*}
Put $\rho_t:= (1-\lambda+\frac{1}{t})$ and for the moment we assume that $\lambda$ is real and $\rho_t>0$. By the transformation rule for the integral: 
\begin{align*}
(*)
&= \frac{1}{\pi \rho_t t} e^{-\frac{|w|^2}{t}} \int_{\mathbb{C}} e^{-|z|^2+ \frac{1}{t\sqrt{\rho_t}}(\overline{z}(ta+w)+ z \overline{w})}\psi \left( \frac{z}{\sqrt{\rho_t}}\right) dz\\
&=\frac{1}{\rho_t t} e^{-\frac{|w|^2}{t}+ \frac{\overline{w}(ta+w)}{t^2\rho_t}} \psi \left( \frac{ta+w}{t \rho_t} \right) \\
&=\frac{1}{t(1-\lambda)+1} e^{- \frac{1- \lambda}{t(1-\lambda)+1}|w|^2+ \frac{a \overline{w}}{t(1-\lambda)+1}}\psi \left( \frac{ta+w}{t(1-\lambda)+1}\right). 
\end{align*}
Since this expression is analytic in $\lambda$, we conclude by the identity theorem for holomorphic functions that the equality
\begin{equation}\label{GL_equality_Berezin_transform}
\Big{(} \widetilde{W_{\psi, \varphi}} \Big{)}^{(t)}(w)=\frac{1}{t(1-\lambda)+1} e^{- \frac{1- \lambda}{t(1-\lambda)+1}|w|^2+\frac{a \overline{w}}{t(1-\lambda)+1}}\psi \left( \frac{ta+w}{t(1-\lambda)+1}\right) 
\end{equation}
extends to the case $\lambda \in \mathbb{C}$ and $|\lambda|<1$ as long as $t>0$, at least for small values of $t$. On the right hand side and in 
the case $\lambda \ne 0$ we can (formally) choose $t=-1$: 
\begin{equation*}
\Big{(} \widetilde{W_{\psi, \varphi}} \Big{)}^{(-1)}(w):=\frac{1}{\lambda} e^{\frac{\lambda-1}{\lambda}|w|^2+ \frac{a \overline{w}}{\lambda}} \psi \left( \frac{w-a}{\lambda} \right)=:f_{\psi, \varphi}(w). 
\end{equation*}
Assume now that 
\begin{equation}\label{GL_equality_Berezin_transform-2}
\textup{Re} \left(\frac{\lambda-1}{\lambda}\right)\leq 0 \hspace{4ex} \Longleftrightarrow \hspace{4ex}  \textup{Re}(\lambda) \geq |\lambda|^2 
\end{equation}
and $\psi$ is such that the right hand side of (\ref{GL_equality_Berezin_transform-2}) is bounded (e.g., $\psi$ is a holomorphic polynomial). Then we have 
\begin{equation*}
W_{\psi, \varphi}= T_{f_{\psi, \varphi}} 
\end{equation*}
since the Berezin transforms of both sides coincide. Indeed, the equality
\begin{align*}
    \widetilde{f_{\psi, \varphi}}(z) = e^{(\lambda -1)|z|^2 + a\overline{z}} \psi(z)
\end{align*}
is readily verified for real $\lambda$, followed again by an application of the identity theorem. Now, observing that the condition $\operatorname{Re}(\lambda) \geq |\lambda|^2$ describes exactly the closed disc of all $\lambda \in \mathbb C$ such that $|\lambda-\frac{1}{2}| \leq \frac{1}{2}$, we have proven:
\begin{theorem}\label{thm:compequalstoeplitz}
    Assume that $\varphi(z) = a+\lambda z$ with $|\lambda - \frac{1}{2}| \leq \frac{1}{2}, \lambda \neq 0, 1$ such that
    \begin{align*}
        f_{\psi, \varphi}(w) = \frac{1}{\lambda} e^{\frac{\lambda-1}{\lambda}|w|^2+ \frac{a \overline{w}}{\lambda}} \psi \left( \frac{w-a}{\lambda} \right)
    \end{align*}
    is a bounded function. Then, $W_{\psi, \varphi} = T_{f_{\psi, \varphi}}$.
\end{theorem}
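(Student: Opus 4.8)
The plan is to rest the whole argument on the injectivity of the Berezin transform on bounded operators. Both sides of the desired identity are bounded operators (the operator $W_{\psi,\varphi}$ is bounded by Theorem~\ref{Theorem-properties-weighted-composition-operators}, and $T_{f_{\psi,\varphi}}$ is bounded because $f_{\psi,\varphi}\in L^\infty(\mathbb{C})$ by hypothesis, so that $\|T_{f_{\psi,\varphi}}\|_{\mathrm{op}}\le\|f_{\psi,\varphi}\|_\infty$), hence it suffices to show that they carry the same Berezin transform. The Berezin transform of $W_{\psi,\varphi}$ has already been computed to be $\widetilde{W_{\psi,\varphi}}(z)=e^{(\lambda-1)|z|^2+a\overline z}\psi(z)$, so the entire theorem reduces to verifying the single scalar identity
\begin{equation*}
\widetilde{f_{\psi,\varphi}}(z)=e^{(\lambda-1)|z|^2+a\overline z}\,\psi(z),\qquad z\in\mathbb{C}.
\end{equation*}

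First I would write the Berezin transform as the Gaussian integral $\widetilde{f_{\psi,\varphi}}(z)=\frac1\pi\int_{\mathbb{C}}f_{\psi,\varphi}(v)\,e^{-|v-z|^2}\,dv$, which is nothing but the time-one heat transform of $f_{\psi,\varphi}$, and substitute the explicit formula for $f_{\psi,\varphi}$. For \emph{real} $\lambda\in(0,1)$ the exponent $\tfrac{\lambda-1}{\lambda}|v|^2$ is strictly negative, so the integrand decays and one is left with an ordinary complex Gaussian integral: completing the square in $v$ and changing variables by $v\mapsto(v-a)/\lambda$ should return exactly the right-hand side. Conceptually this is the computation displayed just before the statement, read backwards — the symbol $f_{\psi,\varphi}$ was manufactured as the formal value at $t=-1$ of $\bigl(\widetilde{W_{\psi,\varphi}}\bigr)^{(t)}$ in \eqref{GL_equality_Berezin_transform}, and the semigroup property of the heat transform predicts that reapplying the time-one transform undoes this and recovers $\widetilde{W_{\psi,\varphi}}$.

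With the identity in hand for real $\lambda$, I would propagate it to all admissible complex $\lambda$ by the identity theorem for holomorphic functions, just as in the passage from \eqref{GL_equality_Berezin_transform} onwards: for each fixed $z$, both sides are holomorphic in $\lambda$ on the open disc $|\lambda-\tfrac12|<\tfrac12$ — where $\operatorname{Re}\tfrac{\lambda-1}{\lambda}<0$ ensures absolute convergence of the defining integral and, by a dominated-convergence estimate uniform on compact $\lambda$-sets, its holomorphy in the parameter — and they already agree on the real segment of the disc, hence they agree throughout. Combining $\widetilde{f_{\psi,\varphi}}=\widetilde{W_{\psi,\varphi}}$ with the injectivity of the Berezin transform then yields $T_{f_{\psi,\varphi}}=W_{\psi,\varphi}$.

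The part I expect to be delicate is not the Gaussian bookkeeping but the analytic-continuation step: one must genuinely verify that the parameter-dependent integral defining $\widetilde{f_{\psi,\varphi}}(z)$ is holomorphic in $\lambda$, which forces one to remain inside the open disc where the integrand decays and to supply a dominating function uniform on compact subsets. The boundary circle $\operatorname{Re}(\lambda)=|\lambda|^2$, where the Gaussian factor of $f_{\psi,\varphi}$ has modulus one and the integral converges only marginally, is the genuinely borderline case; here one leans on the hypothesis that $f_{\psi,\varphi}$ is bounded (so that $T_{f_{\psi,\varphi}}$ and its Berezin transform are defined in the first place) together with a continuity argument in $\lambda$ up to the boundary.
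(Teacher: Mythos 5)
Your proposal is correct and follows essentially the same route as the paper: one verifies the scalar identity $\widetilde{f_{\psi,\varphi}}(z)=e^{(\lambda-1)|z|^2+a\overline{z}}\,\psi(z)=\widetilde{W_{\psi,\varphi}}(z)$ by a direct Gaussian computation for real $\lambda$, extends it in the parameter $\lambda$ by the identity theorem, and concludes from the injectivity of the Berezin transform on bounded operators. If anything you are more careful than the paper, which leaves the holomorphy-in-$\lambda$ estimates and the boundary circle $\operatorname{Re}(\lambda)=|\lambda|^2$ implicit; your flagged continuity-up-to-the-boundary step (or, equivalently, observing that boundedness of $f_{\psi,\varphi}$ forces $|\psi(u)|\lesssim e^{(\operatorname{Re}\lambda-|\lambda|^2)|u|^2+C|u|}$, so the heat integral is holomorphic in $\lambda$ on an open half-plane containing the boundary point) fills exactly the detail the paper glosses over.
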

In general, compactness of $W_{\psi, \varphi}$ is not characterized by the property that the Berezin transform vanishes at infinity. From the above theorem, we obtain the following consequence:
\begin{corollary}
    Assume that $\varphi(z) = a+\lambda z$ with $|\lambda - \frac{1}{2}| \leq \frac{1}{2}, \lambda \neq 0, 1$ and $\psi \in F^2$ such that
    \begin{align*}
        e^{\frac{\lambda-1}{\lambda}|w|^2+ \frac{a \overline{w}}{\lambda}} \psi \left( \frac{w-a}{\lambda} \right)
    \end{align*}
    is a bounded function of $w$. Then, $W_{\psi, \varphi}$ is compact if and only if $\widetilde{W_{\psi, \varphi}}(z) \to 0$ as $z \to \infty$.
\end{corollary}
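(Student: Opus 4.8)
The plan is to reduce the statement to the general compactness characterization of Theorem~\ref{thm:compactness} by first identifying $W_{\psi, \varphi}$ as a genuine Toeplitz operator via Theorem~\ref{thm:compequalstoeplitz}. The key observation is that the hypothesis of the corollary is precisely the boundedness hypothesis of Theorem~\ref{thm:compequalstoeplitz}: since $f_{\psi, \varphi}(w) = \frac{1}{\lambda} e^{\frac{\lambda - 1}{\lambda}|w|^2 + \frac{a \overline w}{\lambda}} \psi\big(\frac{w-a}{\lambda}\big)$ and $\lambda \neq 0$, the symbol $f_{\psi, \varphi}$ is bounded if and only if the expression $e^{\frac{\lambda - 1}{\lambda}|w|^2 + \frac{a \overline w}{\lambda}} \psi\big(\frac{w-a}{\lambda}\big)$ appearing in the corollary is bounded. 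Thus, under the stated hypotheses on $\lambda$ (which encode $|\lambda - \frac{1}{2}| \leq \frac{1}{2}$, i.e.\ $\re(\lambda) \geq |\lambda|^2$) and on $\psi$, Theorem~\ref{thm:compequalstoeplitz} applies and yields the identity $W_{\psi, \varphi} = T_{f_{\psi, \varphi}}$.

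First I would record that, being a Toeplitz operator with a bounded symbol, $W_{\psi, \varphi} = T_{f_{\psi, \varphi}}$ lies in the Toeplitz algebra $\mathcal T$ by definition, \emph{independently} of whether it is compact. This is the crucial input: the remark preceding the corollary warns that, for general weighted composition operators, compactness is \emph{not} detected by the vanishing of the Berezin transform at infinity. The point is that the missing ingredient in the compactness criterion of Theorem~\ref{thm:compactness} is exactly membership in $\mathcal T$, and the hypotheses here supply it.

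Second, I would invoke Theorem~\ref{thm:compactness} directly: since $W_{\psi, \varphi} \in \mathcal T$, that theorem asserts that $W_{\psi, \varphi} \in \mathcal K(F^2)$ if and only if $\widetilde{W_{\psi, \varphi}} \in C_0(\mathbb C)$. To finish, I would note that the Berezin transform of any bounded operator is a (uniformly) continuous function on $\mathbb C$, as recorded in Section~2; hence $\widetilde{W_{\psi, \varphi}} \in C_0(\mathbb C)$ is equivalent to the single requirement that $\widetilde{W_{\psi, \varphi}}(z) \to 0$ as $z \to \infty$. Chaining these two equivalences gives exactly the claimed criterion.

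I do not anticipate a serious obstacle of a computational nature; the proof is a short deduction from the two cited theorems. The only point demanding care is the conceptual one just highlighted: one must not attempt to prove the equivalence by a direct estimate on $W_{\psi, \varphi}$ (which would fail, as the preceding remark shows), but rather route the argument through the identification $W_{\psi, \varphi} = T_{f_{\psi, \varphi}}$ so that the operator is known a priori to sit inside $\mathcal T$. A minor bookkeeping step is to confirm that the hypotheses of the corollary match those of Theorem~\ref{thm:compequalstoeplitz} verbatim, up to the harmless nonzero factor $\frac{1}{\lambda}$ in the symbol.
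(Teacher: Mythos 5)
Your proposal is correct and is exactly the paper's (implicit) argument: the corollary is stated as an immediate consequence of Theorem \ref{thm:compequalstoeplitz}, which gives $W_{\psi,\varphi} = T_{f_{\psi,\varphi}} \in \mathcal{T}$ since $\lambda \neq 0$ makes the factor $\frac{1}{\lambda}$ harmless, after which Theorem \ref{thm:compactness} together with the continuity of the Berezin transform yields the equivalence. Your identification of membership in $\mathcal{T}$ as the missing ingredient that general weighted composition operators lack is precisely the right conceptual point.
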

\begin{example}
In particular, consider the case $\varphi(z)=a+z$ and $\psi(w)=e^{-w \overline{a}- \frac{|a|^2}{2}}=k_{-a}(w)$. Then $\lambda=1$ and we have: 
\begin{equation*}
f_{1, \varphi}(w)=e^{2 i \textup{Im}(a \overline{w})+ \frac{|a|^2}{2}}
\end{equation*}
and we recover the formula (\ref{Weyl-operator-equal-TO}) with $z=-a$. 
\end{example}
\begin{remark}
    \begin{enumerate}
        \item Indeed, the assumption that $\lambda \neq 0$ in Theorem \ref{thm:compequalstoeplitz} is no serious restriction: If $\lambda = 0$, then $W_{\psi, \varphi}$ is a rank one operator. From \cite[Corollary 3.2]{Borichev_Rozenblum2015}, we know that no Toeplitz operator with a bounded symbol can be of finite rank. So in this case, $W_{\psi, \varphi}$ can never be a Toeplitz operator.
        \item The argument used to derive Theorem \ref{thm:compequalstoeplitz} crucially hinged on the fact that the Berezin transform is injective. Nevertheless, the Berezin transform is injective on a larger class of functions. If we assume, say, that $f_{\psi, \varphi}$ is slowly increasing, i.e., it is a tempered distribution, then injectivity of the Berezin transform can still be utilized to show that $W_{\psi, \varphi} = T_{f_{\psi, \varphi}}$. Of course, there is a problem here, since $T_{f_{\psi, \varphi}}$ in general need not to be bounded for such symbols. But if we assume that $f_{\psi, \varphi} \in \mathcal S'$ such that $\widetilde{f_{\psi, \varphi}}^{(t)} \in L^\infty(\mathbb C)$ for some $0 < t < \frac{1}{2}$, then Theorem \ref{thm:BergerCoburnToeplitzAlg} ensures that the Toeplitz operator is bounded and even in the Toeplitz algebra, and the equality $W_{\psi, \varphi} = T_{f_{\psi, \varphi}}$ still holds true. Note that this approach gives rise to Toeplitz operators with unbounded symbols which are always compact.
        \end{enumerate}
\end{remark}

A natural problem which has attracted attention in the study of weighted composition operators on functions spaces is the study of their differences (see \cite{Tien_Khoi2019,Choe_Izuchi_Koo2010} and the references therein).

In the case of the Fock space, in \cite{Tien_Khoi2019} the authors showed that a difference of two weighted composition operators $W_{\psi_1,\varphi_1}-W_{\psi_2,\varphi_2}$ is bounded if and only if both operators are bounded, and compact if and only if both operators are compact. Using this characterization and our above results, we can determine the conditions under which such differences belong to $\mathcal{T}$:

\begin{corollary}
    Let  $\psi_j,\varphi_j\in\operatorname{Hol}(\mathbb C)$, $j=1,2$, with $0 \neq \psi_1 \neq \psi_2 \neq 0$. Then, $W_{\psi_1,\varphi_1}-W_{\psi_2,\varphi_2}\in \mathcal T$ if and only if $W_{\psi_j,\varphi_j}\in \mathcal T$, $j=1,2$.
\end{corollary}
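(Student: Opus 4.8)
The plan is as follows. The ``if'' direction is immediate: since $\mathcal{T}$ is a $C^\ast$-algebra, in particular a linear space, membership of both $W_{\psi_j,\varphi_j}$ forces $W_{\psi_1,\varphi_1}-W_{\psi_2,\varphi_2}\in\mathcal{T}$. All the work is in the ``only if'' direction, where I would set $D:=W_{\psi_1,\varphi_1}-W_{\psi_2,\varphi_2}$ and assume $D\in\mathcal{T}$. First I would run the standard reductions: $D\in\mathcal{T}\subset\mathcal{L}(F^2)$ is bounded, so by the difference theory of \cite{Tien_Khoi2019} both $W_{\psi_j,\varphi_j}$ are bounded; hence by Lemma \ref{Lemma-wco-bounded} and Theorem \ref{Theorem-properties-weighted-composition-operators} each symbol is affine, $\varphi_j(w)=a_j+\lambda_j w$ with $|\lambda_j|\le 1$, and $M(\psi_j,\varphi_j)<\infty$. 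The argument then splits according to the slopes $\lambda_1,\lambda_2$.

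The easy cases are those in which some $\lambda_j=1$. If $\lambda_1=1$, then by Lemma \ref{Lemma-wco-bounded} the operator $W_{\psi_1,\varphi_1}$ is a constant multiple of a Weyl operator and hence lies in $\mathcal{T}$ (Proposition \ref{prop:compinT}(1)); writing $W_{\psi_2,\varphi_2}=W_{\psi_1,\varphi_1}-D$ and using that $\mathcal{T}$ is a linear space, I would conclude $W_{\psi_2,\varphi_2}\in\mathcal{T}$ as well. The case $\lambda_2=1$ is symmetric. Thus it remains to treat the main case $\lambda_1\ne 1$ and $\lambda_2\ne 1$.

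For the main case the key is to upgrade the asymptotic estimate underlying Theorem \ref{Theorem-distance-from-T-algebra} from sufficiently localized operators to arbitrary elements of $\mathcal{T}$. Writing $u_j:=W_{\psi_j,\varphi_j}^\ast k_z$, the computation in \eqref{Estimate-Toeplitz-weighted-comp} shows $\langle A^\ast k_z,u_j\rangle\to 0$ as $z\to\infty$ for every sufficiently localized $A$, precisely because $\lambda_j\ne 1$. Since such $A$ are dense in $\mathcal{T}$ and $\|u_j\|^2=M_z(\psi_j,\varphi_j)\le M(\psi_j,\varphi_j)<\infty$ by \eqref{eq:mz}, approximating $D$ in operator norm gives $\langle D^\ast k_z,u_j\rangle\to 0$ for $j=1,2$. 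Now comes the decisive step, a simple subtraction: since $D^\ast k_z=u_1-u_2$,
\begin{align*}
\|D^\ast k_z\|^2=\langle u_1-u_2,u_1\rangle-\langle u_1-u_2,u_2\rangle=\langle D^\ast k_z,u_1\rangle-\langle D^\ast k_z,u_2\rangle\longrightarrow 0.
\end{align*}
Equivalently $\widetilde{DD^\ast}(z)=\|D^\ast k_z\|^2\to 0$, so $\widetilde{DD^\ast}\in C_0(\mathbb{C})$. As $DD^\ast\in\mathcal{T}$, Theorem \ref{thm:compactness} forces $DD^\ast$, and therefore $D$, to be compact. Invoking the compactness half of \cite{Tien_Khoi2019}, both $W_{\psi_j,\varphi_j}$ are then compact, hence both lie in $\mathcal{T}$, completing the proof.

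The main obstacle is precisely this last case and, within it, the transfer of the pointwise decay $\langle A^\ast k_z,u_j\rangle\to 0$ from the dense class of sufficiently localized operators to a general $D\in\mathcal{T}$; the uniform bound $\|u_j\|\le\sqrt{M(\psi_j,\varphi_j)}$ is what makes the density argument go through. I expect the subtraction identity collapsing the two cross-term limits into $\|D^\ast k_z\|^2\to 0$ to be the crucial observation, after which Theorem \ref{thm:compactness} and \cite{Tien_Khoi2019} do the rest. One should also keep track of the non-degeneracy hypotheses $0\ne\psi_1\ne\psi_2\ne 0$: they guarantee $D\ne 0$ and that the difference results of \cite{Tien_Khoi2019} apply, which is what excludes the degenerate situation $W_{\psi_1,\varphi_1}=W_{\psi_2,\varphi_2}\notin\mathcal{T}$ with $D=0$.
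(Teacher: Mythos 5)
Your proposal is correct and takes essentially the same route as the paper's proof: reduce via \cite{Tien_Khoi2019} to bounded operators with affine symbols, settle the slope-one case through Weyl operators and linearity of $\mathcal T$, and in the main case $\lambda_1,\lambda_2\neq 1$ combine the estimate \eqref{Estimate-Toeplitz-weighted-comp} with the density of sufficiently localized operators in $\mathcal T$ to conclude $\widetilde{DD^\ast}\in C_0(\mathbb C)$, hence compactness of $D$ by Theorem \ref{thm:compactness} and of both operators by \cite{Tien_Khoi2019}. Your only (cosmetic) deviation is approximating the cross terms $\langle D^\ast k_z, u_j\rangle$ directly by density plus the uniform bound $\|u_j\|\leq \sqrt{M(\psi_j,\varphi_j)}$, whereas the paper packages the same Cauchy--Schwarz argument into the displayed inequality \eqref{eq:ineq difference weighted composition op Berezin trans}.
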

\begin{proof}
    If $S\coloneqq W_{\psi_1,\varphi_1}-W_{\psi_2,\varphi_2} \in \mathcal T$, then $S$ is in particular bounded. By \cite[Theorem 1.4]{Tien_Khoi2019}, both operators $W_{\psi_j,\varphi_j}$ are bounded. Then, by Theorem \ref{Theorem-properties-weighted-composition-operators}, we have $\psi_j\in F^2$ and $\varphi_j(z)=a_j+\lambda_j z$, for all $z\in\mathbb C$, where $|\lambda_j|\leq 1$, $j=1,2$. Note that if $\lambda_j=1$ for some $j$, then one of the operators is the multiple of a Weyl operator (hence, a Toeplitz operator), and the result follows. Hence, we may assume that $\lambda_j\neq 1$ for $j=1,2$.
    
    Let now $A$ be a sufficiently localized operator. For all $z\in\mathbb C$ one can imitate the computations in \eqref{GL_estimate-difference_TO_and_weighted_composition_operator} replacing the operator $W_{\psi,\varphi}$ by $S$, from which one gets:
    \begin{align}\label{eq:ineq difference weighted composition op Berezin trans}
    \| S\| \| A - S\| \geq \left |\langle A^\ast k_z, S^\ast k_z\rangle - \langle S^\ast k_z, S^\ast k_z\rangle \right |
    =\left |\langle A^\ast k_z, S^\ast k_z\rangle - \widetilde{SS^\ast}(z) \right |.
    \end{align}
    Similarly as before, by \eqref{Estimate-Toeplitz-weighted-comp} we get
    \begin{align*}
        |\langle A^\ast k_z, S^\ast k_z\rangle|  \leq  |\langle A^\ast k_z, W_{\psi_1, \varphi_2}^\ast k_z\rangle| + |\langle A^\ast k_z, W_{\psi_2, \varphi_2}^\ast k_z\rangle|\to 0,\quad |z| \to \infty.
    \end{align*}
    Since the set of sufficiently localized operators is dense in $\mathcal T$, we can make the left-hand side of \eqref{eq:ineq difference weighted composition op Berezin trans} arbitrarily small, from which we get $\widetilde{SS^\ast}(z)\to 0$, $|z|\to\infty$. Thus, by Theorem \ref{thm:compactness}, we conclude that $S$ is compact.

    Finally, \cite[Theorem 1.4]{Tien_Khoi2019} implies that $W_{\psi_j\varphi_j}$ is compact $j=1,2$, from which the conclusion follows.
\end{proof}

\subsection{Singular integral operators}\label{sec:singularint}
For $\varphi \in F^2$ we define the \emph{singular integral operator} $S_\varphi$ by
\begin{align}\label{eq:singular integral operator definition}
    S_\varphi f(z) = \int_{\mathbb C} f(w) e^{z \cdot \overline w} \varphi(z-\overline{w})~d\mu(w).
\end{align}
This class of operators was introduced by K.\ Zhu in \cite{Zhu2015}. In \cite{Cao_Li2020}, the following result was obtained:
\begin{theorem}\label{thm:singularintegral}
Let $\varphi \in F^2$.
\begin{enumerate}
    \item $S_\varphi$ is bounded if and only if there exists $m \in L^\infty(\mathbb R)$ such that
    \begin{align}
    \label{eq:varphi sing op expression}
        \varphi(z) = \left( \frac{2}{\pi} \right)^{1/2}\int_{\mathbb R} m(x) e^{-2(x - \frac{i}{2}z)^2}~dx.
    \end{align}
    Then, $\| S_\varphi\| = \| m\|_\infty$.
    \item If $S_\varphi$ is compact, then $\varphi = 0$.
\end{enumerate}
\end{theorem}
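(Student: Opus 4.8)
The plan is to realize $S_\varphi$ as a multiplication operator through the spectral decomposition of a one-parameter Weyl group, so that both assertions reduce to elementary facts about multiplication operators on $L^2(\mathbb R)$. The starting observation is that $S_\varphi$ commutes with the real Weyl translations: substituting $w = v+t$ in the defining integral \eqref{eq:singular integral operator definition} shows that $S_\varphi W_t = W_t S_\varphi$ for every $t\in\mathbb R$, the point being that the factor $\varphi(z-\overline w)$ is invariant under $z\mapsto z+t,\, w\mapsto w+t$ for real $t$, while the remaining Gaussian factors reassemble into the conjugation by $W_t$. Since $\{W_t:t\in\mathbb R\}$ is a genuine one-parameter unitary group (for real $t,s$ one has $\im(t\overline s)=0$, hence $W_tW_s=W_{t+s}$), Stone's theorem provides a self-adjoint generator $G$ with $W_t=e^{itG}$; differentiating \eqref{def:Weyloperator} at $t=0$ gives $G=-i(M_w-\partial_w)$, which is unitarily equivalent through the Bargmann transform to the momentum operator on $L^2(\mathbb R)$ and therefore has simple (multiplicity one) spectrum equal to $\mathbb R$.

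Let $U:F^2\to L^2(\mathbb R)$ be the unitary implementing the spectral representation of $G$, so that $UGU^\ast$ is multiplication by the spectral variable and $UW_tU^\ast$ is the modulation $e^{it\,\cdot}$. Because $G$ has simple spectrum, its commutant is the maximal abelian algebra of all multiplication operators; as $S_\varphi$ commutes with every $W_t$ it commutes with $G$, and hence $US_\varphi U^\ast=M_m$ for some Borel function $m$. The generalized eigenfunctions of $G$ are exactly the Gaussians $h_x(z)=e^{-2(x-\frac{i}{2}z)^2}$: a short computation gives $h_x'=(2ix+w)h_x$ and thus $Gh_x=-2x\,h_x$. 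Expanding $\varphi$ in this eigenbasis is then precisely the displayed transform $\varphi=(\tfrac{2}{\pi})^{1/2}\int_{\mathbb R}m(x)h_x\,dx$, the constant being fixed by the normalization $m\equiv 1\Rightarrow\varphi\equiv 1\Rightarrow S_\varphi=\operatorname{Id}$ (checked by completing a Gaussian). \emph{This step is the main obstacle}: one must run the spectral/eigenfunction analysis rigorously even though the $h_x$ are not in $F^2$ (they are distributional eigenvectors), carry out the Gaussian ``orthogonality'' integral $\int_{\mathbb C}h_y(w)e^{z\overline w}h_x(z-\overline w)\,d\mu(w)\propto\delta(x-y)h_x(z)$ that simultaneously yields $S_\varphi h_x=m(x)h_x$ and identifies the symbol of $M_m$ with the same $m$ occurring in the integral representation of $\varphi$, and confirm that $S_\varphi$ is densely defined and closable so that the commutant description genuinely applies.

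Granting this, part (1) is immediate. If $\varphi$ has the stated representation with $m\in L^\infty(\mathbb R)$, then $S_\varphi=U^\ast M_mU$ is bounded with $\|S_\varphi\|=\|M_m\|=\|m\|_\infty$. Conversely, if $S_\varphi$ is bounded it commutes with all $W_t$, hence $M_m=US_\varphi U^\ast$ is a bounded multiplication operator, forcing $m\in L^\infty(\mathbb R)$ with $\|m\|_\infty=\|S_\varphi\|$; re-expanding $S_\varphi=m(G)$ in the eigenbasis recovers the integral formula for $\varphi$ via the injectivity of $\varphi\mapsto S_\varphi$. Thus boundedness of $S_\varphi$ is equivalent to the existence of a bounded symbol $m$, with the asserted norm identity.

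For part (2) I would give a self-contained argument not requiring the full classification. By the commutation relation above, $\alpha_t(S_\varphi)=W_tS_\varphi W_{-t}=S_\varphi$ for all $t\in\mathbb R$, so the transformation law $\widetilde{\alpha_z(A)}(w)=\widetilde A(w-z)$ yields $\widetilde{S_\varphi}(w)=\widetilde{S_\varphi}(w-t)$; hence the Berezin transform of $S_\varphi$ is constant along every horizontal line and depends only on $\im w$. If $S_\varphi$ is compact, then $\widetilde{S_\varphi}\in C_0(\mathbb C)$ (since $k_z\to 0$ weakly as $|z|\to\infty$), and a function constant in the real direction that vanishes at infinity must vanish identically. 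Therefore $\widetilde{S_\varphi}\equiv 0$, so $S_\varphi=0$; as the integral kernel of $S_\varphi$ with respect to $d\mu$ is $e^{z\overline w}\varphi(z-\overline w)$ and $e^{z\overline w}\neq 0$, this forces $\varphi\equiv 0$.
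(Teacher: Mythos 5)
Your overall strategy --- commute $S_\varphi$ past the one-parameter group $\{W_t\}_{t \in \mathbb R}$, pass to the Stone generator $G$, and use simplicity of its spectrum to realize $S_\varphi$ as a multiplication operator --- is in essence the known route: the paper does not prove Theorem \ref{thm:singularintegral} itself (it cites \cite{Cao_Li2020}), but it records exactly this structure in Eq.\ \eqref{eq:sing int op multiplier}, $S_\varphi = B\mathcal F^{-1}M_m \mathcal F B^\ast$, which is the concrete form of your abstract spectral representation. Your part (2) is correct and complete as written: real-translation invariance (which is the $v \in \mathbb R$ case of Lemma \ref{lemma:singularintegralshift}) forces $\widetilde{S_\varphi}$ to depend only on $\operatorname{Im} z$, compactness forces $\widetilde{S_\varphi} \in C_0(\mathbb C)$, and a $C_0$ function constant along horizontal lines vanishes, so $\varphi = 0$ on $i\mathbb R$ and hence everywhere by the identity theorem; this matches the computation $\widetilde{S_\varphi}(z) = \varphi(z - \overline z)$ carried out in the paper, and is arguably cleaner than deducing (2) from the multiplier classification.

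In part (1), however, there is a genuine gap --- one you flag yourself. Both directions pass through the unproven identification $S_{\varphi_m} = U^\ast M_m U$, i.e.\ that the Borel multiplier produced by the commutant argument is the \emph{same} $m$, with the constant $(2/\pi)^{1/2}$, appearing in \eqref{eq:varphi sing op expression}; your $\delta$-orthogonality integral with the non-normalizable eigenfunctions $h_x \notin F^2$ is only a heuristic for this. Moreover, the forward direction as stated is circular: you write ``$S_\varphi = U^\ast M_m U$ is bounded,'' but boundedness of the integral operator \eqref{eq:singular integral operator definition} is precisely what must be shown, so you first need the a priori only densely defined $S_{\varphi_m}$ to agree with the bounded operator $U^\ast M_m U$ on a dense subspace. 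The repair is standard and avoids distributional eigenvectors entirely: test on reproducing kernels. The reproducing property gives in one Gaussian integral $S_{\varphi_m} K_u(z) = e^{z \overline u}\varphi_m(z - \overline u)$, while $U K_u$ (equivalently $\mathcal F B^\ast K_u$) is an explicit Gaussian, so $U^\ast M_m U K_u$ can be computed in closed form; matching the two identifies the symbol, fixes the normalization, and then density of $\operatorname{span}\{K_u\}$ yields boundedness with $\| S_{\varphi_m}\| = \| m\|_\infty$ (the norm equality holding because the spectrum is simple and absolutely continuous). With that step supplied, your converse argument (maximal abelian commutant gives $S_\varphi = m(G)$ with $m \in L^\infty$, then injectivity of $\varphi \mapsto S_\varphi$, which follows from the bivariate Berezin transform $e^{-\frac{|z|^2+|w|^2}{2}}e^{z\overline w}\varphi(z-\overline w)$ since $z - \overline w$ ranges over all of $\mathbb C$) is sound.
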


We go on computing the bivariate Berezin transform of $S_\varphi$, assuming the operator is bounded:
\begin{align}
    \widetilde{S_\varphi}(w,z) &= e^{-\frac{|z|^2+|w|^2}{2}} S_\varphi(K_w)(z)\notag \\
    &= e^{-\frac{|z|^2 + |w|^2}{2}} \int_{\mathbb C} \varphi(z - \overline{u}) e^{z\cdot \overline{u}+ u \cdot \overline{w}}~d\mu(u)\notag \\
    &= e^{-\frac{|z|^2 + |w|^2}{2}} \varphi(z - \overline{w}) e^{z \cdot \overline{w}}.
\end{align}
Thus, we get
\begin{align}
    \widetilde{S_\varphi}(z) &= \varphi(z - \overline{z})\\
    K_{S_\varphi}(w, z) &= \varphi(z - \overline{w}) e^{z \cdot \overline{w}}.
\end{align}

We now describe how $S_\varphi$ transforms under the operator shift:
\begin{lemma}\label{lemma:singularintegralshift}
    Let $\varphi \in F^2$ such that $S_\varphi$ is bounded. Then, 
    \begin{align*}
        \alpha_v(S_\varphi) = S_{\varphi(\cdot - 2i\operatorname{Im}(v))}.
    \end{align*}
\end{lemma}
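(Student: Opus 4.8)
The plan is to prove the operator identity by comparing canonical integral kernels, exploiting the fact recorded in Section 2 that the canonical integral kernel determines a bounded operator uniquely. To set things up I first note that $\alpha_v(S_\varphi) = W_v S_\varphi W_{-v}$ is bounded, being a unitary conjugate of the bounded operator $S_\varphi$, with $\|\alpha_v(S_\varphi)\| = \|S_\varphi\|$.

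Before comparing kernels I have to make sense of the right-hand side, i.e.\ check that $\psi := \varphi(\cdot - 2i\operatorname{Im}(v))$ again lies in $F^2$ and that $S_\psi$ is bounded. This is the one genuinely delicate point, because $F^2$ is not invariant under translation by the purely imaginary number $2i\operatorname{Im}(v)$, so membership cannot be read off directly. I resolve it using the integral representation of Theorem \ref{thm:singularintegral}: writing $\varphi(z) = (2/\pi)^{1/2}\int_{\mathbb R} m(x) e^{-2(x - \frac{i}{2}z)^2}\,dx$ with $m \in L^\infty(\mathbb R)$ and substituting $z \mapsto z - 2i\operatorname{Im}(v)$, one computes $x - \frac{i}{2}(z - 2i\operatorname{Im}(v)) = (x - \operatorname{Im}(v)) - \frac{i}{2}z$, so after the change of variables $y = x - \operatorname{Im}(v)$ the function $\psi$ admits the same type of representation with symbol $m(\cdot + \operatorname{Im}(v))$. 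Since a real translation preserves $L^\infty(\mathbb R)$ together with its norm, Theorem \ref{thm:singularintegral} guarantees $\psi \in F^2$ and that $S_\psi$ is bounded with $\|S_\psi\| = \|m\|_\infty = \|S_\varphi\|$.

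With both sides now well defined, I compute $K_{\alpha_v(S_\varphi)}$ from the transformation formula for the canonical integral kernel under $\alpha_v$ derived in Section \ref{sec:Weyl}, namely $K_{\alpha_v(A)}(w,z) = e^{-|v|^2 + v\cdot\overline w + z\cdot\overline v}\, K_A(w-v, z-v)$, together with $K_{S_\varphi}(w,z) = \varphi(z - \overline w)\,e^{z\cdot\overline w}$. Substituting the shifted arguments, the argument of $\varphi$ becomes $(z-v) - \overline{(w-v)} = (z - \overline w) - (v - \overline v) = (z - \overline w) - 2i\operatorname{Im}(v)$, while the accumulated exponential $-|v|^2 + v\overline w + z\overline v + (z-v)\overline{(w-v)}$ telescopes: the terms $\pm|v|^2$, $\pm v\overline w$ and $\pm z\overline v$ cancel pairwise, leaving exactly $z\cdot\overline w$. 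Hence $K_{\alpha_v(S_\varphi)}(w,z) = \varphi\big((z-\overline w) - 2i\operatorname{Im}(v)\big)\,e^{z\cdot\overline w} = K_{S_\psi}(w,z)$.

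Since the two bounded operators have the same canonical integral kernel, they coincide, which is the claim. The only step requiring genuine care is the $F^2$-membership of $\psi$ in the second paragraph, where the non-translation-invariance of $F^2$ forces me to route through the integral representation of Theorem \ref{thm:singularintegral}; once that is in hand, the kernel computation is a routine cancellation and the conclusion follows from uniqueness of the canonical integral kernel.
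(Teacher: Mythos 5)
Your proof is correct, but it takes a genuinely different route from the paper's. The paper argues directly at the level of functions: it expands $\alpha_v(S_\varphi)f(z) = W_vS_\varphi W_{-v}f(z)$ using the definition of the Weyl operators and the defining integral \eqref{eq:singular integral operator definition}, then performs the substitution $w \mapsto w-v$ in the Gaussian integral (the shifted weight $e^{-|w-v|^2}$ being absorbed into the exponentials), which lands exactly on the integral defining $S_{\varphi(\cdot - 2i\operatorname{Im}(v))}f(z)$. That short computation gets convergence and well-definedness of the right-hand side essentially for free, since the substituted integral \emph{is} the defining integral of the shifted operator. You instead work through the canonical-kernel calculus: the transformation law $K_{\alpha_v(A)}(w,z) = e^{-|v|^2 + v\overline{w} + z\overline{v}}K_A(w-v,z-v)$ from Section \ref{sec:Weyl}, the formula $K_{S_\varphi}(w,z) = \varphi(z-\overline{w})e^{z\overline{w}}$, and uniqueness of the canonical kernel for bounded operators; your telescoping of the exponent and the identity $(z-v)-\overline{(w-v)} = (z-\overline{w}) - 2i\operatorname{Im}(v)$ are both correct. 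The price of this route is that you must know beforehand that $S_{\varphi(\cdot - 2i\operatorname{Im}(v))}$ is a well-defined bounded operator (otherwise its canonical kernel formula may not be invoked), and you pay it honestly: your observation that the representation \eqref{eq:varphi sing op expression} transforms under $z \mapsto z - 2i\operatorname{Im}(v)$ into the same representation with multiplier $m(\cdot + \operatorname{Im}(v))$ is exactly right, and the caution is warranted since $F^2$ is indeed not invariant under translations. As a bonus, this identification of the shifted multiplier is precisely the computation the paper redoes later in the proof of part (1) of Theorem \ref{thm_singular-integral-operators-questions-A-B}, so your argument packages the two steps together; by contrast, the paper's direct substitution is shorter and needs neither the nontrivial imported Theorem \ref{thm:singularintegral} nor kernel uniqueness.
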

\begin{proof}
    The statement follows from straightforward calculations. We provide them for completeness.
    \begin{align*}
        \alpha_v(S_\varphi) f(z) &= e^{z \overline{v} - \frac{|v|^2}{2}} \int_{\mathbb C} W_{-v}f (w) e^{(z-v)\overline{w}} \varphi(z-v-\overline{w})~d\mu(w)\\
        &= e^{z \overline{v} - |v|^2}  \int_{\mathbb C} f(w+v) e^{-w\overline{v} + (z-v)\overline{w}} \varphi(z-v-\overline{w})~d\mu(w).
        \intertext{Substituting $w \mapsto w-v$ yields:}
        \alpha_v(S_\varphi) f(z) &= \int_{\mathbb C} f(w) e^{z\overline{w}} \varphi(z - w - 2i\operatorname{Im}(v))~d\mu(w)\\
        &= S_{\varphi(\cdot - 2i\operatorname{Im}(v))}f(z)\qedhere
    \end{align*}
\end{proof}

We remark that one of the essential tools in the derivation of Theorem \ref{thm:singularintegral} was the use of the well-known \emph{Bargmann transform} $B\colon L^2(\mathbb R) \to F^2$. Recall that $B$ is unitary and defined as the integral transform:
\begin{align}
\label{eq:Bargmann transform}
        Bf(z)=\left(\frac{2}{\pi}\right)^{\frac{1}{4}} \int_{\mathbb R} f(x) e^{2x\cdot z-x^2-\frac{z^2}{2}}dx, \quad z\in \mathbb C, f\in L^2(\mathbb R).
\end{align}
As was shown in \cite{Cao_Li2020}, a bounded singular integral operator $S_\varphi$ can be written as
\begin{equation}
\label{eq:sing int op multiplier}
        S_\varphi= B \mathcal F^{-1}M_m\mathcal F B^\ast,
\end{equation}
where $M_m$ is the multiplication operator whose symbol $m\in L^\infty(\mathbb R)$ satisfies \eqref{eq:varphi sing op expression} and $\mathcal F$ is the Fourier transform
\begin{align*}
        \mathcal Ff(x)=\pi^{-\frac{1}{2}}\int_{\mathbb R} e^{-2ix\cdot y} f(y) dy,\quad f \in L^1(\mathbb R) \cap L^2(\mathbb R), ~x\in\mathbb R.
\end{align*}
It follows easily from \eqref{eq:sing int op multiplier}, that the set $\{S_\varphi\in \mathcal L(F^2)\colon \varphi\in F^2\}$ is a commutative von Neumann algebra isomorphic to the von Neumann algebra $\{M_m\colon m\in L^\infty(\mathbb R)\}$. In fact, this observation leads to a close connection with the theory of commutative Toeplitz $C^\ast$-algebras. For this, we recall some notions from previous works.

Consider a function $a \in L^\infty(\mathbb C)$. We say that $a$ is \emph{horizontal} if $a(z+ih)=a(z)$ for all $h\in\mathbb R, z\in \mathbb C$ and \emph{vertical} if $a(z+h)=a(z)$ for all $h\in \mathbb R, z\in \mathbb C$.
Note that the symbol $a$ is horizontal if and only if $a(i\ \cdot \ )$ is vertical, and that every vertical symbol is of this form.

Similarly, we say that an operator $A\in \mathcal L(F^2)$ is \emph{horizontal} if $\alpha_{ih}(A)=A$ for all $h\in\mathbb R$ and \emph{vertical} if $\alpha_{h}(A)=A$ for all $h\in \mathbb R$. Lemma \ref{lemma:singularintegralshift} implies that every bounded singular integral operator $S_\varphi$ is vertical.

In \cite{Esmeral_Vasilevski2016} the authors study Toeplitz operators with horizontal symbols. They show that these operators are normal and mutually commuting. Specifically, they construct a unitary operator
\begin{equation*}
U\colon F^2(\mathbb C)\to L^2(\mathbb R)    
\end{equation*}
such that for any horizontal symbol $a\in L^\infty(\mathbb R)$ it holds $ T_a =U^\ast M_{\gamma_a} U$, where $M_{\gamma_a}$ is the multiplication operator with symbol given by
\begin{equation}
\label{eq:gamma horizontal Toeplitz op}
        \gamma_a(x)=\pi^{-1/2}\int_{\mathbb R}a\left( \frac{y}{\sqrt{2}}\right)e^{-(x-y)^2}dy,\quad x\in\mathbb R.
\end{equation}
We mention that the construction of the operator $U$ follows a general scheme which has been successful in the diagonalization of other types of Toeplitz operators defined on several functions spaces whose symbols are invariant under suitable group actions (see, for example, \cite{Fulsche_Rodriguez_2023,Quiroga_Vasilevski2007}).

Roughly speaking, this method consists in deforming the space of functions on which the operators act ($F^2$ in our case) in a way that allows to apply a suitable Fourier transform along the variable under which the symbols are invariant. After some technical details, the composition of these transformations turn out to diagonalize the invariant Toeplitz operators under consideration.

Interestingly, in the case of horizontal symbols the above unitary operator $U$, obtained through this construction, coincides with the inverse of the Bargmann transform \eqref{eq:Bargmann transform} up to some rescaling. More precisely, given $t>0$, consider the unitary operator $\mathcal R_t$ on $L^2(\mathbb R)$ defined by
\begin{align*}
        \mathcal R_tf(x)=t^{1/2}f(t x),\quad x\in \mathbb R, \ f \in L^2(\mathbb R).
\end{align*}
By simple computations, one can check that $\mathcal R_t^*=\mathcal R_t^{-1}=\mathcal R_{t^{-1}}$ and $\mathcal R_t^\ast M_m \mathcal R_t=M_{m(t^{-1}\cdot)}$ for all $m\in L^\infty(\mathbb R)$.
Then, comparing the expressions in \cite{Esmeral_Vasilevski2016} with \eqref{eq:Bargmann transform}, we get $B=U^\ast \mathcal R_{2^{-\frac{1}{2}}}$.

Let now $m\in L^\infty (\mathbb R)$. We denote by $\varphi_m\in F^2$ its associated function given by \eqref{eq:varphi sing op expression} and by $S[\varphi_m]$ the induced (bounded) singular integral operator. Then we obtain from \eqref{eq:sing int op multiplier}:
\begin{align}
\label{eq:sing int op rescaling}
        S[\varphi_m]&=B\mathcal{F}^{-1} M_m \mathcal{F} B^\ast \nonumber\\
        &=(B\mathcal F^{-1} B^\ast) U^\ast \mathcal R_{2^{-\frac{1}{2}}} M_{m}  \mathcal R_{2^{\frac{1}{2}}} U (B \mathcal F B^\ast) \nonumber \\
        &=(B\mathcal F^{-1} B^\ast) U^\ast M_{m(2^{-\frac{1}{2}}\ \cdot\ )} U (B \mathcal F B^\ast).  
\end{align}

We remark that in previous works \cite{Bais_Venku2023,Cao_Li2020} it was noted that the set of bounded singular integral operators forms a maximal commutative $C^*$-algebra. Here, we give a more precise characterization:

\begin{theorem}
        The set of bounded singular integral operators $\mathcal S\coloneqq \{S_\varphi\in\mathcal L(F^2)\colon \varphi\in F^2\}$ coincides with the von Neumann algebra of vertical operators 
        \begin{equation*}
        \mathcal A_{\mathbb R}\ \coloneqq \{A\in \mathcal L(F^2)\colon \alpha_h(A)=A,\ \text{for all }h\in \mathbb R\}.
\end{equation*}

\end{theorem}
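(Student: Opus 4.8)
The plan is to establish the two inclusions $\mathcal S \subseteq \mathcal A_{\mathbb R}$ and $\mathcal A_{\mathbb R}\subseteq \mathcal S$ separately. The first is immediate: it is exactly the content of Lemma \ref{lemma:singularintegralshift}, since for real $h$ we have $\operatorname{Im}(h)=0$, whence $\alpha_h(S_\varphi)=S_{\varphi(\,\cdot\,-2i\operatorname{Im}(h))}=S_\varphi$, so every bounded singular integral operator is vertical. The substance of the theorem is the reverse inclusion, and for this I would transport the entire picture to $L^2(\mathbb R)$ via the Bargmann transform.

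First I would recharacterize the vertical operators as a commutant. Since $\alpha_h(A)=W_hAW_h^\ast=A$ for all $h\in\mathbb R$ is equivalent to $A$ commuting with every $W_h$, and since for real $h,h'$ one has $\operatorname{Im}(h\overline{h'})=0$ so that $W_hW_{h'}=W_{h+h'}$, the family $\{W_h:h\in\mathbb R\}$ is a strongly continuous one-parameter unitary group and
\[
\mathcal A_{\mathbb R}=\{W_h:h\in\mathbb R\}',
\]
the commutant taken in $\mathcal L(F^2)$. In particular $\mathcal A_{\mathbb R}$ is a von Neumann algebra, so equality of the two algebras is the correct target. Conjugating by the unitary $B$ and using \eqref{eq:sing int op multiplier}, which gives $B^\ast S_\varphi B=\mathcal F^{-1}M_m\mathcal F$ with $m$ running through all of $L^\infty(\mathbb R)$, I obtain
\[
B^\ast\mathcal S B=\{\mathcal F^{-1}M_m\mathcal F:m\in L^\infty(\mathbb R)\},
\]
the maximal abelian von Neumann algebra of Fourier multipliers.

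Next I would identify the image of the real Weyl group. The standard intertwining relations of the Bargmann transform \eqref{eq:Bargmann transform} (equivalently, a direct computation using that $a^\ast-a$ corresponds under $B$ to a nonzero multiple of $d/dx$) show that, for $h\in\mathbb R$, the operator $B^\ast W_h B=V_{ch}$ is translation by $ch$, where $V_tf(x)=f(x-t)$ and $c\neq 0$ is a fixed constant. Hence $\{B^\ast W_hB:h\in\mathbb R\}$ is precisely the translation group, and so $B^\ast\mathcal A_{\mathbb R}B=\{V_t:t\in\mathbb R\}'$. The remaining point is the well-known fact that this commutant is again the Fourier-multiplier MASA: under $\mathcal F$ translations become modulations, $\mathcal F V_t\mathcal F^{-1}=M_{e^{-2it(\cdot)}}$, and the characters $e^{-2it(\cdot)}$ generate $\{M_m:m\in L^\infty(\mathbb R)\}$ as a von Neumann algebra, whose commutant is itself. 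Thus $\{V_t\}'=\mathcal F^{-1}\{M_m:m\in L^\infty\}\mathcal F=B^\ast\mathcal S B$, and conjugating back by $B$ yields $\mathcal A_{\mathbb R}=\mathcal S$.

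I expect the main obstacle to be the explicit intertwining identity $B^\ast W_hB=V_{ch}$: one must verify that the \emph{real} Weyl operators correspond under $B$ to pure translations rather than modulations (equivalently, that the self-adjoint generator of $\{W_h\}_{h\in\mathbb R}$ has simple spectrum, which is what makes $\mathcal A_{\mathbb R}$ abelian and forces equality with the MASA $\mathcal S$). The normalization in \eqref{eq:Bargmann transform} must be tracked to pin down this correspondence, although only the fact that $c\neq 0$ enters the commutant computation. As a consistency check, the Berezin transform $\widetilde{S_\varphi}(z)=\varphi(z-\overline z)$ depends only on $\operatorname{Im}(z)$, exactly as a vertical operator should.
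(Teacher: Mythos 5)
Your proof is correct, but the second inclusion is argued quite differently from the paper. You characterize $\mathcal A_{\mathbb R}$ as the commutant of the one-parameter group $\{W_h\colon h\in\mathbb R\}$, transport everything to $L^2(\mathbb R)$ via the Bargmann transform, and invoke the classical fact that the commutant of the translation group is the Fourier-multiplier MASA, which by \eqref{eq:sing int op multiplier} is exactly $B^\ast \mathcal S B$; your one flagged obstacle is harmless, since with the paper's conventions \eqref{def:Weyloperator} and \eqref{eq:Bargmann transform} a direct computation gives $B^\ast W_h B = V_h$ exactly (i.e.\ $c=1$, no phase), and only $c\in\mathbb R\setminus\{0\}$ is needed. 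The paper instead proves $\mathcal A_{\mathbb R}\subseteq\mathcal S$ by showing, via the Esmeral--Vasilevski diagonalization of horizontal Toeplitz operators \cite{Esmeral_Vasilevski2016} and the rescaling identity \eqref{eq:sing int op rescaling}, that every Toeplitz operator with vertical symbol is a bounded singular integral operator (Eq.\ \eqref{eq:singular op Toeplitz vertical}), and then cites \cite[Corollary 3.4]{Fulsche_Rodriguez_2023}, which says that $\mathcal A_{\mathbb R}$ is generated as a von Neumann algebra by the Weyl operators it contains; since those Weyl operators are vertical-symbol Toeplitz operators and $\mathcal S$ is a von Neumann algebra, the inclusion follows. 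Your route is more elementary and self-contained: it bypasses both the external generation result of \cite{Fulsche_Rodriguez_2023} and the Toeplitz-operator machinery, reducing everything to the standard commutation theorem for translations. What the paper's route buys in exchange is the explicit identification \eqref{eq:singular op Toeplitz vertical} of vertical Toeplitz operators as singular integral operators (of independent interest for the commutative Toeplitz-algebra theory discussed afterwards), and a template that transfers to settings where your commutant computation is less standard, such as the lattice-invariant algebra $\mathcal A_G$ treated at the end of that section.
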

\begin{proof}
    As a consequence of Lemma \ref{lemma:singularintegralshift}, every bounded singular integral operator is vertical, hence $\mathcal S\subset \mathcal A_{\mathbb R}$.

    Let now $a\in L^\infty(\mathbb R)$ and let $S[\varphi_{\gamma_a(2^{-\frac{1}{2}}\ \cdot\ )}]$ be the singular integral operator associated to the function $m=\gamma_a(2^{-\frac{1}{2}}\ \cdot\ )\in L^\infty(\mathbb R)$, where $\gamma_a$ is given by \eqref{eq:gamma horizontal Toeplitz op}. As is well-known \cite{Zhu2019, Cao_Li2020}, the unitary operator $\mathcal F_B\coloneqq B\mathcal F B^{-1}\in \mathcal L(F^2)$ satisfies $\mathcal F_Bf(z)=f(-iz)$ and $\mathcal F^{-1}_Bf(z)=f(iz)$, for all $z\in\mathbb C$ and $f\in F^2$. Thus, by a standard computation, we obtain
\begin{equation}
\label{eq:singular op Toeplitz vertical}
        S[\varphi_{\gamma_a(2^{\frac{1}{2}}\ \cdot\ )}]=\mathcal F_B^{-1} U^\ast M_{\gamma_a} U \mathcal F_B=\mathcal F_B^{-1} T_a \mathcal F_B=T_{a(i \ \cdot \ )}.
\end{equation}
Since every vertical symbol can be written as $a(i \cdot \ )$ for some horizontal symbol $a$, it follows from \eqref{eq:singular op Toeplitz vertical} that
\begin{equation*}
        \{T_a\in \mathcal L(F^2)\colon a\text{ \it is vertical}\}\subset \mathcal S.
\end{equation*}
By \cite[Corollary 3.4]{Fulsche_Rodriguez_2023}, the algebra $\mathcal A_{\mathbb R}$ is generated (as a von Neumann algebra) by the Weyl operators contained in it. Since any such Weyl operator is also a Toeplitz operator with vertical symbol, it follows that $\mathcal S=\mathcal A_{\mathbb R}$.
\end{proof}

\begin{remark}
We remark that the above results extend naturally to the Fock space $F^2(\mathbb C^n)$ of Gaussian square integrable entire functions on $\mathbb C^n$. Moreover, they hold in a more general form replacing horizontal or vertical symbols and operators by objects invariant under translation in the direction of a Lagrangian subspace $\mathcal L \subset \mathbb C^n$.

Specifically, in \cite{Bais_Venku2023}, the authors consider the singular integral operator
\begin{align*}
    S^X_\varphi f(z) = \int_{\mathbb C^n} f(w) e^{z\cdot \overline{w}} \varphi(w-X^*\overline{X w})d\mu(w),
\end{align*}
where $X\in U(n,\mathbb C)$ is a unitary matrix and $\varphi \in F^2(\mathbb C^n)$.

Given such a matrix $X$ one can consider the unitary operator $U_X\colon F^2(\mathbb C^n)\to F^2(\mathbb C^n)$ defined by $U_Xf(z)=f(X^* z)$. Then it holds
\begin{align*}
    U_X S_{\varphi}^I U_X^*=S^X_\varphi, 
\end{align*}
where $S_{\varphi}^I$ is the singular integral operator
\begin{align*}
    S_{\varphi}^I f(z) = \int_{\mathbb C^n} f(w) e^{z\cdot \overline{w}} \varphi(z-\overline{w}) d\mu (w).
\end{align*}
One can see that $S_{\varphi}^I$ is a vertical operator, meaning that $W_{h}S_{\varphi}^IW_{-h}=S_{\varphi}^I$, for all $h\in \mathbb R^n\times \{0\}\subset \mathbb C^n$, where $W_{w}$ denotes the Weyl operator defined analogously on $F^2(\mathbb C^n)$ by
$W_{w}f(z)=e^{z\cdot \overline{w}-\frac{|w|^2}{2}}f(z-w)$, $z,w\in \mathbb C^n$, $f\in F^2(\mathbb C^n)$.

Theorem \ref{thm:singularintegral} holds analogously on the Fock space $F^2(\mathbb C^n)$ (cf.\ \cite{Cao_Li2020}). Hence, by similar arguments as above, one can prove that the set of bounded singular integral operators
\begin{align*}
    \{S^X_\varphi\in \mathcal L(F^2(\mathbb C^n))\colon \varphi \in F^2(\mathbb C^n)\}
\end{align*}
coincides with the von Neumann algebra of $X(\mathbb R^n\times \{0\})$-invariant operators:
\begin{align*}
    \left \{ A\in \mathcal L(F^2(\mathbb C^n)) \colon W_{Xh}AW_{-Xh}=A,\forall h\in \mathbb R^n\times \{0\} \right \}.
\end{align*}
\end{remark}

We add two more facts to the discussion. Of course, analogous statements also hold for the singular integral operators with respect to a Lagrangian subspace that were discussed above.
\begin{theorem}\label{thm_singular-integral-operators-questions-A-B}
    Let $\varphi \in F^2$ and $m \in L^\infty(\mathbb R)$ be related by Eq.\ \eqref{eq:varphi sing op expression}.
    \begin{enumerate}
        \item $S_\varphi \in \mathcal T$ if and only if $m \in \operatorname{BUC}(\mathbb R)$. 
        \item $S_\varphi = T_f$ for some vertical symbol $f \in L^\infty(\mathbb C)$ if and only if one can factorize $m = g \ast m_0$ with $m_0 \in L^\infty(\mathbb R)$, where $g(x) = \sqrt{2/\pi} e^{-2x^2}$. Then, $f(z) = m_0(\operatorname{Im}(z))$. 
    \end{enumerate}
\end{theorem}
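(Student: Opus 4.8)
The plan is to work entirely on the level of the symbol $m$, exploiting that $m \mapsto S_{\varphi_m}$ (where $\varphi_m$ is given by \eqref{eq:varphi sing op expression}) is an isometric linear bijection of $L^\infty(\mathbb R)$ onto $\mathcal S = \mathcal A_{\mathbb R}$: it is linear because $m \mapsto \varphi_m$ and $\varphi \mapsto S_\varphi$ are linear, isometric by Theorem \ref{thm:singularintegral}(1), and surjective by the identification $\mathcal S = \mathcal A_{\mathbb R}$ proven above. In particular $S_{\varphi_{m_1}} - S_{\varphi_{m_2}} = S_{\varphi_{m_1 - m_2}}$, so $\|S_{\varphi_{m_1}} - S_{\varphi_{m_2}}\| = \|m_1 - m_2\|_\infty$, and the map is injective. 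Both parts reduce to understanding how a natural operation on $S_\varphi$ reads off on $m$.

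For part (1), I would first convert the operator shift into a shift of $m$. Starting from Lemma \ref{lemma:singularintegralshift}, a direct substitution in \eqref{eq:varphi sing op expression} shows that passing from $\varphi$ to $\varphi(\cdot - 2i\operatorname{Im}(v))$ amounts to replacing $m$ by the translate $m(\cdot + \operatorname{Im}(v))$, i.e.\ $\alpha_v(S_{\varphi_m}) = S_{\varphi_{m(\cdot + \operatorname{Im}(v))}}$. Combined with the isometry this yields, for every $v \in \mathbb C$,
\begin{align*}
    \|\alpha_v(S_\varphi) - S_\varphi\| = \|m(\cdot + \operatorname{Im}(v)) - m\|_\infty.
\end{align*}
By the characterization $\mathcal T = \{A : v \mapsto \alpha_v(A) \text{ is } \|\cdot\|_{op}\text{-continuous}\}$ of Theorem \ref{thm:Toeplitz_eq_c1}, together with the group property and the fact that each $\alpha_{v_0}$ is isometric on $\mathcal L(F^2)$, continuity of $v \mapsto \alpha_v(S_\varphi)$ is equivalent to $\|m(\cdot + s) - m\|_\infty \to 0$ as $s \to 0$. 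Since this last condition is exactly the well-known characterization of the classes in $L^\infty(\mathbb R)$ possessing a bounded uniformly continuous representative, it gives $S_\varphi \in \mathcal T$ if and only if $m \in \operatorname{BUC}(\mathbb R)$.

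For part (2) I would identify, again on the symbol level, which singular integral operators are vertical Toeplitz operators. By the computation leading to \eqref{eq:singular op Toeplitz vertical}, a Toeplitz operator $T_f$ with vertical symbol $f(z) = m_0(\operatorname{Im}(z))$, $m_0 \in L^\infty(\mathbb R)$, is a bounded singular integral operator; carrying $f$ through the chain of unitaries $B$, $U$, $\mathcal F_B$ and the rescaling $\mathcal R_t$ (using the Esmeral--Vasilevski formula \eqref{eq:gamma horizontal Toeplitz op} for $\gamma_a$ together with $B = U^\ast \mathcal R_{2^{-1/2}}$), its associated $L^\infty$-symbol is computed to be the Gaussian average $g \ast m_0$ with $g(x) = \sqrt{2/\pi}\,e^{-2x^2}$, i.e.\ $T_f = S_{\varphi_{g \ast m_0}}$. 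Granting this, both implications follow from injectivity of $m \mapsto S_{\varphi_m}$: if $m = g \ast m_0$ then $S_\varphi = S_{\varphi_{g \ast m_0}} = T_f$ with $f(z) = m_0(\operatorname{Im}(z))$; conversely, if $S_\varphi = T_f$ for a vertical $f(z) = m_0(\operatorname{Im}(z))$, then $S_{\varphi_m} = S_{\varphi_{g \ast m_0}}$ forces $m = g \ast m_0$.

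I expect the main obstacle to be the bookkeeping in part (2): correctly propagating a vertical symbol through the several unitary conjugations to arrive at the clean convolution kernel $g$ and the precise identification $f(z) = m_0(\operatorname{Im}(z))$, where the scaling factors $2^{\pm 1/2}$ and the coordinate reflection implicit in $\mathcal F_B f(z) = f(-iz)$ must be handled with care (a naive computation produces a reflected convolution, which has to be reconciled with the stated normalization by fixing conventions or absorbing the reflection into $m_0$). Part (1), by contrast, is essentially immediate once the symbol-shift correspondence $\alpha_v(S_{\varphi_m}) = S_{\varphi_{m(\cdot + \operatorname{Im}(v))}}$ is verified, the only remaining input being the standard $L^\infty$-characterization of $\operatorname{BUC}(\mathbb R)$ through translation continuity.
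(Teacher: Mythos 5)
Your proposal is correct, and it splits against the paper's proof in an interesting way. Part (1) is essentially identical to the paper's argument: the paper likewise derives the symbol-shift correspondence from Eq.\ \eqref{eq:varphi sing op expression} (computing $\varphi(z-iy)$ and matching it with the translate $m(\cdot + y/2)$), combines it with Lemma \ref{lemma:singularintegralshift} and the isometry $\|S_\varphi\| = \|m\|_\infty$ to get $\|S_\varphi - \alpha_v(S_\varphi)\| = \|m - m(\cdot + \operatorname{Im}(v))\|_\infty$, and invokes Theorem \ref{thm:Toeplitz_eq_c1}; your explicit remarks that only $\operatorname{Im}(v)$ matters and that continuity at $0$ suffices by the group property are points the paper leaves implicit. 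Part (2), however, is where you genuinely diverge: the paper proves it by computing the Berezin transform of a vertical $T_f$ (obtaining $\widetilde{T_f}(a+ib) = f|_{i\mathbb R} \ast g_0(b)$ with $g_0(t) = \pi^{-1/2}e^{-t^2}$) and of $S_\varphi$ (obtaining $(m \ast g)(-b)$), then uses injectivity of the Berezin transform for the ``if'' direction and injectivity of Gaussian convolution (nowhere-vanishing Fourier transform) to solve the deconvolution in the ``only if'' direction, exploiting $g \ast g = g_0$; you instead push a vertical symbol through the unitary chain \eqref{eq:singular op Toeplitz vertical}, reading off the multiplier from the Esmeral--Vasilevski formula \eqref{eq:gamma horizontal Toeplitz op} and the rescaling $B = U^\ast \mathcal R_{2^{-1/2}}$, so that both implications reduce to injectivity of the isometric correspondence $m \mapsto S_{\varphi_m}$ and no deconvolution argument is needed. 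Notably, the paper itself flags your route as a viable alternative (``the equations \eqref{eq:gamma horizontal Toeplitz op} and \eqref{eq:sing int op rescaling} can also be used for a proof'') without carrying it out; what your route buys is that the converse direction is automatic from the isometry, while the paper's Berezin route is more self-contained, needing only the two transform computations already on hand. Finally, the reflection you flag is real and not merely a bookkeeping scare: writing $f(z) = m_0(\operatorname{Im} z)$ as $a(i\,\cdot)$ forces $a(w) = f(-iw) = m_0(-\operatorname{Re} w)$, and the computation then yields $m = g \ast m_0(-\,\cdot\,)$; the paper's own proof likewise concludes with $f(z) = m_0(-\operatorname{Im}(z))$, in (harmless) tension with the formula $f(z) = m_0(\operatorname{Im}(z))$ displayed in the theorem --- harmless because $g$ is even, so the factorization criterion $m = g \ast m_0$ is invariant under replacing $m_0$ by its reflection, and only the formula for $f$ picks up the sign. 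So your plan, including the care you prescribe for the $2^{\pm 1/2}$ rescalings and the reflection in $\mathcal F_B$, is sound as written.
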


\begin{proof}
We first want to emphasize that there are several possible ways to prove these statements. Besides the arguments presented below, the equations \eqref{eq:gamma horizontal Toeplitz op} and \eqref{eq:sing int op rescaling} can also be used for a proof.
\begin{enumerate}
    \item From the formula relating $\varphi$ and $m$ from Theorem \ref{thm:singularintegral} we obtain for $y \in \mathbb R$:
    \begin{align*}
        \varphi(z - iy) &= \left( \frac{2}{\pi}\right)^{1/2} \int_{\mathbb R} m(x) e^{-2(x - \frac{i}{2}(z-iy))^2}~dx\\
        &= \left( \frac{2}{\pi}\right)^{1/2} \int_{\mathbb R} m(x + \frac{y}{2}) e^{-2(x - \frac{i}{2}z)^2}~dx.
    \end{align*}
    Such that, using also Lemma \ref{lemma:singularintegralshift}:
    \begin{align*}
        \| S_{\varphi} - \alpha_v(S_\varphi)\| &= \| S_{\varphi} - S_{\varphi(\cdot - 2i\operatorname{Im}(v))}\|\\
        &= \| S_{\varphi - \varphi(\cdot - 2i\operatorname{Im}(v))}\|\\
        &= \| m - m(\cdot + \operatorname{Im}(v))\|_\infty.
    \end{align*}
    Clearly, this converges to zero as $v \to 0$ if and only if $m \in \operatorname{BUC}(\mathbb R)$. Hence, the statement follows from Theorem \ref{thm:Toeplitz_eq_c1}.
    \item Let $f \in L^\infty(\mathbb C)$ be a vertical symbol such that $f(z) = f(z+x)$ for all $z \in \mathbb C$, $x \in \mathbb R$. Then, we have the following for the Berezin transform of $T_f$ (writing $z = a+ib$ and $w = x+iy$):
    \begin{align*}
        \widetilde{T_f}(z) &= e^{-|z|^2} \frac{1}{\pi} \int_{\mathbb C} f(w) e^{z\overline{w} + w\overline{z}} e^{-|w|^2}~dw\\
        &= e^{-a^2 - b^2} \frac{1}{\pi} \int_{\mathbb R}  \int_{\mathbb R} f(iy)e^{(a+ib)(x-iy) + (x+iy)(a-ib)} e^{-x^2 - y^2}~dy~dx\\
        &= \frac{1}{\pi} \int_{\mathbb R} e^{-(x-a)^2}~dx \int_{\mathbb R} f(iy) e^{-(y-b)^2}~dy\\
        &= f|_{i\mathbb R} \ast g_0(b),
    \end{align*}
    where $g_0(t) = \frac{1}{\sqrt{\pi}}e^{-t^2}$. On the other hand, we know that
    \begin{align*}
        \widetilde{S_\varphi}(a+ib)= \varphi(2ib)
    \end{align*}
    such that 
    \begin{align*}
        \varphi(2ib) = m \ast g (-b),
    \end{align*}
    where $g(t) = \sqrt{2/\pi}e^{-2t^2}$. Note that $g \ast g = g_0$. If $m = m_0 \ast g$, then we see that with $f(z) = m_0(-\operatorname{Im}(z))$ we have
    \begin{align*}
        \widetilde{f}(a+ib) = m_0 \ast g(-b) = m_0 \ast g_0 \ast g_0(-b) = \widetilde{S_{\varphi}}(z)
    \end{align*}
    such that uniqueness of the Berezin transform implies $T_f = S_\varphi$. On the other hand, if we assume that $T_f = S_\varphi$, then we have 
    \begin{align*}
        m \ast g (-\cdot) = f|_{i\mathbb R} \ast g_0.
    \end{align*}
    Since convolving by $g_0$ or $g$ is injective, $m_0(-y) = f(iy)$ solves the equation $m = m_0 \ast g_0$.
\end{enumerate}
\end{proof}

To finish this section, we remark that the theory of vertical or horizontal operators, and more generally, the theory of operators invariant under translations by Lagrangian subspaces, forms part of a more general framework developed in \cite{Fulsche_Rodriguez_2023} using \emph{quantum harmonic analysis}. In particular, there is a discrete analogue of these algebras given as follows.

Consider the \emph{von Neumann lattice} $G=\sqrt{\pi}\mathbb Z + i \sqrt{\pi}\mathbb Z \subset  \mathbb C$. In \cite{Fulsche_Rodriguez_2023} the authors showed that the von Neumann algebra
\begin{align*}
    \mathcal A_G=  \left \{ A\in \mathcal L(F^2)\colon \alpha_w(A)=A,\ \forall w\in G \right \}
\end{align*}
is commutative. Moreover, they constructed a Bargmann-type transform diagonalizing these operators as in the case of horizontal operators.

We will write $w_d = \sqrt{\pi} d_1 + i \sqrt{\pi} d_2$, where $d = (d_1, d_2) \in \mathbb Z^2,$ and consider the operator
\begin{align*}
    V & \colon L^2(\mathbb C,\mu) \longrightarrow L^2(R_0,\mu)\otimes L^2(\mathbb T^2,\nu),\\
    V(f)(z, \lambda) & := \sum_{d \in \mathbb Z^2} e^{-z \overline{w_d} - \frac{|w_d|^2}{2}} f(z+w_d)\lambda^{-d}, \quad (z, \lambda) \in R_0 \times \mathbb T^2, \ f\in F^2,
\end{align*}
where $R_0=[0,\sqrt{\pi}]^2$, $\mu$ is the Gaussian measure (restricted to $R_0$ on the right-hand side), $\mathbb T^2$ is the $2$-torus and $\nu$ is its normalized Haar measure.
One can show that this operator is unitary.

Further, consider the measure $\eta$ on $\mathbb T^2$ given by $d\eta= H d\nu$, $\lambda\in\mathbb T^2$, where the nowhere vanishing function $H$ is given by
\begin{align*}
H(\lambda)=\int_{R_0}|V(1)(z,\lambda)|^2 d\mu(z),\quad \lambda\in\mathbb T^2,    
\end{align*}
and let $S$ be the isometric operator
\begin{align*}
     S\colon L^2(\mathbb T^2,\eta) & \longrightarrow L^2(R_0,\mu)\otimes L^2(\mathbb T^2,\nu),\\
     f & \longmapsto f \cdot V(1).
\end{align*}
Using these operators we can construct an analogue of the Bargmann transform \eqref{eq:Bargmann transform}:
\begin{proposition}[{\cite{Fulsche_Rodriguez_2023}}]
The operator
    \begin{align*}
    B_G=V^\ast S \colon L^2(\mathbb T^2,\eta) \longrightarrow F^2,
    \end{align*}
is unitary and maps the function $\lambda \mapsto \lambda^d$ to the normalized reproducing kernel $k_{w_d}$, for all $d \in\mathbb Z^2$.

Moreover, for each operator $A\in \mathcal A_G$ it holds $ B_G^\ast A B_G=M_{\gamma_A}$,
where $M_{\gamma_A}$ denotes the multiplication operator acting on $L^2(\mathbb T^2)$ with symbol $\gamma_A \in L^\infty(\mathbb T^2)$ and the map $A\mapsto \gamma_A$ is a $*$-isomorphism.
\end{proposition}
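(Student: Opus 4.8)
The plan is to treat $V$ as a Weil--Brezin--Zak transform adapted to the von Neumann lattice $G$ and to read off all three assertions from the way $V$ turns the lattice action $\alpha$ into a fibered action over $\mathbb T^2$. Observing that the defining kernel of $V$ is a Weyl operator, $V(f)(z,\lambda)=\sum_{d\in\mathbb Z^2}(W_{-w_d}f)(z)\,\lambda^{-d}$, is the organizing remark. For the unitarity of $B_G=V^\ast S$ I would argue as follows. Since $V$ is unitary and $S$ is isometric, it suffices to show that $S$ maps $L^2(\mathbb T^2,\eta)$ \emph{onto} $V(F^2)$; then $B_G=V^\ast S$ is an isometry from $L^2(\mathbb T^2,\eta)$ onto $V^\ast V(F^2)=F^2$, hence unitary. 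Here the choice $d\eta=H\,d\nu$ is exactly what forces $S$ to be isometric, since $\|Sf\|^2=\int_{\mathbb T^2}|f(\lambda)|^2\big(\int_{R_0}|V(1)(z,\lambda)|^2\,d\mu(z)\big)\,d\nu(\lambda)=\int_{\mathbb T^2}|f|^2H\,d\nu=\|f\|_{L^2(\eta)}^2$, and the hypothesis that $H$ is nowhere vanishing is what guarantees that $V(1)$ is a generating section for the fiber module, so that no fiber degenerates.

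The core step is therefore the identification $\operatorname{ran}(S)=V(F^2)$. I would first record the quasi-periodicity of the fiber function $z\mapsto V(g)(z,\lambda)$ under the lattice, which follows from the Weyl relation $W_uW_v=e^{-i\operatorname{Im}(u\overline v)}W_{u+v}$ together with $k_{w_d}=W_{w_d}1$. For entire $g$ this pins down, fiber by fiber, the space $\{V(g)(\cdot,\lambda):g\in F^2\}$; the feature that makes the whole scheme work is that $G$ sits at \emph{critical density} (the cell $R_0$ has area $\pi$), so these fibers are as small as possible and are generated by the single section $V(1)(\cdot,\lambda)$. Matching this against $\operatorname{ran}(S)=\{f\cdot V(1):f\in L^2(\mathbb T^2,\eta)\}$ yields $\operatorname{ran}(S)=V(F^2)$ and hence unitarity. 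The same computation, now evaluated on the characters $\lambda\mapsto\lambda^d$, gives $B_G(\lambda^d)=k_{w_d}$; the delicate bookkeeping is the family of phases $e^{-i\operatorname{Im}(w_c\overline{w_d})}\in\{\pm1\}$ produced by $G$, i.e.\ the fact that $c\mapsto W_{w_c}$ is a \emph{projective} representation of $\mathbb Z^2$.

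For the diagonalization I would use that, by definition, $A\in\mathcal A_G$ iff $\alpha_{w_c}(A)=A$ for all $c$, equivalently $A$ commutes with every lattice Weyl operator $W_{w_c}$. Conjugation by the unitary $B_G$ is automatically a $\ast$-isomorphism $\operatorname{Ad}(B_G^\ast)\colon\mathcal L(F^2)\to\mathcal L(L^2(\mathbb T^2,\eta))$, so it remains to show that it carries $\mathcal A_G$ \emph{onto} the multiplication algebra $\{M_\gamma:\gamma\in L^\infty(\mathbb T^2)\}$. Transporting the relations through $B_G(\lambda^d)=k_{w_d}$, the operators $W_{w_c}$ become (twisted) shifts on the Fourier side, and an operator commuting with all of them must commute with all multiplications by characters, hence with all of $L^\infty(\mathbb T^2)$; since $L^\infty(\mathbb T^2)$ is maximal abelian in $\mathcal L(L^2(\mathbb T^2))$, such an operator is itself a multiplication $M_{\gamma_A}$. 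The commutativity of $\mathcal A_G$ (already available from \cite{Fulsche_Rodriguez_2023}) is precisely the statement that this transported algebra is abelian, which is what lets the projective signs collapse; conversely each $M_\gamma$ pulls back into $\mathcal A_G$, giving surjectivity. The map $A\mapsto\gamma_A$ is then the restriction of $\operatorname{Ad}(B_G^\ast)$, so it is a $\ast$-isomorphism onto $L^\infty(\mathbb T^2)$, with $\|\gamma_A\|_\infty=\|A\|$.

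The main obstacle is the fiberwise analysis at critical density: one must prove that the quasi-periodic fibers of $V(F^2)$ are minimal and trivialized by $V(1)$, and one must keep track of the $\pm1$ phases coming from the projective nature of $c\mapsto W_{w_c}$ on the von Neumann lattice. Everything else---the isometry of $S$, the unitarity of $B_G$ once the range is identified, and the $\ast$-isomorphism property of $A\mapsto\gamma_A$---is then comparatively routine, being formal consequences of $V$ being unitary and of $L^\infty(\mathbb T^2)$ being maximal abelian.
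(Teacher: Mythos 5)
The paper itself contains no proof of this proposition: it is imported from \cite{Fulsche_Rodriguez_2023}, so your proposal can only be measured against the construction in that reference, whose skeleton (Zak-type transform, fiberwise triviality at critical density, transport of the lattice Weyl operators, maximal abelianness) your plan does correctly mirror. The isometry computation for $S$ is right, and so is the reduction of unitarity of $B_G$ to the range identification $\operatorname{ran}(S) = V(F^2)$. But as a proof the proposal has a genuine gap exactly where you yourself locate ``the main obstacle'': the fiberwise claim is asserted, not proved. What is needed is that for a.e.\ $\lambda$ the function $z \mapsto V(g)(z,\lambda)$ extends from $R_0$ to an entire function obeying a $G$-quasi-periodicity law, and that the space of entire functions satisfying this law is at most one-dimensional because the fundamental cell of $G$ has area $\pi$ (a theta-function degree count), with the nowhere-vanishing of $H$ ensuring the fiber is exactly one-dimensional a.e. Invoking ``critical density'' is a heuristic for this argument, not a substitute; without it neither the unitarity of $B_G$ nor the surjectivity of $A \mapsto \gamma_A$ is established.

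The second problem is that the $\pm 1$ phases do not ``collapse'' by commutativity of $\mathcal A_G$; with the formulas as transcribed in this survey they do not cancel at all. Since $W_{-w_c}W_{w_d} = (-1)^{c_2 d_1 + c_1 d_2} W_{w_{d-c}}$, a direct computation yields
\begin{align*}
    V(k_{w_d})(z,\lambda) = \lambda^{-d}\, V(1)\bigl(z, ((-1)^{d_2}\lambda_1, (-1)^{d_1}\lambda_2)\bigr),
\end{align*}
which is not of the form $S(\lambda^{\pm d})(z,\lambda)$, and correspondingly $B_G^\ast W_{w_c} B_G$ acts as the twisted operator $f(\lambda) \mapsto \lambda^c f((-1)^{c_2}\lambda_1, (-1)^{c_1}\lambda_2)$ rather than as a character multiplication. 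Hence your inference ``commuting with all of them forces commuting with all multiplications by characters'' fails as stated: squaring these operators only produces the even characters $M_{\lambda^{2c}}$, whose commutant is strictly larger than $L^\infty(\mathbb T^2)$. The standard repair, which is what makes the reference's construction work, is the explicit cocycle correction $\widetilde W_d := (-1)^{d_1 d_2} W_{w_d}$; one checks $\widetilde W_c \widetilde W_d = \widetilde W_{c+d}$ because $(c_1+d_1)(c_2+d_2) \equiv c_1 c_2 + d_1 d_2 + c_1 d_2 + c_2 d_1 \pmod 2$, so $d \mapsto \widetilde W_d$ is a genuine (non-projective) representation of $\mathbb Z^2$. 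With the phase-adjusted vectors $\widetilde W_d 1 = (-1)^{d_1 d_2} k_{w_d}$ the lattice operators transport exactly to the multiplications $M_{\lambda^c}$; these generate the multiplication algebra $L^\infty(\mathbb T^2)$, which is maximal abelian, and since $\mathcal A_G = \{W_{w_c} : c \in \mathbb Z^2\}'$, conjugation by $B_G$ carries $\mathcal A_G$ onto $L^\infty(\mathbb T^2)$, giving the $\ast$-isomorphism $A \mapsto \gamma_A$. Your outline gestures at precisely this mechanism, but without the cocycle correction (or equivalent bookkeeping) the two computational claims you lean on --- $B_G(\lambda^d) = k_{w_d}$ via ``the same computation'', and the character-multiplication form of the transported shifts --- do not go through for the formulas as written.
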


Similarly to the case of the singular integral operators defined \eqref{eq:singular integral operator definition}, where $G$ was instead the Lagrangian subspace $\mathbb R \oplus \{ 0\}$, we can now construct an ``singular integral operator’’ using \eqref{eq:sing int op multiplier}. Since $\eta$ is a not a Haar measure, the most natural Fourier transform defined on $L^2(\mathbb T^2,\eta)$ is given by using the orthonormal basis $(\varepsilon_k)_{k\in\mathbb Z^2}$, where $\varepsilon_k(\lambda)=\lambda^k H(\lambda)^{-1/2}$, $\lambda\in \mathbb T^2$. Hence, we consider the operator
\begin{align*}
        \mathcal F_G \colon L^2(\mathbb T^2,\eta) &\longrightarrow l^2(\mathbb Z^2),\\
        f & \longmapsto \left( \int_{\mathbb T^2} f(\lambda) \overline{\varepsilon_{k}(\lambda)}\right)_{k\in\mathbb Z^2}.
\end{align*}
Finally, given $m\in L^\infty(\mathbb Z^2)$, consider the operator $\mathcal F_G^\ast M_m  \mathcal F_G$. By construction, this is a diagonal operator acting on the orthonormal basis of $L^2(\mathbb T,\eta)$ acting by
\begin{align*}
    \mathcal F_G^\ast M_m  \mathcal F_G(\varepsilon_k)= m(k) \varepsilon_k,\quad k\in \mathbb Z^2.
\end{align*}
Then we can define the operator
\begin{align}\label{eq:von Neumann singular operator}
S_G[m]=B_G \mathcal F_G^\ast M_m  \mathcal F_G B_G^\ast \colon F^2\longrightarrow F^2.
\end{align}
This operator is clearly diagonal with respect to the orthonormal basis $(V^\ast S(\varepsilon_k))_{k\in\mathbb Z^2}$. In particular, it is compact if and only if $m\in c_0(\mathbb Z^2)$. However, it is unclear under which conditions such an operator belongs to the Toeplitz algebra. Moreover, a closed expression for the functions $\varepsilon_k$ is unknown.

\begin{question}
Let $m\in L^\infty(\mathbb Z^2)$ and let $S_G[m]$ be the operator given by \eqref{eq:von Neumann singular operator}. Is there a closed expression for this operator analogous to \eqref{eq:singular integral operator definition}? When does the operator $S_G[m]$ belong to $\mathcal T$?
\end{question}

\subsection{Generalized Volterra-type operators}\label{sec:volterra}
One of the best known integral operators is the Volterra operator, which sends a function $f\in L^2[0,1]$ to its indefinite integral $t\mapsto \int_0^tf(s)ds$.
This type of operator, along with its generalizations, has been extensively studied in various function spaces (see, for example, the survey \cite{Siskakis_2004}), finding applications particularly in the study of linear isometries (see \cite{Fleming_Jamison_2002}).

In recent years, several authors have investigated the operator-theoretic properties of such operators in the context of the Fock space. Specifically, in \cite{Fekadiea2023,Mengestie_Worku2018} the authors study Volterra-type operators of the form:
\begin{align}\label{eq:definition V(g,varphi)}
    V_{(g,\varphi)}f(z) = \int_0^z (f\circ \varphi)(w) g'(w) dw, \quad f\in \operatorname{Hol}(\mathbb C),
\end{align}
where $g$ and $\varphi$ are holomorphic functions. 

Note that letting $\varphi(z)=z$ we obtain operators of the form
\begin{align}\label{eq:definition Vg}
    V_gf(z) = \int_0^z f(w) g'(w) dw,
\end{align}
which were studied in \cite{Constantin_2012} and which are also the $1$-dimensional versions of the so-called Ces\`{a}ro operators (see \cite{Chen_Wang2022} and the references therein).
Furthermore, we note that the operator $V_z$ is the exact analogue of the classical Volterra operator.

As it turns out, and similarly as in the case of weighted composition operators, many properties of the operator $V_{(g,\varphi)}$ depend on the behaviour of the function
\begin{align}\label{eq:definition R(g,varphi)}
    R_{(g,\varphi)}(z)\coloneqq\frac{|g'(z)|}{1+|z|}e^{\frac{1}{2}(|\varphi(z)|^2-|z|^2)}.
\end{align}
For instance, the following characterization for boundedness and compactness of Volterra-type operators $V_{(g,\varphi)}$ was obtained in \cite{Mengestie_Worku2018}:
\begin{theorem}\label{thm:boundedness compactness Volterra type}
    Let $g,\varphi\in \operatorname{Hol}(\mathbb C)$ and let $V_{(g,\varphi)}$ be defined as in \eqref{eq:definition V(g,varphi)}.
    \begin{enumerate}
        \item[(1)] $V_{(g,\varphi)}$ defines a bounded operator on $F^2$ if and only if $\sup_{z\in\mathbb C}|R_{(g,\varphi)}(z)|<\infty$.

        \item[(2)] $V_{(g,\varphi)}$ defines a compact operator on $F^2$ if and only if $R_{(g,\varphi)}(z)\to 0$ as $|z|\to\infty$.
    \end{enumerate}
    In either case, there are constants $a,\lambda\in\mathbb C$ with $|\lambda|\leq 1$ such that $\varphi(z)=a+\lambda z$. If $V_{(g,\varphi)}$ is compact, then $|\lambda|<1$.
\end{theorem}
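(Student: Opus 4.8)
The plan is to reduce the boundedness and compactness of $V_{(g,\varphi)}$ to a Fock--Carleson embedding problem and then to read off the pointwise quantity $R_{(g,\varphi)}$. The starting point is the elementary observation that $V_{(g,\varphi)}f(0)=0$ while $(V_{(g,\varphi)}f)'(z)=f(\varphi(z))\,g'(z)$. Combined with the Littlewood--Paley--type norm equivalence on the Fock space,
\begin{align*}
\|h\|_{F^2}^2 \asymp |h(0)|^2 + \int_{\mathbb C} |h'(z)|^2\,\frac{d\mu(z)}{1+|z|^2},
\end{align*}
valid for all entire $h$ and readily verified on the monomial basis $e_n$, this yields
\begin{align*}
\|V_{(g,\varphi)}f\|_{F^2}^2 \asymp \int_{\mathbb C} |f(\varphi(z))|^2\,|g'(z)|^2\,\frac{d\mu(z)}{1+|z|^2}.
\end{align*}
The whole problem is thus to decide when the right-hand side is dominated by $\|f\|_{F^2}^2$ uniformly in $f$, and when it satisfies the corresponding ``vanishing'' estimate.

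First I would rewrite this integral as a Fock--Carleson integral. Factoring out $e^{-|\varphi(z)|^2}$ and pushing the resulting measure forward along $\varphi$, it becomes $\int_{\mathbb C}|f(u)|^2 e^{-|u|^2}\,d\nu(u)$, where $\nu=\varphi_\ast\tau$ and $\tau$ has density comparable to $|R_{(g,\varphi)}(z)|^2$ with respect to area measure, since $|g'(z)|^2 e^{|\varphi(z)|^2-|z|^2}/(1+|z|^2)\asymp |R_{(g,\varphi)}(z)|^2$ by the very definition of $R_{(g,\varphi)}$. Boundedness of $V_{(g,\varphi)}$ then means exactly that $\nu$ is a Fock--Carleson measure, and compactness that $\nu$ is a vanishing one. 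Invoking the standard characterization of such measures through the boundedness (resp.\ decay) of the averaging function $u\mapsto\nu(D(u,r))$, together with the sub-mean-value property of $|g'|^2$ to compare local averages of the density with its pointwise values, I would convert the measure condition into $\sup_z|R_{(g,\varphi)}(z)|<\infty$ (resp.\ $R_{(g,\varphi)}(z)\to 0$ as $|z|\to\infty$). For necessity one may equivalently test on the normalized reproducing kernels $k_w$, noting that $\|V_{(g,\varphi)}k_w\|^2$ is comparable to the Berezin transform of $\nu$; for compactness, necessity follows because $k_w\rightharpoonup 0$ weakly as $|w|\to\infty$, so a compact operator must send $k_w$ to a norm-null net.

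It remains to establish the structural claims on $\varphi$. Here I would argue as in the weighted composition operator case (cf.\ Lemma \ref{Lemma-wco-bounded} and Theorem \ref{Theorem-properties-weighted-composition-operators}): since $g'\not\equiv 0$ for a nonzero operator, boundedness of $R_{(g,\varphi)}$ forces $\tfrac12(|\varphi(z)|^2-|z|^2)\le \log(1+|z|)-\log|g'(z)|+O(1)$, so that $|\varphi(z)|^2-|z|^2$ can grow at most logarithmically away from the zeros of $g'$. A growth (Phragm\'en--Lindel\"of/Liouville) argument then shows $\varphi$ is a polynomial of degree at most one, $\varphi(z)=a+\lambda z$, and substituting this affine form gives $|\varphi(z)|^2-|z|^2=(|\lambda|^2-1)|z|^2+2\re(\lambda\overline{a}z)+|a|^2$; boundedness of $R_{(g,\varphi)}$ then forces $|\lambda|\le 1$, while the Gaussian decay required by the vanishing condition (compactness) forces $|\lambda|<1$.

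The main obstacle is the second step: turning the integral condition into the pointwise supremum condition on $R_{(g,\varphi)}$. This hinges on the sub-mean-value estimate comparing the pointwise density with its local average, and on the pushforward geometry of $\nu$, which behaves well only once $\varphi$ is known to be affine (for $\lambda\neq 0$ the map $\varphi$ is an affine bijection, so $\nu(D(u,r))\asymp\tau(D((u-a)/\lambda,\,r/|\lambda|))\asymp|R_{(g,\varphi)}((u-a)/\lambda)|^2$). The degenerate case $\lambda=0$ must be handled separately: there $V_{(g,\varphi)}$ is the rank-one operator $f\mapsto f(a)\,(g-g(0))$, the criterion reduces to $g-g(0)\in F^2$, and the operator is automatically compact, consistent with the final clause $|\lambda|<1$.
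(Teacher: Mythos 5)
The paper offers no proof of this theorem: it is quoted verbatim from \cite{Mengestie_Worku2018} (the case $\varphi = \mathrm{id}$ going back to \cite{Constantin_2012}), so your proposal can only be compared with the literature's argument --- and your skeleton is indeed the standard one: the Littlewood--Paley equivalence, the reduction of $\|V_{(g,\varphi)}f\|^2$ to an embedding $F^2 \hookrightarrow L^2(e^{-|u|^2}\,d\nu)$ with $\nu = \varphi_\ast\bigl(|R_{(g,\varphi)}|^2\,dz\bigr)$, necessity by testing on the kernels $k_w$, the circle-growth argument for affineness of $\varphi$, and the separate rank-one treatment of $\lambda = 0$, which is all correct in outline. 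Two technical cautions. On ordering: the Carleson machinery (pushforward geometry, averaging functions) is only usable after $\varphi$ is known to be affine, so necessity must be run first, by kernel testing alone; and there the sub-mean-value property must be applied to the entire function $h_w = (k_w\circ\varphi)\,g'$ against the Gaussian weight, namely $|h_w(z_0)|^2 e^{-|z_0|^2} \lesssim \int_{D(z_0,1)}|h_w|^2\,d\mu$, and \emph{not} to $|g'|^2$ alone as you write: the factor $e^{|\varphi(z)|^2-|z|^2}$ varies by $e^{O((1-|\lambda|^2)|z_0|)}$ across a unit disc, so comparing ``local averages of the density with its pointwise values'' fails for the density itself. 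Likewise, your affineness step is leaky near the zeros of $g'$; the clean fix is to average your inequality over circles $|z|=r$ and invoke Jensen's formula, which yields $\sum_{n\geq 2}|a_n|^2 r^{2n} \leq O(\log r)$ for $\varphi=\sum_n a_n z^n$ and hence $a_n=0$ for $n\geq 2$, $|a_1|\leq 1$. Both issues are repairable.

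The genuine gap is the last step. Your claim that ``the Gaussian decay required by the vanishing condition forces $|\lambda|<1$'' is false, and no repair is possible, because the final clause of the statement fails as printed. Take $g(z)=z$ and $\varphi(z)=z$, so $\lambda=1$: then $R_{(g,\varphi)}(z)=(1+|z|)^{-1}\to 0$, while $V_{(g,\varphi)}=V_z$ is the weighted shift $e_n \mapsto (n+1)^{-1/2}e_{n+1}$, hence compact --- consistent with part (2) but contradicting ``compact $\Rightarrow |\lambda|<1$''. (The paper itself uses compactness of $V_z$, an operator with $\lambda = 1$, in the proof of Theorem \ref{thm:volterra_toeplitzalg}, and Corollary \ref{cor:boundedness compactness Vg} records compactness of $V_g$ for all linear $g$.) What \emph{is} true when $|\lambda|=1$ is a rigidity statement in the spirit of Lemma \ref{Lemma-wco-bounded}: boundedness of $R_{(g,\varphi)}$ forces $|G(z)|\lesssim 1+|z|$ for the entire function $G(z)=g'(z)e^{\overline{a}\lambda z}$, hence $g'(z)=(c_1+c_2 z)e^{-\overline{a}\lambda z}$, and the vanishing condition forces $c_2=0$; but the operators with $c_2=0$, $c_1\neq 0$ are nonzero and compact. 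The discrepancy is an artifact of the weighted-composition case, where $M_z(\psi,\varphi)$ is \emph{constant} when $|\lambda|=1$, so compactness is impossible there; the extra factor $(1+|z|)^{-1}$ in $R_{(g,\varphi)}$ destroys exactly this rigidity. The correct move is therefore to prove parts (1), (2) and the affine form of $\varphi$ as you propose, and to drop (or amend along the above lines) the final clause rather than attempt to derive it.
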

As a corollary, the following result, originally due to Constantin (see \cite{Constantin_2012}) was obtained:
\begin{corollary}\label{cor:boundedness compactness Vg}
    Let $g\in\operatorname{Hol}(\mathbb C)$ and consider the operator $V_g$ given by \eqref{eq:definition Vg}.
    \begin{enumerate}
        \item[(1)] $V_g$ defines a bounded operator on $F^2$ if and only if $g(z)=a+bz+cz^2$ for all $z\in\mathbb C$ and some constants $a,b,c\in \mathbb C$.

        \item[(2)] $V_g$ defines a compact operator on $F^2$ if and only if $g(z)=a+bz$ for all $z\in\mathbb C$ and some constants $a,b\in \mathbb C$.
    \end{enumerate}    
\end{corollary}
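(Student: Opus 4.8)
The plan is to deduce the corollary directly from Theorem \ref{thm:boundedness compactness Volterra type} by specializing to the case $\varphi(z) = z$, after which the criteria collapse to a classical growth/Liouville argument for entire functions. The key simplification is that the exponential factor in the auxiliary function $R_{(g,\varphi)}$ from \eqref{eq:definition R(g,varphi)} becomes trivial: when $\varphi(z) = z$ we have $|\varphi(z)|^2 - |z|^2 = 0$, so $R_{(g,z)}(z) = \frac{|g'(z)|}{1+|z|}$. Thus Theorem \ref{thm:boundedness compactness Volterra type} says that $V_g$ is bounded if and only if $\sup_{z\in\mathbb C}\frac{|g'(z)|}{1+|z|}<\infty$, and $V_g$ is compact if and only if $\frac{|g'(z)|}{1+|z|}\to 0$ as $|z|\to\infty$.

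For part (1), I would read the boundedness condition as the growth estimate $|g'(z)| \le C(1+|z|)$ for some $C>0$ and all $z\in\mathbb C$. Since $g'$ is entire, the generalized Liouville theorem (obtained via Cauchy's estimates) forces $g'$ to be a polynomial of degree at most one, say $g'(z)=b+2cz$; integrating yields $g(z)=a+bz+cz^2$. The converse is immediate, since such a quadratic $g$ has $g'(z)=b+2cz$, for which $\frac{|b+2cz|}{1+|z|}\le |b|+2|c|$ is bounded.

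For part (2), I would rewrite compactness as $|g'(z)|=o(|z|)$ as $|z|\to\infty$. This is stronger than the boundedness condition, so part (1) already gives $g'(z)=b+2cz$. Letting $|z|\to\infty$ then shows $\frac{|b+2cz|}{1+|z|}\to 2|c|$, which can vanish only when $c=0$. Hence $g'$ is constant and $g(z)=a+bz$; the converse is again clear, as an affine $g$ gives $\frac{|b|}{1+|z|}\to 0$.

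The analytic weight of the statement is carried entirely by Theorem \ref{thm:boundedness compactness Volterra type}, so no genuine obstacle arises here. The only point requiring care is the application of the generalized Liouville theorem to pass from the linear growth bound on the entire function $g'$ to its being a degree-one polynomial, together with the sharp limit computation that pins down the degree in the compact case; everything else is routine bookkeeping.
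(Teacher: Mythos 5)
Your proposal is correct and follows essentially the same route as the paper, which presents this statement as an immediate corollary of Theorem \ref{thm:boundedness compactness Volterra type} by specializing to $\varphi(z)=z$, where $R_{(g,\varphi)}(z)=\frac{|g'(z)|}{1+|z|}$. The generalized Liouville argument (Cauchy estimates forcing $g'$ to be a polynomial of degree at most one, with the compact case pinning down $c=0$ via the limit $2|c|$) is exactly the standard completion the paper leaves implicit, and you have carried it out correctly.
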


Next, we analyze the question of when an operator $V_{(g,\varphi)}$ belongs to $\mathcal T$. We begin with the operators $V_g$.

\begin{proposition}\label{prop:Vz2 equals Toeplitz}
        Let $g(z)=\frac{1}{2}z^2$. Then the (bounded) operator $V_g$ defined on $F^2$ by \eqref{eq:definition Vg} coincides with the Toeplitz operator $T_{\phi}$, where $\phi(z)=\left(\frac{z}{|z|}\right)^2$. In particular, $V_g\in\mathcal T$.
\end{proposition}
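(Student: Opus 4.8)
The plan is to prove the operator identity $V_g = T_\phi$ by comparing matrix entries in the standard orthonormal basis $(e_n)_{n\in\mathbb N_0}$, and then to read off membership in $\mathcal T$ for free. First I would record boundedness of both sides: since $g(z)=\tfrac12 z^2$ is a polynomial of degree two, $V_g$ is bounded by the criterion in Corollary \ref{cor:boundedness compactness Vg}; and since $\phi(z)=(z/|z|)^2$ has modulus one away from the origin, we have $\phi\in L^\infty(\mathbb C)$, so $T_\phi$ is a genuine bounded Toeplitz operator. Because two bounded operators coincide as soon as all their entries $\langle \cdot\, e_n, e_m\rangle$ agree, it suffices to evaluate both operators on the basis.

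For the left-hand side, $g'(w)=w$ gives $V_g e_n(z)=\tfrac{1}{\sqrt{n!}}\int_0^z w^{n+1}\,dw=\tfrac{z^{n+2}}{(n+2)\sqrt{n!}}$, and rewriting $z^{n+2}=\sqrt{(n+2)!}\,e_{n+2}(z)$ collapses this to the clean formula $V_g e_n=\sqrt{\tfrac{n+1}{n+2}}\,e_{n+2}$; in particular $V_g$ shifts the basis by two indices. For the right-hand side I would compute $\langle \phi e_n, e_m\rangle$ in polar coordinates $w=re^{i\theta}$, where $\phi(w)=e^{2i\theta}$ and $d\mu(w)=\tfrac{1}{\pi}e^{-r^2}r\,dr\,d\theta$. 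The angular integral $\int_0^{2\pi}e^{i(n-m+2)\theta}\,d\theta$ vanishes unless $m=n+2$, so $T_\phi$ also shifts by exactly two indices; the surviving radial integral is a Gaussian moment which, after the substitution $u=r^2$, reduces to a value of the Gamma function and produces precisely $\langle \phi e_n, e_{n+2}\rangle=\sqrt{\tfrac{n+1}{n+2}}$. Matching the two computations yields $V_g=T_\phi$, and since $\phi\in L^\infty(\mathbb C)$, the operator $T_\phi$ is one of the generators of the Toeplitz algebra, whence $V_g\in\mathcal T$.

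The calculations are entirely elementary, so the only genuine obstacle is bookkeeping: one must carefully track the normalization constants relating $z^{n+2}$ to $e_{n+2}$ on the one side, and the factors $\tfrac{1}{\pi\sqrt{n!\,m!}}$, the angular $2\pi$, and the radial moment $\tfrac12(n+1)!$ on the other, so that both independent routes land on the identical factor $\sqrt{(n+1)/(n+2)}$. An alternative, more in the spirit of the rest of the paper, would be to compare the canonical integral kernels or the (bivariate) Berezin transforms of $V_g$ and $T_\phi$ and invoke injectivity of the Berezin transform; but the basis computation is the most transparent route, since it makes the common off-diagonal factor directly visible and simultaneously exhibits $V_g$ as a weighted shift.
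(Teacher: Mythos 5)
Your proposal is correct and takes essentially the same route as the paper's own proof: both compute $V_g e_n = \sqrt{\tfrac{n+1}{n+2}}\, e_{n+2}$ directly from the definition and evaluate $\langle T_\phi e_n, e_m\rangle$ in polar coordinates to obtain $\delta_{n+2,m}\sqrt{\tfrac{n+1}{n+2}}$, matching the matrix entries on the standard orthonormal basis. The only difference is that you make explicit the preliminary boundedness remarks (via Corollary \ref{cor:boundedness compactness Vg} and $\phi \in L^\infty(\mathbb C)$) that the paper leaves implicit, which is a harmless and slightly tidier presentation of the identical argument.
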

\begin{proof}
        This follows easily by observing the action of the operators $V_g$ and $T_\phi$ on the standard orthonormal basis of $F^2$.
        Indeed, for $n\in\mathbb N_0$ one has
        \begin{align*}
            V_g(e_n)(z)=\frac{1}{\sqrt{n!}}\int_0^z w^{n+1} ~dw
            =\frac{1}{\sqrt{n!}}\cdot \frac{z^{n+2}}{n+2}=\sqrt{\frac{n+1}{n+2}} e_{n+2}(z).
        \end{align*}
        On the other hand, integrating in polar coordinates we get for any $n,m\in\mathbb N_0$:
        \begin{align*}
            \langle T_\phi e_n, e_m\rangle & = \frac{1}{\sqrt{n!m!}}\int_{\mathbb C} \left(\frac{z}{|z|}\right)^2 z^n \overline{z^m} ~d\mu(z)\\
            &=\frac{1}{\sqrt{n!m!}}\frac{1}{\pi}\int_0^\infty \int_0^{2\pi} e^{i(n+2-m)\theta} r^{n+m+1}e^{-r^2}~d\theta dr\\
            &=\delta_{n+2,m} \frac{1}{\sqrt{n!(n+2)!}}
            \int_0^\infty r^{n+1} e^{-r} ~dr\\
            &=\delta_{n+2,m}\sqrt{\frac{n+1}{n+2}}.\qedhere
        \end{align*}
\end{proof}
By writing down the definitions of the corresponding operators in the above proposition, we obtain the identity:
\begin{align*}
        \int_{\mathbb C} \left(\frac{w}{|w|}\right)^2f(w) e^{z\overline{w}} ~d\mu(w) = \int_0^z f(w) w ~dw,\quad f\in F^2, z\in\mathbb C.
\end{align*}
Furthermore, as a by-product of the above computations, one readily sees that the adjoint $V_{z^2/2}^\ast=T_\phi^\ast=T_{\overline{\phi}}$ is given by
\begin{equation}\label{eq:Tphiadj}
        T_\phi^\ast (e_n)=
        \begin{cases}
            \sqrt{\frac{n-1}{n}} e_{n-2},& \quad n\geq 2,\\
            0,& \quad \text{otherwise}.
        \end{cases}
\end{equation}
In particular, it holds
\begin{align*}
        T_\phi^\ast T_\phi (e_n)=\frac{n+1}{n+2} e_n, \quad n\in\mathbb N_0,
\end{align*}
so that $V_{z^2/2}^\ast V_{z^2/2}=T_\phi^\ast T_\phi=I+K$, where $K$ is a compact diagonal operator.

\begin{corollary}
        Let $g\in\operatorname{Hol}(\mathbb C)$. The operator $V_g$ is bounded in $F^2$ if and only if $V_g\in\mathcal T$.
\end{corollary}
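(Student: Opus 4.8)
The plan is to establish the nontrivial implication, namely that mere boundedness of $V_g$ already forces $V_g \in \mathcal{T}$; the reverse implication is immediate, since by definition $\mathcal{T} \subset \mathcal{L}(F^2)$ consists of bounded operators.

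First I would invoke Corollary \ref{cor:boundedness compactness Vg}: if $V_g$ is bounded, then $g$ must be a polynomial of degree at most two, say $g(z) = a + bz + cz^2$ with $a,b,c \in \mathbb{C}$. Since only $g'$ enters the definition \eqref{eq:definition Vg}, the constant term $a$ is irrelevant, and $g'(w) = b + 2cw$, so that
\begin{align*}
    V_g f(z) = \int_0^z f(w)(b + 2cw)\,dw = b \int_0^z f(w)\,dw + 2c \int_0^z f(w)\,w\,dw.
\end{align*}
In other words $V_g = b\, V_z + 2c\, V_{z^2/2}$, a linear combination of the two distinguished Volterra operators already isolated in the preceding discussion.

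Next I would identify each summand as an element of $\mathcal{T}$. For the quadratic part, Proposition \ref{prop:Vz2 equals Toeplitz} gives $V_{z^2/2} = T_\phi \in \mathcal{T}$ outright. For the linear part, the operator $V_z$ corresponds to $g(z) = z$, which is of the form $a+bz$; hence by Corollary \ref{cor:boundedness compactness Vg}(2) it is compact, and compact operators lie in $\mathcal{T}$ by Theorem \ref{thm:compactness}. Because $\mathcal{T}$ is a linear subspace of $\mathcal{L}(F^2)$, it follows that $V_g = b\, V_z + 2c\, V_{z^2/2} \in \mathcal{T}$, which finishes the argument.

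I do not anticipate any genuine obstacle: the statement is essentially an assembly of the two preceding results together with the degree restriction of Corollary \ref{cor:boundedness compactness Vg}. The only point that deserves a line of care is the observation that $V_g$ depends on $g$ solely through $g'$, so that the constant term plays no role and the splitting of $V_g$ into its linear and quadratic pieces is exact rather than merely approximate.
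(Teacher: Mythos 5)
Your proposal is correct and follows essentially the same route as the paper: the paper likewise applies Corollary \ref{cor:boundedness compactness Vg} to get $g(z)=a+bz+cz^2$, splits $V_g=V_{a+bz}+2cT_\phi$ (which is your $bV_z+2cV_{z^2/2}$, since $V_{a+bz}=bV_z$ and $V_{z^2/2}=T_\phi$ by Proposition \ref{prop:Vz2 equals Toeplitz}), and concludes via compactness of the linear part. Your explicit remarks on the triviality of the converse and the irrelevance of the constant term are sound, if slightly more detailed than the paper's one-line argument.
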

\begin{proof}
        If $V_g$ is bounded, then by Corollary \ref{cor:boundedness compactness Vg}, we have $g(z)=a+bz+cz^2$, for some constants $a,b,c\in\mathbb C$. Then we have
        $V_g=V_{a+bz}+2cT_\phi$, where the left-hand summand is a compact operator by Corollary \ref{cor:boundedness compactness Vg}. Hence, $V_g\in \mathcal T$.
\end{proof}

Moreover, using Proposition \ref{prop:Vz2 equals Toeplitz} and our previous results on weighted composition operators, we can give a complete characterization for Volterra-type operators belonging to the Toeplitz algebra:
\begin{theorem}\label{thm:volterra_toeplitzalg}
    Let $g,\varphi\in \operatorname{Hol}(\mathbb C)$. The Volterra-type operator $V_{(g,\varphi)}$ belongs to $\mathcal T$ if and only if one of the two following conditions hold:
    \begin{enumerate}
        \item[(1)] $V_{(g,\varphi)}$ is compact (and hence characterized by (2) in Theorem \ref{thm:boundedness compactness Volterra type}).

        \item[(2)] There are constants $a,b,c\in \mathbb C$ such that $g'(z)=a+bze^{z\overline{c}}$ and $\varphi(z)=z-c$. In this case, we have
            \begin{align}\label{eq:thm form Volterra operators in T}
            V_{(g,\varphi)} =  a V_{z} W_{1,\varphi}  + b e^{\frac{|c|^2}{2}}V_{z^2/2} W_c,
        \end{align}
        where $W_c$ denotes the Weyl operator associated to $c\in \mathbb C$ and the operators $V_{z}$ and $V_{z^2/2}$ are given by \eqref{eq:definition Vg}.
    \end{enumerate}
\end{theorem}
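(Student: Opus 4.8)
The plan is to prove the two implications separately, with nearly all of the work falling on the forward direction. The reverse direction is short: if $V_{(g,\varphi)}$ is compact it lies in $\mathcal T$ because $\mathcal K(F^2)\subset\mathcal T$ (Theorem \ref{thm:compactness}). If instead (2) holds, I would verify the operator identity \eqref{eq:thm form Volterra operators in T} by inserting $\varphi(w)=w-c$ and the prescribed $g'$ into the defining integral \eqref{eq:definition V(g,varphi)} and splitting it into two pieces; recognizing $f(w-c)e^{w\overline c}=e^{|c|^2/2}(W_cf)(w)$ turns the second piece into $be^{|c|^2/2}V_{z^2/2}W_c$. Each summand on the right-hand side then visibly lies in $\mathcal T$: $V_{z^2/2}=T_\phi\in\mathcal T$ by Proposition \ref{prop:Vz2 equals Toeplitz}, the Weyl operator $W_c$ is a Toeplitz operator, $\mathcal T$ is an algebra, and the remaining summand is compact since $V_z$ is compact by Corollary \ref{cor:boundedness compactness Vg}. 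Since membership in $\mathcal T$ presupposes $V_{(g,\varphi)}\in\mathcal L(F^2)$, I would keep track of the constraints that boundedness, read off from \eqref{eq:definition R(g,varphi)}, places on the parameters so that the right-hand side is genuinely bounded.

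For the forward direction assume $V_{(g,\varphi)}\in\mathcal T$; in particular it is bounded, so Theorem \ref{thm:boundedness compactness Volterra type} gives $\varphi(z)=p+\lambda z$ with $|\lambda|\le1$ and $\sup_z R_{(g,\varphi)}(z)<\infty$. Using $|\varphi(z)|^2-|z|^2=(|\lambda|^2-1)|z|^2+2\operatorname{Re}(\overline p\lambda z)+|p|^2$, boundedness is equivalent to the pointwise estimate $|g'(z)\,e^{\overline p\lambda z}|\le C(1+|z|)e^{\frac12(1-|\lambda|^2)|z|^2}$. The heart of the proof is to show that a non-compact operator in $\mathcal T$ must have $\lambda=1$. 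From the reproducing-kernel identity $\widetilde A(z)=e^{-|z|^2/2}(Ak_z)(z)$ and the parametrization $w=sz$ of the segment $[0,z]$ one obtains
\[
\widetilde{V_{(g,\varphi)}}(z)=z\,e^{p\overline z-|z|^2}\int_0^1 g'(sz)\,e^{s\lambda|z|^2}\,ds.
\]
Inserting the boundedness estimate for $g'$ and evaluating the $s$-integral by its endpoint $s=1$, the Gaussian rate in $|z|^2$ works out to
\[
-1+\tfrac12(1-|\lambda|^2)+\operatorname{Re}\lambda=-\tfrac12|\lambda-1|^2,
\]
which, since $|\lambda|\le1$, is strictly negative unless $\lambda=1$. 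Hence $\widetilde{V_{(g,\varphi)}}\in C_0(\mathbb C)$ for every $\lambda\neq1$, and by Theorem \ref{thm:compactness} any such operator in $\mathcal T$ is then compact; so a non-compact member of $\mathcal T$ has $\lambda=1$.

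It remains to treat $\lambda=1$. Here the boundedness estimate reads $|g'(z)e^{-\overline c z}|\le C(1+|z|)$ (with $c:=-p$), so $z\mapsto g'(z)e^{-\overline c z}$ is entire of at most linear growth, hence affine; thus $g'$ is an affine polynomial times $e^{\overline c z}$, i.e.\ the multiplier appearing in (2), and $\varphi(z)=z-c$. The decomposition \eqref{eq:thm form Volterra operators in T} then exhibits $V_{(g,\varphi)}$ as a combination of $V_z$, $V_{z^2/2}=T_\phi$ and $W_c$, placing it in case (2), the compact sub-case (vanishing $V_{z^2/2}$-coefficient) already being covered by (1).

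I expect the main obstacle to be the Berezin-transform estimate above: one must control the $s$-integral uniformly in the argument of $z$ and check that the linear-in-$z$ terms are dominated by the Gaussian factor $e^{-\frac12|\lambda-1|^2|z|^2}$, so that vanishing at infinity holds in every direction (note that this already disposes of the non-compact operators with $|\lambda|<1$, which do not lie in $\mathcal T$). The Liouville-type growth step and the verification of \eqref{eq:thm form Volterra operators in T} are comparatively routine; as an alternative to the explicit Berezin computation one could instead test $V_{(g,\varphi)}$ against sufficiently localized operators exactly as in the proof of Theorem \ref{Theorem-distance-from-T-algebra}.
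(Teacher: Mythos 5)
Your proof is essentially correct, and it takes a genuinely different route from the paper. The paper never computes the Berezin transform of $V_{(g,\varphi)}$: it writes $g'(w)=g'(0)+wh(w)$, decomposes $V_{(g,\varphi)}=g'(0)V_zW_{1,\varphi}+V_{z^2/2}W_{h,\varphi}$, uses $V_{z^2/2}^\ast V_{z^2/2}=I+K$ with $K$ compact together with $V_{z^2/2}^\ast=T_{\overline{\phi}}\in\mathcal T$ to conclude $W_{h,\varphi}\in\mathcal T$ from $V_{z^2/2}W_{h,\varphi}\in\mathcal T$, and then invokes the classification of weighted composition operators in $\mathcal T$ (Corollary \ref{cor:weighted composition op in Toeplitz algebra}). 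You instead kill all $\lambda\neq 1$ directly via the decay of $\widetilde{V_{(g,\varphi)}}$ plus Theorem \ref{thm:compactness}, and pin down $g'$ for $\lambda=1$ by a Liouville argument; this bypasses the weighted-composition classification entirely and even shows more, namely that \emph{every} bounded $V_{(g,\varphi)}$ with $\lambda=1$ lies in $\mathcal T$. Your endpoint evaluation of the $s$-integral is the only delicate spot: with $\psi(s)=\frac12(1-|\lambda|^2)s^2+s\operatorname{Re}\lambda$ convex and $\psi(0)=0$, one has $\max_{[0,1]}\psi=\max\{0,\psi(1)\}$, so when $\operatorname{Re}\lambda<0$ the endpoint $s=0$ dominates and the Gaussian rate is $\max\{-1,-\frac12|\lambda-1|^2\}$ rather than $-\frac12|\lambda-1|^2$; this is still strictly negative for $\lambda\neq1$ and the linear terms are uniform in $s$ and in the direction of $z$, so the step goes through.

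There is, however, one point you glossed over, and it is instructive: your Liouville step yields $g'(z)=(\alpha+\beta z)e^{\overline{c}z}$, which you then call ``the multiplier appearing in (2)'' --- but condition (2) as printed reads $g'(z)=a+bze^{z\overline{c}}$, a \emph{different} family when $c\neq0$ (a genuine constant $a$ versus the term $\alpha e^{\overline{c}z}$). Your form is in fact the correct one. For instance, $g'(z)=(1+z)e^{z\overline{c}}$ and $\varphi(z)=z-c$ with $c\neq0$ give
\begin{align*}
    V_{(g,\varphi)}=e^{\frac{|c|^2}{2}}\bigl(V_z+V_{z^2/2}\bigr)W_c,
\end{align*}
a bounded, non-compact element of $\mathcal T$ whose $g'$ is not of the printed form; conversely, the printed form with $a\neq0\neq c$ produces an unbounded operator, since the factor $W_{1,\varphi}$ in \eqref{eq:thm form Volterra operators in T} satisfies $M_z(1,\varphi)=e^{-2\operatorname{Re}(z\overline{c})+|c|^2}\notin L^\infty$ --- precisely the parameter constraint you flagged in your reverse direction. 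This mismatch also marks a soft spot in the paper's own proof: the asymptotic $M_w(h,\varphi)\sim|R_{(g,\varphi)}(w)|^2$ replaces $|g'(w)-g'(0)|$ by $|g'(w)|$, which fails when $g'(0)\neq0$ and $c\neq0$ (in the example above, $M_w(h,\varphi)$ blows up along directions where $\operatorname{Re}(\overline{c}w)\to-\infty$, so $W_{h,\varphi}$ is unbounded). The compact extra summand $\alpha e^{\frac{|c|^2}{2}}V_zW_c$ produced by your decomposition is exactly what the printed case (2) omits. So: do not identify your family with the printed one; state the classification with $g'(z)=(a+bz)e^{z\overline{c}}$ and the decomposition $V_{(g,\varphi)}=ae^{\frac{|c|^2}{2}}V_zW_c+be^{\frac{|c|^2}{2}}V_{z^2/2}W_c$, in which the $a$-term is compact and non-compactness corresponds to $b\neq0$. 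With that correction made explicit, your argument is complete and, on this point, more reliable than the paper's.
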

\begin{proof}
    Using Proposition \ref{prop:Vz2 equals Toeplitz}, one sees that compact operators and operators of the form \eqref{eq:thm form Volterra operators in T} belong to $\mathcal T$.

    Conversely, suppose that $V_{(g,\varphi)}\in\mathcal T$. In particular, it is bounded on $F^2$ and thus, from
    \begin{align*}
        (V_{(g,\varphi)}(1))(z)=\int_0^z g'(w) ~dw = g(z)-g(0), \quad z\in\mathbb C,
    \end{align*}
    we get that $g\in F^2$.
    Further, by Theorem \ref{thm:boundedness compactness Volterra type} we have
    $\sup_{w\in\mathbb C}|R_{(g,\varphi)}(w)|^2<\infty$ and we know that $\varphi(z)=\lambda z + a$, for some $a,\lambda\in\mathbb C$ with $|\lambda|\leq 1$.
    
Write $g'(w)=g'(0) + w h(w)$, where $h\in\operatorname{Hol}(\mathbb C)$.
Using that $g\in F^2$ and its Taylor series expansion, one can easily check that $h\in F^2$ as well. On the other hand, for $|w|$ large enough we have
\begin{align*}
        M_w(h,\varphi)=|h(w)|^2 e^{|\varphi(w)|^2-|w|^2}
        \sim
        \left(\frac{|w h(w)|}{1+|w|}\right)^2 e^{|\varphi(w)|^2-|w|^2}
        \sim
        |R_{(g,\varphi)}(w)|^2,
\end{align*}
where $M_w(h,\varphi)$ is given by \eqref{eq:mz}, from which we see that $M(h,\varphi)=\sup_{w\in\mathbb C}M_w(h,\varphi)<\infty$.
Therefore, by Theorem \ref{Theorem-properties-weighted-composition-operators} we conclude that the weighted composition operator $W_{h,\varphi}$ is bounded.

Similarly, we have
\begin{align*}
        \left(\frac{|g'(0)|}{1+|w|}\right)^2e^{|\varphi(w)|^2-|w|^2} \lesssim |R_{(g,\varphi)}(w)|^2
\end{align*}
and so $\operatorname{sup}_{w\in\mathbb C} \left(\frac{|g'(0)|}{1+|w|}\right)^2e^{|\varphi(w)|^2-|w|^2} < \infty$.
Under the condition $\varphi(z)=\lambda z + a$, this is easily seen to be equivalent to
\begin{align*}
        M(1,\varphi)=\sup_{w\in\mathbb C} e^{|\varphi(w)|^2-|w|^2} < \infty.
\end{align*}
Therefore, the weighted composition operator $W_{1,\varphi}$ is also bounded.

Observe now that $V_{(g,\varphi)}$ can be written as
\begin{align*}
    V_{(g,\varphi)} = g'(0) V_{z} W_{1,\varphi}  + V_{z^2/2} W_{h,\varphi}.
\end{align*}
By Theorem \ref{cor:boundedness compactness Vg} and the boundedness of $W_{1,\varphi}$, the first summand is compact. In particular, we have $V_{z^2/2} W_{h,\varphi}\in \mathcal T$.

Finally, since $I- V_{z^2/2}^\ast V_{z^2/2} $ is compact, as was noted before,
and $V_{z^2/2}^\ast \in \mathcal T$, we conclude that
\begin{align*}
        W_{h,\varphi} = ( I- V_{z^2/2}^\ast V_{z^2/2} ) W_{h,\varphi} + V_{z^2/2}^\ast V_{z^2/2} W_{h,\varphi} \in\mathcal T,
\end{align*}
Hence, by Corollary \ref{cor:weighted composition op in Toeplitz algebra}, the operator $W_{h,\varphi}$ is either compact or a multiple of a Weyl operator, which implies conditions (1) and (2), respectively.
\end{proof}
An immediate application of the previous result, together with Proposition \ref{prop:Vz2 equals Toeplitz}, is the following fact:
\begin{corollary}
    Assume $g, \varphi \in \operatorname{Hol}(\mathbb C)$ such that $V_{(g, \varphi)} \in \mathcal T$. If $V_{(g, \varphi)}$ is not compact, then it is Fredholm with
    \begin{align*}
        \operatorname{ind} V_{(g, \varphi)} = -2.
    \end{align*}
\end{corollary}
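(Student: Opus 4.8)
The plan is to invoke Theorem \ref{thm:volterra_toeplitzalg} to pin down the exact form of $V_{(g,\varphi)}$ and then reduce the index computation to that of a single explicit operator. Since $V_{(g,\varphi)} \in \mathcal T$ is assumed to be non-compact, alternative (1) of Theorem \ref{thm:volterra_toeplitzalg} is ruled out, so we are necessarily in alternative (2), and hence (with $\varphi(z) = z-c$)
$$
V_{(g,\varphi)} = a\, V_z W_{1,\varphi} + b\, e^{|c|^2/2}\, V_{z^2/2} W_c.
$$
First I would record that the first summand is compact: as established in the proof of Theorem \ref{thm:volterra_toeplitzalg}, $V_z$ is compact by Corollary \ref{cor:boundedness compactness Vg} and the accompanying factor is bounded, so $a\, V_z W_{1,\varphi} \in \mathcal K(F^2)$. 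Consequently the non-compactness of $V_{(g,\varphi)}$ forces $b \neq 0$.

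Next I would use Proposition \ref{prop:Vz2 equals Toeplitz} to replace $V_{z^2/2}$ by the Toeplitz operator $T_\phi$ with $\phi(z) = (z/|z|)^2$, giving
$$
V_{(g,\varphi)} = b\, e^{|c|^2/2}\, T_\phi W_c + K
$$
for some $K \in \mathcal K(F^2)$. Fredholmness and the Fredholm index are unchanged under compact perturbations and under multiplication by the nonzero scalar $b\, e^{|c|^2/2}$; moreover $W_c$ is unitary, hence Fredholm of index $0$, and the index is additive on products. Therefore it suffices to prove that $T_\phi$ is Fredholm with $\operatorname{ind} T_\phi = -2$, since then $\operatorname{ind} V_{(g,\varphi)} = \operatorname{ind}(T_\phi W_c) = \operatorname{ind} T_\phi = -2$.

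Finally I would compute the index of $T_\phi$ directly from the action on the standard basis already recorded in the proof of Proposition \ref{prop:Vz2 equals Toeplitz}, namely
$$
T_\phi e_n = \sqrt{\tfrac{n+1}{n+2}}\, e_{n+2}, \qquad n \in \mathbb N_0.
$$
This presents $T_\phi$ as a weighted forward shift by two steps whose weights lie in $[\,2^{-1/2},1)$ and hence are bounded away from $0$; thus $T_\phi$ is injective, so $\ker T_\phi = \{0\}$, and it is bounded below, so its range is the closed subspace $\overline{\operatorname{span}}\{e_n : n \geq 2\}$, which has codimension $2$. Equivalently, one can use the relation $T_\phi^\ast T_\phi = I + K$ with $K$ compact noted above to see that $T_\phi$ is Fredholm. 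Hence $\operatorname{ind} T_\phi = \dim \ker T_\phi - \dim \operatorname{coker} T_\phi = 0 - 2 = -2$, completing the proof.

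The reduction steps are entirely routine once the decomposition from Theorem \ref{thm:volterra_toeplitzalg} is in hand; the only point requiring genuine attention is the index of the model operator $T_\phi$. The essential observation is that $T_\phi$ shifts the basis by \emph{two} indices rather than one, which is exactly what makes the index $-2$ (and, correspondingly, what makes every non-compact $V_{(g,\varphi)} \in \mathcal T$ a Fredholm operator of index $-2$).
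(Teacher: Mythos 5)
Your proposal is correct and follows essentially the same route as the paper: invoke Theorem \ref{thm:volterra_toeplitzalg} to reduce the non-compact case to $b\,e^{|c|^2/2}V_{z^2/2}W_c$ plus a compact perturbation, identify $V_{z^2/2}=T_\phi$ via Proposition \ref{prop:Vz2 equals Toeplitz}, and compute $\operatorname{ind}T_\phi=-2$ from the two-step weighted shift action on the standard basis. The only cosmetic difference is that the paper obtains the cokernel dimension by computing $\ker(T_{\overline\phi})=\operatorname{span}\{e_0,e_1\}$ from Eq.\ \eqref{eq:Tphiadj}, whereas you argue via boundedness below and closed range (or the relation $T_\phi^\ast T_\phi=I+K$) --- an equivalent verification of the same fact.
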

\begin{proof}
    As in the proof of Proposition \ref{prop:Vz2 equals Toeplitz}, we see that $V_{\frac{1}{2}z^2} = T_\phi$ is off-diagonal with respect to the standard basis:
    \begin{align*}
        T_\phi e_n = \sqrt{\frac{n+1}{n+2}} e_{n+2}, \quad n \in \mathbb N_0.
    \end{align*}
    For the adjoint operator, $V_{\frac{1}{2}z^2}^\ast = T_\phi^\ast = T_{\overline{\phi}}$, we recall from Eq.\ \eqref{eq:Tphiadj}:
    \begin{align*}
        T_{\overline{\phi}} e_n = \begin{cases}
            \sqrt{\frac{n-1}{n}}e_{n-2}, \quad &n \geq 2,\\
            0, \quad &n = 0, 1.\\
        \end{cases}
    \end{align*}
    Using these mapping properties, it is not hard to verify that 
    \begin{align*}
        \operatorname{ker}(T_\phi) = \{ 0\}, \quad \operatorname{ker}(T_{\overline{\phi}}) = \operatorname{span} \{ e_0, e_1\}.
    \end{align*}
    Hence, $\operatorname{ind}(T_\phi) = -2$. If we now assume that $V_{(g, \varphi)} \in \mathcal T$ is not compact, then by Theorem \ref{thm:volterra_toeplitzalg} we can write $V_{(g, \varphi)}$ as follows, where $b \neq 0$ and $c \in \mathbb C$:
    \begin{align*}
        V_{(g, \varphi)} = aV_z W_{1, \varphi} + be^{\frac{|c|^2}{2}} V_{z^2/2}W_c.
    \end{align*}
    Since $V_z$ is compact and $W_c$ is unitary, we see that $V_{(g, \varphi)}$ is Fredholm with
    \begin{align*}
        \operatorname{ind}(V_{(g, \varphi)}) &= \operatorname{ind}(V_{z^2/2} W_c)\\
        &= \operatorname{ind}(V_{z^2/2}) + \operatorname{ind}(W_c)\\
        &= \operatorname{ind}(T_\phi) = -2.\qedhere
    \end{align*}
\end{proof}
We end this section by computing the Berezin transform of the operator $V_{(g, \varphi)}$, assuming that it is a bounded operator. The Berezin transform seems not to be contained in the literature so far. For doing this computation, we need the following auxiliary fact:
\begin{lemma}
    For $k \in \mathbb N$ we denote by $A^{[k]}$ the linear operator obtained by extending
    \begin{align*}
        A^{[k]} e_n = \frac{1}{\sqrt{(n+1)(n+2)\dots (n+k)}} e_{n+k}
    \end{align*}
    linearly. Then, $A^{[k]}$ is compact on $F^2$ and $\| A^{[k]}\| = \frac{1}{\sqrt{k!}}$ and satisfies $\frac{\partial^k}{\partial z^k} A^{[k]} g(z) = g(z)$.
\end{lemma}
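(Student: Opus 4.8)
The plan is to recognize $A^{[k]}$ as a weighted shift by $k$ with respect to the orthonormal basis $(e_n)_{n \in \mathbb N_0}$ and to exploit the clean closed form of its weights. First I would rewrite the weights: since $(n+1)(n+2)\cdots(n+k) = (n+k)!/n!$, the defining relation becomes
\begin{align*}
A^{[k]} e_n = c_n\, e_{n+k}, \qquad c_n := \sqrt{\frac{n!}{(n+k)!}}.
\end{align*}
Because the vectors $(e_{n+k})_{n \in \mathbb N_0}$ form an orthonormal subfamily of the basis, for $g = \sum_n a_n e_n \in F^2$ we have $\| A^{[k]} g\|^2 = \sum_n |a_n|^2 c_n^2 \leq (\sup_n c_n^2) \| g\|^2$, so $A^{[k]}$ is bounded with $\| A^{[k]}\| \leq \sup_n c_n$; evaluating on $e_0$ turns this into an equality. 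The sequence $(c_n)$ is strictly decreasing (each factor $n+j$ grows with $n$), so $\sup_n c_n = c_0 = 1/\sqrt{k!}$, which gives the claimed norm.

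For compactness I would use finite-rank truncation. Let $A_N^{[k]}$ agree with $A^{[k]}$ on $\operatorname{span}\{ e_0, \dots, e_N\}$ and vanish on the orthogonal complement; each $A_N^{[k]}$ has finite rank. The same orthogonality computation yields $\| A^{[k]} - A_N^{[k]}\| = \sup_{n > N} c_n = c_{N+1}$, and since $c_n \to 0$ as $n \to \infty$, the operator $A^{[k]}$ is a norm-limit of finite-rank operators, hence compact.

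Finally, for the differentiation identity the key computation is on basis vectors: differentiating $e_{n+k}(z) = z^{n+k}/\sqrt{(n+k)!}$ exactly $k$ times gives
\begin{align*}
\frac{\partial^k}{\partial z^k} e_{n+k} = \sqrt{\frac{(n+k)!}{n!}}\, e_n = c_n^{-1} e_n,
\end{align*}
so that $\frac{\partial^k}{\partial z^k}(A^{[k]} e_n) = e_n$. For general $g = \sum_n a_n e_n$ we have $A^{[k]} g = \sum_n a_n c_n e_{n+k}$ with convergence in $F^2$, and the one point deserving care is that this is also an identity of entire functions which may be differentiated term by term. This is justified because norm convergence in the reproducing-kernel space $F^2$ forces locally uniform convergence of the functions together with all their derivatives, whence $\frac{\partial^k}{\partial z^k}(A^{[k]} g) = \sum_n a_n e_n = g$ pointwise. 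I do not expect a genuine obstacle here; beyond this justification of termwise differentiation, the entire argument is a direct computation with the explicit weights $c_n$.
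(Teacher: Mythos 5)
Your proof is correct and follows essentially the same route as the paper: bounding the norm via the decreasing weight sequence with equality attained at $e_0$, compactness from the weights tending to zero, and verification of the differentiation identity on basis vectors. You actually supply two details the paper leaves implicit — the finite-rank truncation argument and the justification (via reproducing-kernel point evaluations) that termwise differentiation of the $F^2$-convergent series is legitimate — which only strengthens the write-up.
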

\begin{proof}
    $\| A^{[k]}\| \leq \frac{1}{\sqrt{k!}}$ is readily verified by considering the expansion of arbitrary elements of $F^2$ into the standard orthonormal basis. Further, $\| A^{[k]} e_0\| = \frac{1}{\sqrt{k!}}$, showing equality of the norm. $A^{[k]}$ is clearly compact, as
    \begin{align*}
        \frac{1}{\sqrt{(n+1)(n+2)\dots (n+k)}} \to 0, \quad n \to \infty.
    \end{align*}
    Finally, $\frac{\partial^k}{\partial z^k} A^{[k]} e_m(z) = e_m(z)$ follows immediately for every $m \in \mathbb N_0$, hence the equality follows for all $g \in F^2$. 
\end{proof}
For convenience, we will also write $A^{[0]} = I$ in the following. 
\begin{lemma}
    Let $z \in \mathbb C$ and assume that $g, \varphi \in \operatorname{Hol}(\mathbb C)$ such that $V_{(g, \varphi)}$ is bounded. Then, $V_{(g, \varphi)} K_w (z) = K_w(\varphi(z)) \sum_{k=0}^\infty (-\lambda \overline{w})^k A^{[k]} g(z)$.
\end{lemma}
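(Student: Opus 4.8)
The plan is to reduce the statement to a one–variable computation and then verify it by matching a first–order differential equation (equivalently, by repeated integration by parts). Since $V_{(g,\varphi)}$ is assumed bounded, Theorem~\ref{thm:boundedness compactness Volterra type} gives $\varphi(u)=a+\lambda u$ with $|\lambda|\le 1$, so that $K_w(\varphi(u))=e^{\overline w(a+\lambda u)}$ and, by the very definition \eqref{eq:definition V(g,varphi)},
\begin{align*}
V_{(g,\varphi)}K_w(z)=\int_0^z K_w(\varphi(u))\,g'(u)\,du = e^{a\overline w}\int_0^z e^{\nu u}\,g'(u)\,du, \qquad \nu:=\lambda\overline w .
\end{align*}
Writing the proposed right-hand side as $r(z):=K_w(\varphi(z))\,G(z)$ with $G(z):=\sum_{k=0}^\infty(-\nu)^k A^{[k]}g(z)$, both $r$ and $V_{(g,\varphi)}K_w$ are entire functions of $z$, so it suffices to check that they solve the same initial value problem.

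First I would record the three properties of the operators $A^{[k]}$ that drive the argument, all immediate from the preceding lemma and its proof: $A^{[0]}g=g$; the derivative identity $\frac{d}{dz}A^{[k]}g=A^{[k-1]}g$ for $k\ge 1$ (which follows from $\frac{d}{dz}e_{n+k}=\sqrt{n+k}\,e_{n+k-1}$); and $A^{[k]}g(0)=0$ for $k\ge 1$, since each $A^{[k]}e_n$ is a monomial of positive degree. Granting term-by-term differentiation of the series defining $G$, the $k=0$ term contributes $g'$ while the remaining terms shift index, yielding the key identity $G'(z)=g'(z)-\nu\,G(z)$. Differentiating $r$ and using $\frac{d}{dz}K_w(\varphi(z))=\nu\,K_w(\varphi(z))$ then gives
\begin{align*}
r'(z)=\nu K_w(\varphi(z))G(z)+K_w(\varphi(z))\bigl(g'(z)-\nu G(z)\bigr)=K_w(\varphi(z))\,g'(z)=\bigl(V_{(g,\varphi)}K_w\bigr)'(z).
\end{align*}
Hence $r-V_{(g,\varphi)}K_w$ is constant, and comparing at $z=0$ (where $V_{(g,\varphi)}K_w(0)=0$ and $r(0)=e^{a\overline w}g(0)$) finishes the proof; this forces one to read the statement with the normalization $g(0)=0$, which is harmless since $V_{(g,\varphi)}$ depends on $g$ only through $g'$. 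As an alternative route avoiding the ODE, I would iterate integration by parts on $\int_0^z e^{\nu u}g'(u)\,du$, at each step integrating $A^{[k]}g=\frac{d}{dz}A^{[k+1]}g$ and differentiating $e^{\nu u}$; this produces exactly the partial sums $e^{\nu z}\sum_{k=0}^{N}(-\nu)^k A^{[k]}g(z)$ together with a remainder $(-\nu)^{N+1}\int_0^z e^{\nu u}A^{[N+1]}g(u)\,du$.

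The main obstacle is analytic rather than algebraic: justifying convergence of the series for $G$ and the vanishing of the remainder (equivalently, legitimizing the term-by-term differentiation). Here the estimate $\|A^{[k]}\|=1/\sqrt{k!}$ from the previous lemma is decisive. Together with $g\in F^2$ — which holds because $V_{(g,\varphi)}1=g-g(0)$ lies in $F^2$ for bounded $V_{(g,\varphi)}$ — and the pointwise bound $|f(z)|\le\|f\|\,e^{|z|^2/2}$ valid on $F^2$, one obtains $|A^{[k]}g(z)|\le \|g\|\,e^{|z|^2/2}/\sqrt{k!}$. This makes $\sum_k(-\nu)^k A^{[k]}g$ converge absolutely and locally uniformly, so $G$ is entire and may be differentiated termwise by the Weierstrass theorem, and it bounds the integration-by-parts remainder by $|\nu|^{N+1}/\sqrt{(N+1)!}$ times a factor that is bounded on compact sets, which tends to $0$. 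With these estimates in place both presentations go through routinely.
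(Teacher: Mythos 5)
Your proof is correct, but your primary argument takes a genuinely different route from the paper. The paper makes the same normalization $g(0)=0$ at the outset, then performs a \emph{single} integration by parts to obtain the recursion $V_{(g,\varphi)}K_w(z) = K_w(\varphi(z))\,g(z) - \lambda\overline{w}\,V_{(A^{[1]}g,\varphi)}K_w(z)$ (after disposing of the degenerate cases $w=0$ and $\lambda=0$ by direct computation), and concludes ``by induction,'' citing the norm estimate $\|A^{[k]}\| = 1/\sqrt{k!}$ only to guarantee $F^2$-convergence of the resulting series. Your second, alternative route is exactly this argument with the remainder term $(-\nu)^{N+1}\int_0^z e^{\nu u}A^{[\cdot]}g(u)\,du$ written out and estimated explicitly (up to a harmless index shift in which $A^{[N]}$ versus $A^{[N+1]}$ appears in the remainder, depending on bookkeeping) --- this actually fills in a step the paper leaves terse, since the induction only yields partial sums plus a remainder, and one must see the remainder vanish to get the infinite series pointwise. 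Your primary route --- verifying that $r(z)=K_w(\varphi(z))G(z)$ and $V_{(g,\varphi)}K_w$ solve the same first-order ODE $y'=K_w(\varphi(z))g'(z)$-matching, i.e.\ checking $G'=g'-\nu G$ via the shift identity $\frac{d}{dz}A^{[k]}g = A^{[k-1]}g$ and comparing values at $z=0$ --- is not in the paper at all. It buys a uniform treatment of the cases $w=0$ and $\lambda=0$ (which the paper handles separately), replaces the induction by a one-line uniqueness argument, and isolates the analytic content cleanly in the local uniform convergence of $G$, which you justify correctly from $\|A^{[k]}g\|\leq \|g\|/\sqrt{k!}$, the reproducing-kernel bound $|f(z)|\leq\|f\|e^{|z|^2/2}$, and $g\in F^2$ (obtained, as in the paper, from $V_{(g,\varphi)}1 = g-g(0)$). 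Your observation that the statement as written forces $g(0)=0$ is accurate and agrees with the paper's own reading.
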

\begin{proof}
    In the following, we assume that $g$ is such that $g(0) = 0$ (replacing $g$ by $g - g(0)$ does not change the operator). Recall that $\varphi(z) = a + \lambda z$ with $a, \lambda \in \mathbb C$ and $|\lambda|\leq 1$ (according to Theorem \ref{thm:boundedness compactness Volterra type}). For $w = 0$, we obtain:
    \begin{align*}
        V_{(g, \varphi)} 1(z) = \int_0^z 1 \cdot g'(u)~du = g(z).
    \end{align*}
    Similarly, for arbitrary $w \in \mathbb C$ and $\varphi(u) = a \in \mathbb C$ (i.e., $\lambda = 0$), we obtain
    \begin{align*}
        V_{(g, \varphi)} K_w(z) &= e^{a \cdot \overline{w}} g(z).
    \end{align*}
    Hence, we now assume that $w \neq 0 \neq \lambda$. Then, using integration by parts:
    \begin{align*}
        V_{(g, \varphi)} K_w(z) &= \int_0^z e^{\varphi(u) \cdot \overline{w}} g'(u) ~du\\
        &= e^{a \cdot \overline{w}}\int_0^z e^{\lambda u \cdot \overline{w}} g'(u)~du\\
        &= e^{a \cdot \overline{w}} \left( [e^{\lambda u \cdot \overline{w}} g(u)]_{u=0}^z - \lambda \overline{w}\int_0^z e^{\lambda u \cdot \overline{w}} g(u)~du  \right)\\
        &= K_w(\varphi(z)) g(z) - \lambda \overline{w} V_{(A^{[1]} g, \varphi)} K_w(z).
    \end{align*}
    By induction, one now shows that:
    \begin{align*}
        V_{(g, \varphi)} K_w(z) = K_w(\varphi(z)) \sum_{k=0}^\infty (-\lambda \overline{w})^k A^{[k]} g(z).
    \end{align*}
    Note that this series converges in $F^2$ by the norm estimate for $A^{[k]}$. 
\end{proof}
\begin{corollary}
    Assume that $g, \varphi \in \operatorname{Hol}(\mathbb C)$ such that $V_{(g, \varphi)}$ is bounded. Then, for all $z, w \in \mathbb C$ the following identities are valid:
    \begin{align}
        \widetilde{V_{(g, \varphi)}}(w,z) &= e^{-\frac{|w|^2 + |z|^2}{2} + \varphi(z) \cdot \overline{w}} \sum_{k=0}^\infty (-\lambda \overline{w})^k A^{[k]} g(z),\\
        \widetilde{V_{(g, \varphi)}}(z) &= e^{-|z|^2 + a \overline{z} + \lambda |z|^2} \sum_{k=0}^\infty (-\lambda \overline{z})^k A^{[k]} g(z),\\
        K_{V_{(g,\varphi)}}(w,z) &= e^{\varphi(z) \cdot \overline{w}} \sum_{k=0}^\infty (-\lambda \overline{w})^k A^{[k]} g(z).
    \end{align}
\end{corollary}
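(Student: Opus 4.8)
The plan is to obtain all three identities as direct consequences of the preceding lemma, which already supplies the closed form $V_{(g,\varphi)}K_w(z) = K_w(\varphi(z))\sum_{k=0}^\infty(-\lambda\overline w)^k A^{[k]}g(z)$. No new estimate is needed; the task is purely to rewrite this formula in terms of the three transforms introduced in Section 2 and to simplify the resulting exponential factors using $\varphi(z)=a+\lambda z$.

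I would start with the canonical integral kernel, as it is the most immediate: by its very definition $K_A(w,z)=\langle AK_w,K_z\rangle$, and the reproducing property $f(z)=\langle f,K_z\rangle$ gives $K_{V_{(g,\varphi)}}(w,z)=(V_{(g,\varphi)}K_w)(z)$. Inserting the lemma and using $K_w(\varphi(z))=e^{\varphi(z)\cdot\overline w}$ yields the third identity verbatim. For the bivariate Berezin transform I would recall $k_w=e^{-|w|^2/2}K_w$, so that
$$\widetilde{V_{(g,\varphi)}}(w,z)=\langle V_{(g,\varphi)}k_w,k_z\rangle = e^{-\frac{|w|^2+|z|^2}{2}}\,K_{V_{(g,\varphi)}}(w,z),$$
and the Gaussian prefactor combines with the formula just obtained to give the first identity. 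The diagonal Berezin transform then follows by putting $w=z$ and expanding $\varphi(z)\cdot\overline z=(a+\lambda z)\overline z=a\overline z+\lambda|z|^2$, which produces the exponent $-|z|^2+a\overline z+\lambda|z|^2$ of the second identity.

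Conceptually there is no obstacle; the single point warranting care is the meaning of the infinite series at a fixed point $z$. The preceding lemma asserts convergence of $\sum_{k}(-\lambda\overline w)^k A^{[k]}g$ in the norm of $F^2$, and since point evaluation $f\mapsto f(z)=\langle f,K_z\rangle$ is a bounded functional on $F^2$, this $F^2$-convergence transfers to convergence of the scalar series $\sum_{k}(-\lambda\overline w)^k A^{[k]}g(z)$ to the evaluation of the limit. Hence every rearrangement above is justified and the three displayed formulas follow.
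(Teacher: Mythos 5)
Your proof is correct and follows exactly the route the paper intends: the corollary is an immediate consequence of the preceding lemma combined with the Section~2 identities $K_A(w,z)=(AK_w)(z)$, $\widetilde{A}(w,z)=e^{-\frac{|w|^2+|z|^2}{2}}K_A(w,z)$, and $\widetilde{A}(z)=\widetilde{A}(z,z)$, together with $\varphi(z)\cdot\overline{z}=a\overline{z}+\lambda|z|^2$. Your additional remark that $F^2$-norm convergence of the series transfers to pointwise convergence via the boundedness of point evaluations is a careful touch the paper leaves implicit, and it is exactly the right justification.
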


\subsection{Toeplitz-type operators}\label{sec:toeplitztype}
In \cite{Xu_Yu2023, Xu_Yu2024}, \emph{Toeplitz-type operators} (also called \emph{generalized Toeplitz operators}) were considered. We briefly recall their definition. For $z \in \mathbb C$, we let
\begin{align*}
    U_z f(w) = f(z-w) k_z(w).
\end{align*}
It is not hard to verify that $U_z$ is a bounded, self-adjoint operator on $F^2$. Given $f\in L^\infty(\mathbb C)$, the Toeplitz-type operator $T_f^{(j)}$ (where $j \in \mathbb N_0$) is defined weakly as
\begin{align*}
    \langle T_f^{(j)} g, h\rangle := \frac{1}{\pi} \int_{\mathbb C} f(z) \langle U_z g, e_j\rangle \langle e_j, U_z h\rangle ~dz.
\end{align*}
In \cite{Xu_Yu2023} it was proven that these operators are bounded for every $f \in L^\infty(\mathbb C)$ and compactness of these operators is characterized in terms of the behaviour of their Berezin transform. We want to revisit these results from the perspective of the phase space formalism of \emph{quantum harmonic analysis (QHA)}, which is a very convenient formalism to study Toeplitz operators on the Fock space \cite{Fulsche2020}.

For doing so, we first observe that
\begin{align*}
    U_z f = W_z R f, 
\end{align*}
where $R$ is the parity operator $Rf(z) = f(-z)$. Note that this operator satisfies $W_z R = R W_{-z}$ for all $z \in \mathbb C$. Then, for the term occurring in the definition of $T_f^{(j)}$ we have
\begin{align*}
    \langle U_z g e_j\rangle \langle e_j U_z h\rangle &= \langle (e_j \otimes e_j) W_z R g, W_z R h\rangle\\
    &= \langle W_z R (e_j \otimes e_j) R W_{-z} g, h\rangle.
\end{align*}
In the following, we will use the conventions for the phase space translation $\alpha_z$ and $\beta_-$ taken from \cite{Fulsche2020, werner84}: For $A \in \mathcal L(\mathcal H)$ and $z \in \mathbb C$ we define
\begin{align*}
    \alpha_z(A) := W_z A W_{-z}, \quad \beta_-(A) := RAR.
\end{align*}
We will also use the convolution between suitable operators and functions, which is defined as:
\begin{align*}
    f \ast A := \int_{\mathbb C} f(z) \alpha_z(A)~dz.
\end{align*}
This integral has to be understood in the weak sense. As a matter of fact, this convolution always results in a bounded operator, which is automatically contained in $\mathcal T$, whenever $f$ is bounded and $A$ is trace class (cf.\ \cite[Lemma 2.8, Theorem 3.1]{Fulsche2020}). 
With these notations, the definition of $T_f^{(j)}$ could have been written as:
\begin{align*}
    T_f^{(j)} :&= \frac{1}{\pi} \int_{\mathbb C} f(z) W_z R (e_j \otimes e_j) R W_{-z}~dz\\
    &= \frac{1}{\pi} \int_{\mathbb C} f(z) \alpha_z (\beta_-(e_j \otimes e_j))~dz\\
    &= \frac{1}{\pi} f \ast \beta_-(e_j \otimes e_j).
\end{align*}

Now, we observe that $Re_j = (-1)^{j} e_j$ such that $R(e_j \otimes e_j)R = e_j \otimes e_j$. We arrive at the following fact:
\begin{proposition}
    Let $f \in L^\infty(\mathbb C)$ and $j \in \mathbb N_0$. Then, the following holds true:
    \begin{align*}
        T_f^{(j)} = \frac{1}{\pi} f \ast (e_j \otimes e_j) \in \mathcal T.
    \end{align*}
\end{proposition}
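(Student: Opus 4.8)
The plan is to assemble the identities already established in the paragraphs preceding the statement, so that the proof reduces to a short bookkeeping argument followed by an appeal to a known QHA result. The key is to recognize the scalar factor in the definition of $T_f^{(j)}$ as a single matrix element of a conjugated rank-one operator.

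First I would start from the weak definition and rewrite the integrand $\langle U_z g, e_j\rangle \langle e_j, U_z h\rangle$. Since $(e_j \otimes e_j)\xi = \langle \xi, e_j\rangle e_j$ and $U_z$ is self-adjoint (equivalently, $U_z^\ast = R W_{-z} = W_z R = U_z$ using the commutation $W_z R = R W_{-z}$), this factor equals $\langle U_z (e_j \otimes e_j) U_z g, h\rangle$. Substituting $U_z = W_z R$ and applying $W_z R = R W_{-z}$ to the rightmost factor converts the conjugation into $W_z R (e_j \otimes e_j) R W_{-z} = \alpha_z(\beta_-(e_j \otimes e_j))$, which is exactly the expression recorded above. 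Hence, in the weak sense, $T_f^{(j)} = \frac{1}{\pi} f \ast \beta_-(e_j \otimes e_j)$.

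Next I would simplify $\beta_-(e_j \otimes e_j)$. Because $R e_j = (-1)^j e_j$, the two sign factors cancel and $R(e_j \otimes e_j) R = e_j \otimes e_j$, so $\beta_-(e_j \otimes e_j) = e_j \otimes e_j$. This yields the claimed identity $T_f^{(j)} = \frac{1}{\pi} f \ast (e_j \otimes e_j)$. Membership in $\mathcal T$ is then immediate: the rank-one operator $e_j \otimes e_j$ is trace class and $f \in L^\infty(\mathbb C)$, so by \cite[Lemma 2.8, Theorem 3.1]{Fulsche2020} the convolution $f \ast (e_j \otimes e_j)$ is a well-defined bounded operator lying in the Toeplitz algebra.

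The only step requiring genuine care—and hence the main technical obstacle—is verifying that the scalar-valued weak integral in the original definition coincides with the operator-valued weak integral defining $f \ast A$, i.e.\ that interchanging the integration with the pairing against $g$ and $h$ is legitimate. This is precisely what the cited lemma from \cite{Fulsche2020} guarantees for bounded $f$ and trace-class $A$, so no separate convergence argument is needed beyond noting that $e_j \otimes e_j$ is rank one.
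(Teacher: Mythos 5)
Your proposal is correct and follows essentially the same route as the paper: the paper's derivation (given in the text preceding the proposition) likewise rewrites $U_z = W_z R$, identifies the integrand as $\langle \alpha_z(\beta_-(e_j \otimes e_j))g, h\rangle$, cancels the parity signs via $Re_j = (-1)^j e_j$ to get $\beta_-(e_j \otimes e_j) = e_j \otimes e_j$, and invokes the same result from \cite[Lemma 2.8, Theorem 3.1]{Fulsche2020} for boundedness and membership in $\mathcal T$. Your explicit use of the self-adjointness $U_z^\ast = U_z$ is a harmless cosmetic variation of the paper's direct substitution.
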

As an immediate consequence, we obtain from Theorem \ref{thm:compactness}:
\begin{proposition}
    Let $f \in L^\infty(\mathbb C)$ and $j \in \mathbb N_0$. Then, $T_f^{(j)}$ is compact if and only if $\widetilde{T_f^{(j)}} \in C_0(\mathbb C)$. 
\end{proposition}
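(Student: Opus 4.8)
The plan is to obtain the statement as a direct corollary of the Bauer--Isralowitz compactness characterization in Theorem \ref{thm:compactness}, feeding in the membership result established in the preceding proposition. First I would record that, for every $f \in L^\infty(\mathbb C)$ and $j \in \mathbb N_0$, the operator $T_f^{(j)}$ already belongs to the Toeplitz algebra $\mathcal T$. This is precisely the content of the preceding proposition, which rests on the representation $T_f^{(j)} = \frac{1}{\pi} f \ast (e_j \otimes e_j)$ together with the fact (cf.\ \cite{Fulsche2020}) that convolving a bounded function with a trace class operator yields a bounded operator lying in $\mathcal T$; here $e_j \otimes e_j$ is a rank-one projection, hence trace class, so the hypothesis applies. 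In particular $T_f^{(j)}$ is bounded, so its Berezin transform $\widetilde{T_f^{(j)}}$ is well-defined and the condition $\widetilde{T_f^{(j)}} \in C_0(\mathbb C)$ makes sense.

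With this in hand, I would apply Theorem \ref{thm:compactness} to $A = T_f^{(j)}$. That theorem asserts that a bounded operator $A$ is compact if and only if both $A \in \mathcal T$ and $\widetilde{A} \in C_0(\mathbb C)$ hold. Since the first of these two conditions is automatically satisfied by the previous step, the characterization collapses to the single remaining condition $\widetilde{T_f^{(j)}} \in C_0(\mathbb C)$, and this yields both implications of the claim simultaneously.

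I do not expect any genuine obstacle at this stage: the two substantive ingredients---the compactness characterization of Theorem \ref{thm:compactness} and the membership $T_f^{(j)} \in \mathcal T$ from the preceding proposition---have already been secured, so the present statement is a one-line corollary in which the $\mathcal T$-membership half of the characterization is vacuous. The only point worth a moment's care is that the Berezin transform criterion in Theorem \ref{thm:compactness} is phrased for bounded operators, which is precisely why I would first note boundedness of $T_f^{(j)}$ from the convolution representation before invoking the characterization.
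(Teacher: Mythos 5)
Your proposal is correct and follows exactly the paper's argument: the paper also obtains this statement as an immediate consequence of Theorem \ref{thm:compactness}, using the preceding proposition that $T_f^{(j)} = \frac{1}{\pi} f \ast (e_j \otimes e_j) \in \mathcal T$ (boundedness coming from the convolution of a bounded function with a trace class operator). Your added remark on first securing boundedness so the Berezin-transform criterion applies is a reasonable point of care, but introduces nothing beyond the paper's reasoning.
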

We now make use of the convolution of two bounded operators $A, B$ (one of which has to be trace class), which yields a function on $\mathbb C$:
\begin{align*}
    A \ast B(z) := \tr(A \alpha_z(\beta_-(B))).
\end{align*}
Then, all the notions of convolutions introduced above are associative \cite[Lemma 2.7]{Fulsche2020}. Further, the Berezin transform of an operator $A$ is given by $(e_0 \otimes e_0) \ast A$ \cite[Lemma 2.10]{Fulsche2020}. Therefore, we see that
\begin{align*}
    \widetilde{T_f^{(j)}} = \frac{1}{\pi} f \ast [(e_j \otimes e_j) \ast (e_0 \otimes e_0)].
\end{align*}
Computing
\begin{align*}
    (e_j \otimes e_j) \ast (e_0 \otimes e_0)(z) &= \tr((e_j \otimes e_j) W_z R (e_0 \otimes e_0) R W_{-z})\\
    &= \tr( (e_j \otimes e_j) (k_z \otimes k_z))\\
    &= e^{-|z|^2}|\langle e_j, K_z\rangle|^2\\
    &= e^{-|z|^2} \frac{|z|^{2j}}{j!}\\
    &=: g_j(z),
\end{align*}
we obtain that $\widetilde{T_f^{(j)}} = g_j \ast f$. By continuing this formula analytically, resp.\ anti-analytically, we obtain:
\begin{align*}
    \widetilde{T_f^{(j)}}(w,z) &= \frac{1}{\pi j!}\int_{\mathbb C} f(v) (z-v)\overline{(w-v)}e^{-(z-v)\overline{(w-v)}}~dw,\\
    K_{T_f^{(j)}}(w,z) &= e^{\frac{|z|^2 + |w|^2}{2}}\frac{1}{\pi j!}\int_{\mathbb C} f(v) (z-v)\overline{(w-v)}e^{-(z-v)\overline{(w-v)}}~dw.
\end{align*}
Combining the previous results with Theorem \ref{thm:compactness}, we conclude:
\begin{proposition}
    Let $f \in L^\infty(\mathbb C)$ and $j \in \mathbb N_0$. Then, $T_f^{(j)}$ is compact if and only if $f \ast g_j \in C_0(\mathbb C)$. 
\end{proposition}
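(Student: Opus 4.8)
The plan is to assemble the claim from facts already in hand, with the general compactness criterion of Theorem \ref{thm:compactness} doing the decisive work. First I would note that, by the QHA identity established above, $T_f^{(j)} = \frac{1}{\pi} f \ast (e_j \otimes e_j)$; since $e_j \otimes e_j$ is a rank-one (hence trace-class) operator and $f$ is bounded, this convolution lies in the Toeplitz algebra by \cite[Lemma 2.8, Theorem 3.1]{Fulsche2020}. Thus $T_f^{(j)} \in \mathcal T$ holds for every symbol $f$, irrespective of any further condition on $f$.

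Next I would recall the Berezin-transform computation carried out just above, namely $\widetilde{T_f^{(j)}} = g_j \ast f$, where $g_j(z) = e^{-|z|^2}|z|^{2j}/j!$. As $f$ and $g_j$ are ordinary functions on $\mathbb C \cong \mathbb R^2$, the commutativity of convolution gives $g_j \ast f = f \ast g_j$, which reconciles the expression appearing in the statement with the one already computed.

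Finally, Theorem \ref{thm:compactness} characterizes compactness of a bounded operator as membership in $\mathcal T$ together with the vanishing of its Berezin transform at infinity. Since $\mathcal T$-membership is automatic from the first step, $T_f^{(j)}$ is compact if and only if $\widetilde{T_f^{(j)}} = f \ast g_j$ lies in $C_0(\mathbb C)$, which is precisely the asserted equivalence. Because every computational ingredient has already been supplied, I anticipate no genuine obstacle; the only point meriting attention is confirming that $f \ast (e_j \otimes e_j)$ truly lands in $\mathcal T$, so that the relevant hypothesis of Theorem \ref{thm:compactness} is met rather than merely assumed, and that the interchange $g_j \ast f = f \ast g_j$ is justified by commutativity of convolution.
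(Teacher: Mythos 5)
Your proposal is correct and follows exactly the paper's route: the paper deduces this proposition by combining the previously established facts $T_f^{(j)} = \frac{1}{\pi} f \ast (e_j \otimes e_j) \in \mathcal T$ and $\widetilde{T_f^{(j)}} = g_j \ast f$ with the compactness characterization of Theorem \ref{thm:compactness}. Your two points of caution (that membership in $\mathcal T$ is genuinely supplied by \cite[Lemma 2.8, Theorem 3.1]{Fulsche2020}, and that $g_j \ast f = f \ast g_j$ by commutativity of convolution of an $L^\infty$ function with the integrable kernel $g_j$) are both valid and resolve themselves as you anticipate.
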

Recall the notion of the \emph{Fourier-Weyl transform} of an operator $A$ (say, $A$ is trace class):
\begin{align*}
    \mathcal F_W(A)(\xi) := \tr(AW_{-\xi}), \quad \xi \in \mathbb C.
\end{align*}
Then, $\mathcal F_W$ is an injective map $\mathcal F_W: \mathcal T^1(F^2) \to C_0(\mathbb C)$ (where $\mathcal T^1(F^2)$ denotes the trace class on $F^2$). Recall that the Weyl operators $W_z$ satisfy the CCR relation
\begin{align}\label{eq:ccr}
    W_z W_w = e^{-i\operatorname{Im}(z\cdot \overline{w})}W_{z+w}, \quad z, w \in \mathbb C.
\end{align}
If $(\mathcal H, V_z)$ is any other irreducible projective representation of $\mathbb C$ such that
\begin{align*}
    V_z V_w = e^{-i\operatorname{Im}(z\cdot \overline{w})}V_{z+w}, \quad z, w \in \mathbb C,
\end{align*}
then there exists a unitary map $\mathfrak A: F^2 \to \mathcal H$ (which is unique up to a constant factor) such that $\mathfrak AW_z = V_z \mathfrak A$ for each $z \in \mathbb C$ (this is the theorem of Stone and von Neumann). If $A \in \mathcal T^1(F^2)$ and $B \in \mathcal T^1(\mathcal H)$ satisfy
\begin{align*}
    \mathcal F_W(A)(\xi) = \tr(AW_{-\xi}) = \tr(BV_{-\xi}) =: \mathcal F_W'(B)(\xi)
\end{align*}
for all $\xi \in \mathbb C$, then $A$ and $\mathfrak A^\ast B \mathfrak A$ have the same Fourier-Weyl transform. By injectivity of this map (\cite[Prop.\ 3.4]{werner84} or \cite[Prop.\ 5.17]{Fulsche_Galke2023}), we conclude that $A = \mathfrak A^\ast B \mathfrak A$. All this can be used as follows: First, we observe that
\begin{align*}
    \mathcal F_W(e_j \otimes e_j)(\xi) &= \tr((e_j \otimes e_j)W_{-\xi}) = \tr(e_j \otimes (W_\xi e_j))\\
    &= \langle e_j, W_{\xi} e_j\rangle\\
    &= \frac{1}{j!} \frac{1}{\pi} \int_{\mathbb C} z^j (\overline{z-\xi})^j e^{\xi \overline{z} - \frac{|\xi|^2}{2}} e^{-|z|^2}~dz\\
    &= e^{-\frac{|\xi|^2}{2}}\frac{1}{j!} \frac{1}{\pi} \sum_{k=0}^j \binom{j}{k} (-1)^k \overline{\xi}^k \int_{\mathbb C} z^j \overline{z}^{j-k} e^{\xi \overline{z}} e^{-|z|^2}~dz\\
    &= e^{-\frac{|\xi|^2}{2}}\frac{1}{j!} \frac{1}{\pi} \sum_{k=0}^j \binom{j}{k} (-1)^k \overline{\xi}^k \sum_{m=0}^\infty \int_{\mathbb C} z^j \overline{z}^{j-k}  \frac{(\xi \overline{z})^m}{m!} e^{-|z|^2}~dz\\
    &= e^{-\frac{|\xi|^2}{2}}\frac{1}{j!} \frac{1}{\pi} \sum_{k=0}^j \binom{j}{k} (-1)^k \overline{\xi}^k \sum_{m=0}^\infty  \frac{\xi^m}{m!} \int_{\mathbb C} z^j \overline{z}^{j-k+m}   e^{-|z|^2}~dz
\end{align*}
Recalling that
\begin{align*}
    \frac{1}{\pi} \int_{\mathbb C} z^a \overline{z}^b e^{-|z|^2}~dz = \begin{cases}
        0, \quad a \neq b,\\
        a!, \quad a = b,
    \end{cases}
\end{align*}
we obtain:
\begin{align*}
    \mathcal F_W(e_j \otimes e_j)(\xi) &= e^{-\frac{|\xi|^2}{2}} \sum_{k=0}^j \binom{j}{k} (-1)^k \overline{\xi}^k \frac{\xi^k}{k!}\\
    &= e^{-\frac{|\xi|^2}{2}} L_j^0(|\xi|^2),
\end{align*}
where $L_j^0$ is the $j$-th Laguerre polynomial
\begin{align*}
    L_k^0(x) = \sum_{k=0}^j \binom{j}{k} (-1)^k \frac{x^k}{k!}.
\end{align*}
Now, we briefly make a short detour to polyanalytic Fock spaces, without giving many details. For $j \in \mathbb N$ we denote by $F_j^2$ the subspace of all (smooth) functions $f$ in $L^2(\mathbb C, \mu)$ satisfying
\begin{align*}
    \frac{\partial^j}{\partial \overline{z}^j} f = 0.
\end{align*}
Then, $F_j^2$ is a closed subspace of $L^2(\mathbb C, \mu)$ and $F_1^2 = F^2$. Clearly, $F_j^2 \subset F_{j+1}^2$. We now decompose these spaces as follows:
\begin{align*}
    F_{j+1}^2 = F_j^2 \oplus F_{(j)}^2 = \oplus_{k=1}^{j+1} F_{(k)}^2,
\end{align*}
i.e., $F_{(j)}^2 := F_j^2 \ominus F_{j-1}^2$. The space $F^2_{(j)}, j \in \mathbb N$, is called the \emph{true $j$-th polyanalytic Fock space} on $\mathbb C$, cf.\ \cite{Fulsche_Hagger2023}. We recall that $F_{(j)}^2$ is also a closed subspace of $L^2(\mathbb C, \mu)$ on which the Weyl operators $W_z$ (defined as in Eq.\ \eqref{def:Weyloperator}) act irreducibly. They still satisfy the CCR relation \eqref{eq:ccr} over $F_{(j)}^2$. Hence, for each $j \in \mathbb N$ there exists a unitary operator $\mathfrak A_j: F^2 \to F_{(j)}^2$ such that $W_z \mathfrak A_j = \mathfrak A_j W_z$. By $k_{z, (j)}$, we refer to the normalized reproducing kernel of $F_{(j)}^2$.  Then, in \cite[Proposition 3.2(a)]{Fulsche_Hagger2023} it was shown that:
\begin{align*}
    \mathcal F_W'(k_{0, (j)} \otimes k_{0, (j)})(\xi) = e^{-\frac{|\xi|^2}{2}} L_{j-1}^0(|\xi|^2).
\end{align*}
By the above comment, we therefore conclude:
\begin{align*}
    e_{j-1} \otimes e_{j-1} = \mathfrak A_j^\ast (k_{0, (j)} \otimes k_{0, (j)}) \mathfrak A_j.
\end{align*}
In particular, since $\mathfrak A_j$ intertwines the Weyl operators, we see that:
\begin{align*}
    T_f^{(j)} &= \frac{1}{\pi} f \ast (e_j \otimes e_j) = \frac{1}{\pi} f \ast \mathfrak A_j^\ast (k_{0, (j)} \otimes k_{0, (j)}) \mathfrak A_j\\
    &= \frac{1}{\pi} \mathfrak A_j^\ast (f \ast (k_{0, (j)} \otimes k_{0, (j)})) \mathfrak A_j\\
    &= \mathfrak A_j^\ast T_{f, (j)} \mathfrak A_j,
\end{align*}
where $T_{f, (j)}$ denotes the Toeplitz operator with symbol $f$ on the $j$-th true polyanalytic Fock space (see, e.g., \cite[Eq.\ (3.7)]{Fulsche_Hagger2023} for the last identity). In particular, the Toeplitz-type operators $T_f^{(j)}$ are unitarily equivalent to the true polyanalytic Toeplitz operators $T_{f, (j)}$. Hence, all the properties discussed in \cite[Chapter 3]{Fulsche_Hagger2023} carry over verbatim to Toeplitz-type operators.

\subsection{Hausdorff operators}\label{sec:Hausdorff}
The investigation of Hausdorff operators has a long tradition, which we will not discuss here. The paper \cite{Galanopoulos_Stylogiannis2023} contains a nice historical summary on this topic, to which we refer. Instead, we will focus on discussing Hausdorff operators on $F^2$. 

Given a positive Borel measure $\rho$ on $(0, \infty)$, the Hausdorff operators $H_\rho$ formally acts as
\begin{align}\label{def:Hausdorff_operator}
    H_\rho f(z) = \int_{(0, \infty)} f\left( \frac zt \right) \frac 1t ~d\rho(t).
\end{align}
In \cite{Galanopoulos_Stylogiannis2023}, boundedness and compactness of $H_\rho$ on $F^2$ has been characterized. The main result is the following:
\begin{theorem}[{\cite[Theorems 1.1 and 1.2]{Galanopoulos_Stylogiannis2023}}]\label{thm:Hmu}
    $H_\rho$ acts boundedly on $F^2$ if and only if $\rho((0, 1)) = 0$ and $\int_{[1, \infty)} \frac{1}{t}~d\rho(t) < \infty$. In this case, its norm is given by 
\begin{align}\label{eq:Hausdorff_norm}
    \| H_\rho\| = \int_{[1, \infty)} \frac{1}{t}~d\rho(t).
\end{align}
    If $H_\rho$ is bounded, then it is compact if and only if $\rho(\{ 1\}) = 0$. 
\end{theorem}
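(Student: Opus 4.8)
The plan is to diagonalize $H_\rho$ with respect to the monomial orthonormal basis $\{e_n\}_{n \in \mathbb N_0}$. The crucial observation is that dilation acts diagonally on monomials: since $e_n(z/t) = t^{-n} e_n(z)$, substituting $f = e_n$ into \eqref{def:Hausdorff_operator} yields
\[
    H_\rho e_n = \left( \int_{(0,\infty)} t^{-n-1}~d\rho(t) \right) e_n =: \lambda_n e_n ,
\]
so that every $e_n$ is an eigenvector with nonnegative eigenvalue $\lambda_n$. For a general $f = \sum_n c_n e_n \in F^2$ the same computation, carried out first on the dense subspace of polynomials (where the sum is finite and there is nothing to interchange) and then, once boundedness is available, extended by continuity, shows that $H_\rho$ is the diagonal operator $\sum_n c_n e_n \mapsto \sum_n c_n \lambda_n e_n$. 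All three assertions of the theorem thereby reduce to elementary spectral facts about a diagonal operator with nonnegative diagonal $(\lambda_n)_n$.

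For boundedness and the norm, note that since the $\lambda_n \geq 0$ are real the operator is self-adjoint and $\| H_\rho\| = \sup_n \lambda_n$. If $\rho((0,1)) > 0$, then by continuity of measure from below there is $b < 1$ with $\rho((0,b]) > 0$, whence $\lambda_n \geq b^{-n-1}\rho((0,b]) \to \infty$; thus $\| H_\rho e_n\| = \lambda_n$ is unbounded (and $H_\rho e_n \notin F^2$ as soon as $\lambda_n = \infty$), so $H_\rho$ cannot be bounded. Hence boundedness forces $\rho((0,1)) = 0$, i.e.\ $\rho$ is carried by $[1,\infty)$. For such $\rho$ the integrand $t^{-n-1}$ is nonincreasing in $n$ for every $t \geq 1$, so $(\lambda_n)$ is nonincreasing and $\sup_n \lambda_n = \lambda_0 = \int_{[1,\infty)} t^{-1}~d\rho(t)$. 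This gives both the boundedness criterion and the norm formula \eqref{eq:Hausdorff_norm}.

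For compactness, recall that a diagonal operator is compact precisely when its eigenvalues tend to zero. Assuming $H_\rho$ bounded (so $\rho$ is supported on $[1,\infty)$ and $t^{-1} \in L^1(\rho)$), the dominating function $t^{-1}$ permits dominated convergence: as $n \to \infty$ the integrand $t^{-n-1}$ converges pointwise to the indicator of the single point $t = 1$, so
\[
    \lambda_n = \int_{[1,\infty)} t^{-n-1}~d\rho(t) \longrightarrow \rho(\{1\}).
\]
Therefore $\lambda_n \to 0$ if and only if $\rho(\{1\}) = 0$, which is exactly the stated compactness criterion.

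The one genuinely delicate point, and the step I expect to require the most care, is the passage from the formal definition \eqref{def:Hausdorff_operator} to the rigorous diagonal action on all of $F^2$. This is handled cleanly by observing that for $t \geq 1$ the dilation $D_t f(z) := f(z/t)$ is a contraction on $F^2$ (its eigenvalues $t^{-n}$ lie in $(0,1]$); hence, when $\rho$ is supported on $[1,\infty)$ with $\int_{[1,\infty)} t^{-1}~d\rho < \infty$, the defining integral $H_\rho f = \int_{[1,\infty)} D_t f \cdot t^{-1}~d\rho(t)$ converges as an absolutely convergent $F^2$-valued Bochner integral, with $\| H_\rho f\| \leq \| f\| \int_{[1,\infty)} t^{-1}~d\rho(t)$. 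This simultaneously legitimizes the interchange of integration with the series expansion and supplies the upper norm bound, while testing on $e_0$ furnishes the matching lower bound. In the unbounded cases no such extension can exist, and this is witnessed directly by $\| H_\rho e_n\| = \lambda_n \to \infty$. Once this convergence and extension bookkeeping is in place, the spectral conclusions above are immediate.
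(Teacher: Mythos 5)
Your proof is correct, and it takes a partly different route from the paper's. The sufficiency/norm part is in the same spirit as the paper (the paper also lets $H_\rho$ act on the monomials, obtains $H_\rho e_n = \lambda_n e_n$ with $\lambda_n = \int_{[1,\infty)} t^{-n-1}\,d\rho(t)$, and applies this to Taylor series), but you are more careful about the interchange: your observation that $D_t$ is a contraction for $t \geq 1$ and that $H_\rho f = \int_{[1,\infty)} D_t f \cdot t^{-1}\,d\rho(t)$ converges as a Bochner integral is a clean way to legitimize the diagonalization that the paper passes over tersely. For the necessity direction your argument differs mildly: the paper restricts $\rho$ to a compact $[a,b] \subset (0,1)$ and shows the \emph{Berezin transform} of $H_\nu$ grows like $e^{|z|^2(1/b - 1)}$, while you read off eigenvalue blow-up $\lambda_n \geq b^{-n-1}\rho((0,b]) \to \infty$ directly; both work (one small slip: in your closing sentence, when $\rho((0,1)) = 0$ but $\int_{[1,\infty)} t^{-1}\,d\rho = \infty$, unboundedness is \emph{not} witnessed by $\lambda_n \to \infty$ — indeed $\lambda_n$ can tend to $0$, e.g.\ for Lebesgue measure on $[1,\infty)$ — but by $\lambda_0 = \infty$, i.e.\ divergence of $H_\rho 1$, which your earlier parenthetical does cover). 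The genuinely different part is compactness: the paper deliberately routes this through quantum harmonic analysis — it first proves $H_\rho \in \mathcal T$ (via the Wiener-type criterion of Proposition \ref{crit:Wiener_algebra} for compactly supported $\rho$, then a norm-limit argument), and then invokes Theorem \ref{thm:compactness} together with $\lim_{|z|\to\infty} \widetilde{H_\rho}(z) = \rho(\{1\})$ — whereas you use the spectral fact that a diagonal operator is compact iff $\lambda_n \to 0$, with dominated convergence giving $\lambda_n \to \rho(\{1\})$. Your route is exactly the one the paper attributes to \cite{Galanopoulos_Stylogiannis2023} and mentions only in passing after Proposition \ref{prop:bounded_HO_T_algebra}. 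What each buys: your argument is more elementary and self-contained and yields the full spectrum of $H_\rho$ for free; the paper's argument additionally establishes the stronger conclusion $H_\rho \in \mathcal T$ for every bounded $H_\rho$, which your spectral computation does not give directly (it can be recovered from the slowly oscillating eigenvalue sequence via \cite{Esmeral_Maximenko2016}, as the paper notes).
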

We note that \cite{Galanopoulos_Stylogiannis2023} also proved that $F^2$ boundedness and compactness is equivalent to boundedness and compactness of $H_\rho$ on every $F^p$, $1 \leq p \leq \infty$ (see \cite{Bauer_Fulsche2020, Bauer_Isralowitz2012, Zhu2012} for definition of the $p$-Fock spaces). 

In \cite{Galanopoulos_Stylogiannis2023}, the properties of $H_\rho$ were discussed in terms of a certain multiplier problem. We will give a quick discussion on these operators from the perspective of \emph{localized operators}. Since we will make use of the norm equality \eqref{eq:Hausdorff_norm}, we first give a short proof of the boundedness characterization in the Hilbert space case for completeness. We refer here also to \cite{Galanopoulos_Stylogiannis2023}, where essentially the same proof is already presented.

\begin{proof}[Proof of the boundedness characterization in Theorem \ref{thm:Hmu}]
    We will first show that $H_\rho$ cannot be bounded whenever $\rho((0, 1)) > 0$. Under this assumption, since $\rho$ is a Borel measure, there exists a compact interval $[a, b] \subset (0, 1)$ such that $\rho([a, b]) > 0$. Let $\nu = \rho \cdot \mathbf 1_{[a, b]}$. By monotonicity, it suffices to prove that $H_\nu$ is unbounded. For doing so, we will show that the Berezin transform of $H_\nu$ is unbounded. 

    Note that the Berezin transform of $H_\nu$ is given by:
    \begin{align*}
        \widetilde{H_\nu}(z) &= \langle H_\nu k_z, k_z\rangle = e^{-|z|^2} \langle H_\nu K_z, K_z\rangle\\
        &= e^{-|z|^2} (H_\nu K_z)(z)\\
        &= e^{-|z|^2} \int_{[a,b]} e^{\frac{|z|^2}{t}} \frac{1}{t}~d\rho(t)\\
        &\geq e^{-|z|^2} \frac{1}{b} \int_{[a, b]}e^{\frac{|z|^2}{b}}~\frac{1}{t}~d\rho(t)\\
        &= e^{|z|^2(\frac{1}{b} - 1)} \frac{1}{b} \int_{[a,b]} \frac{1}{t}~d\rho(t).
    \end{align*}
    Since $\rho$ is a Borel measure on $(0, \infty)$, the last integral is finite, hence this expression is well-defined. Nevertheless, since $\frac{1}{b} > 1$, this function is unbounded. Hence, $H_\nu$ cannot be a bounded operator.

    We now assume that $\rho$ is such that $\rho((0, 1)) = 0$. Then, for $H_\rho$ being bounded, we necessarily need to have that $H_\rho 1(z)$ exists for every $z \in \mathbb C$. But this function is given by
    \begin{align}\label{Hmu_one}
        H_\rho 1(z) = \int_{[1, \infty)} \frac{1}{t} ~d\rho(t),
    \end{align}
    hence the integral needs to be bounded. 

    For sufficiency of the criterion, note that for the standard orthonormal basis $e_n(z) = \frac{z^n}{\sqrt{n!}}$, $n \in \mathbb N_0$, we have:
    \begin{align*}
        H_\rho e_n(z) &= \int_{[1, \infty)} \frac{z^n}{\sqrt{n!} t^n} \frac{1}{t}~d\rho(t)\\
        &= e_n(z) \int_{[1, \infty)} \frac{1}{t^{n+1}}~d\rho(t)\\
        &\leq e_n(z) \int_{[1, \infty)} \frac{1}{t}~d\rho(t).
    \end{align*}
    Applying this estimate to the Taylor series of an arbitrary $f \in F^2$ shows that $H_\rho$ is bounded with $\| H_\rho\| \leq \int_{[1, \infty)} \frac{1}{t}~d\rho(t)$. Comparing with \eqref{Hmu_one} shows equality of the norm and the integral.
\end{proof}
Given the characterization of boundedness for $H_\rho$, we will now improve that result: Whenever $H_\rho$ is bounded, it is automatically contained in $\mathcal T$. To prove this, we will make use of Proposition \ref{crit:Wiener_algebra}.
\begin{lemma}
    Assume that $\rho$ is such that $\rho((0, 1)) = 0$ and $\operatorname{supp}(\rho)$ is compact. Then, there exists $H \in L^1(\mathbb C)$ such that 
    \begin{align*}
        |\langle H_\rho k_z, k_w\rangle| \leq H(z-w).
    \end{align*}
    In particular, $H_\rho \in \mathcal T$.
\end{lemma}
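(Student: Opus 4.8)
The plan is to produce the dominating function $H$ by computing the bivariate Berezin transform $\langle H_\rho k_z, k_w\rangle$ in closed form and then invoking Proposition~\ref{crit:Wiener_algebra}. First I would apply $H_\rho$ to the reproducing kernel: since $k_z(u) = e^{u\overline z - |z|^2/2}$, definition~\eqref{def:Hausdorff_operator} gives $H_\rho k_z(u) = e^{-|z|^2/2}\int_{[1,\infty)} e^{u\overline z/t}\,\tfrac{1}{t}\,d\rho(t)$, where the lower limit is $1$ precisely because $\rho((0,1))=0$. Pairing against $k_w$ and interchanging the two integrations (Fubini is justified since $\operatorname{supp}\rho$ is compact and the integrand is jointly continuous), the inner integral over $\mathbb C$ is evaluated by the reproducing identity $\frac{1}{\pi}\int_{\mathbb C} e^{u\overline a}e^{\overline u b}\,e^{-|u|^2}\,du = e^{b\overline a}$ with $a = z/t$, $b = w$. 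This yields
\[
\langle H_\rho k_z, k_w\rangle = e^{-\frac{|z|^2+|w|^2}{2}}\int_{[1,\infty)} \frac{1}{t}\, e^{w\overline z/t}\,d\rho(t).
\]

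Next I would estimate the modulus. Since $t$ is real, $|e^{w\overline z/t}| = e^{\operatorname{Re}(w\overline z)/t}$, and substituting the polarization identity $\operatorname{Re}(w\overline z) = \tfrac12(|z|^2+|w|^2-|z-w|^2)$ the exponent of the integrand becomes
\[
\frac{|z|^2+|w|^2}{2}\Bigl(\frac{1}{t}-1\Bigr) - \frac{|z-w|^2}{2t}.
\]
The decisive observation is that throughout the range of integration $t\geq 1$, so $\tfrac1t - 1 \leq 0$ and the first (``diagonal'') term is nonpositive; discarding it leaves the upper bound $-|z-w|^2/(2t)$. Writing $\operatorname{supp}\rho \subset [1,b]$ and using $t\leq b$ to replace $t$ by $b$ in the Gaussian, I obtain
\[
|\langle H_\rho k_z, k_w\rangle| \leq e^{-\frac{|z-w|^2}{2b}} \int_{[1,b]} \frac{1}{t}\,d\rho(t).
\]

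Setting $H(\zeta) := \bigl(\int_{[1,b]} \tfrac1t\,d\rho(t)\bigr)\,e^{-|\zeta|^2/(2b)}$ then gives $|\langle H_\rho k_z, k_w\rangle| \leq H(z-w)$ with $H\in L^1(\mathbb C)$, since $H$ is Gaussian and the prefactor is finite (indeed it equals $\|H_\rho\|$ by Theorem~\ref{thm:Hmu}); Proposition~\ref{crit:Wiener_algebra} then immediately yields $H_\rho\in\mathcal T$. The only genuinely delicate point is the sign analysis of the diagonal term, and this is exactly where the hypothesis $\rho((0,1))=0$ is used — it \emph{must} be used, since the preceding boundedness proof shows that any mass in $(0,1)$ already makes $H_\rho$ unbounded via its unbounded Berezin transform. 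The remaining ingredients — the Fubini interchange and the elementary Gaussian integral — are routine.
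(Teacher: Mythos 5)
Your proof is correct and takes essentially the same approach as the paper: both estimate the bivariate Berezin transform by a Gaussian in $z-w$ whose width is governed by the supremum of $\operatorname{supp}(\rho)$, and then invoke Proposition \ref{crit:Wiener_algebra}. The only (cosmetic) difference is that by applying the polarization identity $\operatorname{Re}(w\overline{z})=\tfrac{1}{2}\left(|z|^2+|w|^2-|z-w|^2\right)$ pointwise in $t$ and discarding the nonpositive diagonal term, you avoid the paper's case split on the sign of $\operatorname{Re}(w\overline{z})$ and reach the same bound $e^{-|z-w|^2/(2b)}\int_{[1,b]}\frac{1}{t}\,d\rho(t)$ slightly more directly.
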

\begin{proof}
    In the following proof we set $M_\rho := \int_{[1, \infty)} \frac{1}{t}~d\rho(t)$. For the bivariate Berezin transform, we have:
    \begin{align*}
        |\langle H_\rho k_z, k_w\rangle| &= e^{-\frac{|z|^2 + |w|^2}{2}} |H_\rho K_z(w)|\\
        &\leq e^{-\frac{|z|^2 + |w|^2}{2}} \int_{[1, \infty)} e^{\operatorname{Re}(\frac{w \overline{z}}{t})} \frac{1}{t}~d\rho(t).
    \end{align*}
    When $\operatorname{Re}(w \overline{z}) \geq 0$, then we clearly have:
    \begin{align*}
        |\langle H_\rho k_z, k_w\rangle| \leq e^{-\frac{|z|^2 + |w|^2}{2}} \int_{[1, \infty)} e^{\operatorname{Re}(w \overline{z})} \frac{1}{t}~d\rho(t) = e^{-\frac{|z-w|^2}{2}} M_\rho.
    \end{align*}
    For those $z, w$ such that $\operatorname{Re}(w \overline{z}) < 0$, we continue as follows: Assume that $\rho$ is supported in $[1, c]$ for some $c > 1$. Then,
    \begin{align*}
        |\langle H_\rho k_z, k_w\rangle| &\leq e^{-\frac{|z|^2 + |w|^2}{2}} \int_{[1, c]} e^{\operatorname{Re}(\frac{w \overline{z}}{t})} \frac{1}{t}~d\rho(t)\\
        &\leq e^{-\frac{|z|^2 + |w|^2}{2}} \int_{[1, c]} e^{\operatorname{Re}(\frac{w \overline{z}}{c})} \frac{1}{t}~d\rho(t)\\
        &= e^{\frac{|z|^2 + |w|^2}{2}(\frac{1}{c} - 1)} e^{-\frac{|z-w|^2}{2c}} M_\rho
    \end{align*}
    Since $\frac 1c - 1 < 0$, the first exponential is bounded by $1$. Hence, we see that for $\rho$ supported in $[1, c]$ we have that:
    \begin{align*}
        |\langle H_\rho k_z, k_w\rangle| \leq e^{-\frac{|z-w|^2}{2c}} M_\rho
    \end{align*}
    for all $z, w \in \mathbb C$. By Proposition \ref{crit:Wiener_algebra}, it follows that $H_\rho \in \mathcal T$.
\end{proof}
\begin{proposition}\label{prop:bounded_HO_T_algebra}
    Let $\rho$ be a positive Borel measure on $(0, \infty)$. Then, the following are equivalent:
    \begin{enumerate}
        \item $\rho((0, 1)) = 0$ and $\int_{[1, \infty)} \frac{1}{t}~d\rho(t) < \infty$;
        \item $H_\rho$ is bounded on $F^2$;
        \item $H_\rho \in \mathcal T$.
    \end{enumerate}
\end{proposition}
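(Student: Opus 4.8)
The plan is to close the cycle of implications, observing that most of the work is already in place. The equivalence $(1) \Leftrightarrow (2)$ is exactly the boundedness characterization of Theorem \ref{thm:Hmu}, which has been proved above, and the implication $(3) \Rightarrow (2)$ is immediate since $\mathcal T \subset \mathcal L(F^2)$. Thus the only substantive content is $(2) \Rightarrow (3)$ (equivalently $(1) \Rightarrow (3)$): a bounded Hausdorff operator automatically lies in the Toeplitz algebra. The strategy is to reduce the general case to the compactly supported case settled by the preceding Lemma, by approximating $H_\rho$ in operator norm by truncated Hausdorff operators.

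Concretely, I would fix $c > 1$ and split the measure as $\rho = \rho_c + \sigma_c$, where $\rho_c := \rho|_{[1,c]}$ and $\sigma_c := \rho|_{(c,\infty)}$. Since $H_\rho$ depends linearly on $\rho$ by the defining formula \eqref{def:Hausdorff_operator}, this gives $H_\rho = H_{\rho_c} + H_{\sigma_c}$. The truncation $\rho_c$ satisfies $\rho_c((0,1)) = 0$ and has support contained in the compact interval $[1,c]$, so the Lemma directly yields $H_{\rho_c} \in \mathcal T$. It remains to control the tail $H_{\sigma_c}$. Because $\sigma_c((0,1)) = 0$ and $\int_{[1,\infty)} \frac 1t\, d\sigma_c(t) = \int_{(c,\infty)} \frac 1t\, d\rho(t) < \infty$ under assumption (1), Theorem \ref{thm:Hmu} applies to $\sigma_c$ as well, and the norm identity \eqref{eq:Hausdorff_norm} gives
\[
    \| H_\rho - H_{\rho_c}\| = \| H_{\sigma_c}\| = \int_{(c,\infty)} \frac 1t\, d\rho(t).
\]
This is the tail of a convergent integral, hence tends to $0$ as $c \to \infty$. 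Therefore $H_\rho$ is an operator-norm limit of the operators $H_{\rho_c} \in \mathcal T$, and since $\mathcal T$ is a norm-closed $C^\ast$-algebra, we conclude $H_\rho \in \mathcal T$.

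The argument has no deep obstacle once the Lemma and the explicit norm formula are in hand: the truncation linearizes the problem, the Lemma supplies membership in $\mathcal T$ for each truncated piece, and the identity \eqref{eq:Hausdorff_norm} provides exactly the quantitative control on the approximation error needed for convergence. The one point requiring a moment's care is that the norm formula must be invoked for the tail measure $\sigma_c$ and not merely for $\rho$; but this is legitimate precisely because $\sigma_c$ again satisfies the hypotheses of Theorem \ref{thm:Hmu}, so no separate verification is needed.
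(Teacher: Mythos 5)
Your proposal is correct and follows essentially the same route as the paper: truncate $\rho$ to a compact interval, apply the preceding Lemma to get membership of the truncated operator in $\mathcal T$, use the norm formula \eqref{eq:Hausdorff_norm} on the tail measure to show the approximation error vanishes, and conclude by norm-closedness of $\mathcal T$. The only cosmetic difference is that the paper truncates at $[1,n)$ with integer $n$ while you use $[1,c]$ with real $c$; the substance is identical.
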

\begin{proof}
    Of course, we only need to prove the implication (2) $\Rightarrow$ (3). By the previous result, we know that $H_\rho \in \mathcal T$ whenever $\rho$ is compactly supported in $[1, \infty)$. Hence, we know that for $\rho_n = \rho \mathbf 1_{[1, n)}$ we have that $H_{\rho_n} \in \mathcal T$ for every $n \in \mathbb N$. Further, 
    \begin{align*}
        \| H_\rho - H_{\rho_n}\| = \| H_{\rho \mathbf 1_{[n, \infty)}}\| = \int_{[n, \infty)} \frac{1}{t}~d\rho(t).
    \end{align*}
    Since the integral $\int_{[1, \infty)} \frac{1}{t}~d\rho(t)$ is finite, we know that $\int_{[n, \infty)} \frac{1}{t}~d\rho(t) \to 0$ as $n \to \infty$. Hence, $H_\rho \in \mathcal T$. 
\end{proof}
Knowing that $H_\rho$ is always contained in $\mathcal T$ whenever it is bounded, it is now an easy task to give a new proof for the compactness result, using Theorem \ref{thm:compactness}:

\begin{proof}[Proof of the compactness characterization in Theorem \ref{thm:Hmu}]
Indeed, we only have to calculate the limit $\lim_{z \to \infty} \widetilde{H_\rho}(z)$. Writing
\begin{align*}
    \widetilde{H_\rho}(z) = \rho(\{1\}) \cdot 1 + \int_{(1, \infty)} e^{\left( \frac{1}{t} - 1 \right)|z|^2}\frac{1}{t}~d\rho(t), 
\end{align*}
we see that (using the dominated convergence theorem with dominating function $\frac{1}{t}$) 
\begin{align*}
    \lim_{|z| \to \infty} \int_{(1, \infty)} e^{\left( \frac{1}{t} - 1 \right)|z|^2}\frac{1}{t}~d\rho(t) = 0,
\end{align*}
showing that
\begin{align*}
    \lim_{z \to \infty} \widetilde{H_\rho}(z) = \rho(\{ 1\}).
\end{align*}
This yields the compactness characterization for these operators. 
\end{proof}

Defining for $\theta \in S^1$ the operator $D_\theta$ as $D_\theta f(z) = f(\theta z)$, one finds that $D_\theta H_\rho = H_\rho D_\theta$, i.e., $H_\rho$ is a radial operator. The spectral decomposition of every radial operator on the Fock space is given by the eigenvalue sequence with respect to the standard orthonormal basis $e_n(z) = z^n/\sqrt{n!}$. In our case:
\begin{align*}
    a_n &= \frac{1}{(n!)^2}\langle H_\rho z^n, z^n\rangle = \frac{1}{n!} \frac{1}{\pi}\int_{\mathbb C} \int_{[1, \infty)} \frac{z^n}{t^n} \frac{1}{t}~d\rho(t)~\overline{z}^n e^{-|z|^2}~dz\\
    &= \frac{1}{n!} \frac{1}{\pi} \int_{\mathbb C} |z|^{2n} e^{-|z|^2}~dz \int_{[1, \infty)} \frac{1}{t^{n+1}}~d\rho(t)\\
    &= \int_{[1, \infty)} \frac{1}{t^{n+1}}~d\rho(t).
\end{align*}
It is not hard to verify that this sequence converges (towards $\rho(\{ 1\})$, which can be utilized to prove the compactness characterization as done in \cite{Galanopoulos_Stylogiannis2023}). Therefore, it is slowly oscillating with respect to the square root metric
\begin{align*}
    d(m,n) = |\sqrt{m} - \sqrt{n}|, \quad m, n \in \mathbb N_0. 
\end{align*}
Hence, the main result of \cite{Esmeral_Maximenko2016} can also be utilized to show that $H_\rho \in \mathcal T$. 

We now want to discuss the following question: When is the operator $H_\rho$ a Toeplitz operator, i.e., when does there exist a (necessarily radial) $f \in L^\infty(\mathbb C)$ such that $H_\rho = T_f$? To this end, we recall that for a Toeplitz operator with radial symbol $f$ (i.e., $f(z) = g(|z|)$ for some $g \in L^\infty([0, \infty))$) its eigenvalue sequence is given by
\begin{align}\label{egv_sequence_radial_Toeplitz}
    a_n = \frac{1}{n!} \int_0^\infty g(\sqrt{s}) s^n e^{-s}~ds,
\end{align}
cf.\ \cite{Grudsky_Vasilevski2002}. Hence, given any $\rho$ satisfying the conditions for boundedness of $H_\rho$, the task consists in finding $g \in L^\infty([0, \infty))$ such that for all $n \in \mathbb N_0$:
\begin{align*}
    \int_{[1, \infty)} \frac{1}{t^{n+1}}~d\rho(t) = \frac{1}{n!} \int_0^\infty g(\sqrt{s})s^ne^{-s}~ds.
\end{align*}
To the best of our knowledge, a general characterization of those sequences $(a_n) \in \ell^\infty(\mathbb N_0)$ solving the version of the Stieltjes moment problem \eqref{egv_sequence_radial_Toeplitz} is not yet available.

First note that for $\rho = \delta_1$, the operator is $H_\rho$ is simply the identity, i.e., $H_{\delta_1} = T_1$. Hence, the problem of solving $H_\rho = T_f$ for $f$ can be reduced to the case of compact operators $H_\rho$. Further, by the identities
\begin{align*}
    \int_{[1, \infty)} \frac{1}{t^{n+1}}~d\delta_x(t) = \frac{1}{x^{n+1}} =  \int_0^\infty s^{n}e^{-xs}~dx = \int_0^\infty e^{-(x-1)s} s^n e^{-s}~ds,
\end{align*}
we see that for every $x \geq 1$ it is $H_{\delta_x} = T_{f_x}$ with $f_x(z) = e^{-(x-1)|z|^2}$. This solves the question entirely for discrete measures. We are left with the following:
\begin{question}
Is there a characterization of those continuous measures $\rho$ such that $H_\rho$ is a Toeplitz operator?
\end{question}

We want to end this discussion on Hausdorff operators by noting that it is indeed not necessary to consider only real measures $\rho$ in the definition of the Hausdorff operator. Indeed, for example making use of the Hahn-Jordan decomposition for a complex measure $\rho$ and denoting the the total variation measure of $\rho$ by $|\rho|$, one immediately obtains:
\begin{proposition}
    Let $\rho$ be a complex measure on $[1, \infty)$ such that $\int_{[1, \infty)} \frac{1}{t}~d|\rho|(t)< \infty$, then the operator $H_\rho$ defined by \eqref{def:Hausdorff_operator} is bounded with $\| H_\rho\| \leq \int_{[1, \infty)}\frac{1}{t}~d|\rho|(t)$. In that case, it holds true that $H_\rho \in \mathcal T$. $H_\rho$ is compact if and only if $|\rho|(\{ 1\}) = 0$. 
\end{proposition}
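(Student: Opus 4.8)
The plan is to reduce everything to the positive-measure results already in hand, exploiting that for a complex measure $\rho$ with $\int_{[1,\infty)} t^{-1}\, d|\rho|(t) < \infty$ the operator $H_\rho$ is again \emph{diagonal} with respect to the standard basis. Indeed, repeating verbatim the eigenvalue computation carried out just before this statement, one gets $H_\rho e_n = a_n e_n$ with $a_n = \int_{[1,\infty)} t^{-(n+1)}\, d\rho(t)$, the only change being that $\rho$ is now complex; the integrals converge since $t^{-(n+1)} \leq t^{-1} \in L^1(|\rho|)$ for $t \geq 1$. The other structural fact I would record at the outset is that $\rho \mapsto H_\rho$ is linear.

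For boundedness and the sharp norm bound I would argue directly from the diagonal form, using that $\|H_\rho\| = \sup_n |a_n|$. Since $t \geq 1$ forces $t^{-(n+1)} \leq t^{-1}$, one has $|a_n| \leq \int_{[1,\infty)} t^{-(n+1)}\, d|\rho|(t) \leq \int_{[1,\infty)} t^{-1}\, d|\rho|(t)$, which is exactly the asserted estimate. It is worth flagging that applying the triangle inequality to a Hahn--Jordan splitting would only produce the weaker constant $\sum_j \int t^{-1}\, d\rho_j$; it is the diagonal computation that yields the sharp bound.

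For membership in $\mathcal T$ I would now invoke the Hahn--Jordan decomposition, writing $\rho = (\rho_1 - \rho_2) + i(\rho_3 - \rho_4)$ with each $\rho_j$ a positive measure dominated by $|\rho|$. Then each $\rho_j$ is supported in $[1,\infty)$ and satisfies $\int_{[1,\infty)} t^{-1}\, d\rho_j \leq \int_{[1,\infty)} t^{-1}\, d|\rho| < \infty$, so Proposition \ref{prop:bounded_HO_T_algebra} gives $H_{\rho_j} \in \mathcal T$ for each $j$. By linearity of $\rho \mapsto H_\rho$ and the fact that $\mathcal T$ is a linear subspace, $H_\rho = H_{\rho_1} - H_{\rho_2} + i(H_{\rho_3} - H_{\rho_4}) \in \mathcal T$.

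Finally, for compactness I would use that a diagonal operator is compact precisely when its eigenvalue sequence tends to zero. Splitting $a_n = \rho(\{1\}) + \int_{(1,\infty)} t^{-(n+1)}\, d\rho(t)$ and applying dominated convergence with dominating function $t^{-1} \in L^1(|\rho|)$ shows $a_n \to \rho(\{1\})$, so $H_\rho$ is compact if and only if $\rho(\{1\}) = 0$. The one point needing care, and the mild obstacle of the proof, is reconciling this with the stated condition $|\rho|(\{1\}) = 0$: since a singleton admits no nontrivial partition, the total variation of a complex measure at a point equals $|\rho|(\{1\}) = |\rho(\{1\})|$, whence $\rho(\{1\}) = 0 \iff |\rho|(\{1\}) = 0$, completing the argument.
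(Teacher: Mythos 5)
Your proof is correct, and it is in fact more careful than the paper's, which disposes of this proposition in one line (``making use of the Hahn--Jordan decomposition \dots one immediately obtains'') as a reduction to Theorem \ref{thm:Hmu} and Proposition \ref{prop:bounded_HO_T_algebra}. You use that reduction exactly where it works without loss, namely for membership in $\mathcal T$: the four Jordan parts $\rho_j$ are positive measures on $[1,\infty)$ dominated by $|\rho|$, Proposition \ref{prop:bounded_HO_T_algebra} applies to each, and $\mathcal T$ is a linear space. You rightly flag that the reduction is lossy elsewhere: the triangle inequality over the four parts only yields $\|H_\rho\| \leq \int t^{-1}\, d(|\re \rho| + |\im \rho|)$, which can exceed the stated bound $\int t^{-1}\, d|\rho|$ (by a factor up to $\sqrt{2}$), and compactness of the sum $H_\rho$ does not pass to the individual summands $H_{\rho_j}$, so the ``only if'' direction of the compactness claim does not follow from decomposition alone. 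Your substitute --- the diagonal representation $H_\rho e_n = a_n e_n$ with $a_n = \int_{[1,\infty)} t^{-(n+1)}\,d\rho(t)$, the estimate $|a_n| \leq \int t^{-1}\,d|\rho|$, dominated convergence giving $a_n \to \rho(\{1\})$, and the singleton identity $|\rho|(\{1\}) = |\rho(\{1\})|$ --- repairs both points; it transports to complex measures the eigenvalue computation the paper itself performs in its radial-operator discussion. (The paper's implicit alternative for these two points would be to redo the positive-case arguments with $|\int g\,d\rho| \leq \int |g|\,d|\rho|$, and to combine the Berezin transform limit $\widetilde{H_\rho}(z) \to \rho(\{1\})$ with Theorem \ref{thm:compactness}, which is available precisely because $H_\rho \in \mathcal T$ has already been established; your spectral route is equally valid and self-contained.) The only step worth spelling out is that the defining integral, applied to the Taylor expansion of a general $f \in F^2$, agrees with the diagonal extension --- a Fubini application with respect to $|\rho|$, at the same level of rigor the paper uses in the positive case. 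As a side remark, since a complex measure by definition has finite total variation, the hypothesis $\int_{[1,\infty)} t^{-1}\,d|\rho|(t) < \infty$ is automatic here.
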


\bibliographystyle{abbrv}
\bibliography{main}

\vspace{0.5cm}
\begin{multicols}{2}
\noindent Wolfram Bauer\\
\href{bauer@math.uni-hannover.de}{\Letter ~bauer@math.uni-hannover.de}\\
\noindent Robert Fulsche\\
\href{fulsche@math.uni-hannover.de}{\Letter ~fulsche@math.uni-hannover.de}\\
\noindent Miguel Angel Rodriguez Rodriguez\\
\href{rodriguez@math.uni-hannover.de}{\Letter ~rodriguez@math.uni-hannover.de}
\\
All authors:\\
Institut f\"{u}r Analysis\\
Leibniz Universit\"at Hannover\\
Welfengarten 1\\
30167 Hannover\\
GERMANY

\end{multicols}

\end{document}